%% file: mainnew.tex
\documentclass[a4paper]{amsart}
\usepackage[latin1]{inputenc}
\usepackage{amssymb}
\usepackage{amsmath}
\usepackage{mathrsfs}
\usepackage{eufrak}
\usepackage{amsthm}
\usepackage{thmtools}
\usepackage{amsfonts}
\usepackage{textcomp}
\usepackage{graphicx}
\usepackage[pdftex]{color}
\usepackage[dvipsnames]{xcolor}
\usepackage{paralist}
\usepackage[shortlabels]{enumitem}
\usepackage{hyperref}
\hypersetup{hypertexnames=false}
\usepackage{comment}
\usepackage{tikz}
\usepackage{tikz-cd}
\usepackage[arrow, matrix, curve]{xy}
\usepackage{quiver}
\usepackage{soul}
\usepackage{cleveref}
\usetikzlibrary{decorations.pathreplacing,calligraphy, calc, arrows.meta, shapes.misc, decorations.pathmorphing}

\newtheorem*{corollary*}{Corollary}
\newtheorem{theorem}{Theorem}[section]
\newtheorem*{theorem*}{Theorem}

\newtheorem{corollary}[theorem]{Corollary}
\newtheorem{Question M}[theorem]{Question M}
\newtheorem{Conjecture S}[theorem]{Conjecture S}
\newtheorem{Conjecture K}[theorem]{Conjecture K}
\newtheorem{lemma}[theorem]{Lemma}

\newtheorem{proposition}[theorem]{Proposition}
\newtheorem{problem}[theorem]{Problem}

\newtheorem*{claim*}{Claim}
\newtheorem*{conjecture}{Conjecture}

\theoremstyle{definition}
\newtheorem{definition}[theorem]{Definition}

\newtheorem*{theorem }{Theorem}
\newtheorem{remark}[theorem]{Remark}
\newtheorem{example}[theorem]{Example}

\theoremstyle{remark}

\numberwithin{equation}{theorem}

\makeatletter
\renewcommand*\env@matrix[1][\
arraystretch]{%
  \edef\arraystretch{#1}%
  \hskip -\arraycolsep
  \let\@ifnextchar\new@ifnextchar
  \array{*\c@MaxMatrixCols c}}
\makeatother

\renewcommand{\mod}{\operatorname{mod}}

\newcommand{\Ext}{\operatorname{Ext}}

\newcommand{\del}{\operatorname{del}}
\newcommand{\im}{\operatorname{Im}}

\newcommand{\id}{\operatorname{id}}
\newcommand{\Hom}{\operatorname{Hom}}

\newcommand{\grade}{\operatorname{grade}}
\newcommand{\cograde}{\operatorname{cograde}}

\renewcommand{\top}{\operatorname{\mathrm{top}}}

\newcommand{\soc}{\operatorname{\mathrm{soc}}}

\renewcommand{\mod}{\operatorname{mod}}

\renewcommand{\ker}{\mathrm{Ker}}
\newcommand{\coker}{\mathrm{Coker}}

\newcommand{\idim}{\operatorname{idim}}
\newcommand{\pdim}{\operatorname{pdim}}

\title{Classification of Auslander-Gorenstein monomial algebras: The acyclic case}
\date{\today}
\dedicatory{Dedicated to Claus Michael Ringel on the occasion of his eightieth birthday}

\author[V.~Kl\'asz]{Vikt\'oria Kl\'asz$^\star$}%
\address[V.~Kl\'asz]{Mathematical Institute of the University of Bonn, Endenicher Allee 60, 53115 Bonn, Germany}%
\email{klasz@math.uni-bonn.de}%
\thanks{$^\star$Supported by the Deutsche Forschungsgemeinschaft
(DFG, German Research Foundation) under Germany's Excellence Strategy - GZ 2047/1, Projekt-ID
390685813\\}

\author[M.~Kleinau]{Markus Kleinau$^\star$}%
\address[M.~Kleinau]{Mathematical Institute of the University of Bonn, Endenicher Allee 60, 53115 Bonn, Germany}%
\email{mkleinau@math.uni-bonn.de}%

\author[R.~Marczinzik]{Ren\'e Marczinzik}%
\address[R.~Marczinzik]{Mathematical Institute of the University of Bonn, Endenicher Allee 60, 53115 Bonn, Germany}
\email{marczire@math.uni-bonn.de}
\date{\today}
\subjclass[2010]{Primary 16G10, 16E10}
\keywords{monomial algebras, Bruhat factorisation, Coxeter matrix, Auslander regular algebras}

\begin{document}

\begin{abstract}
We give a linear algebraic classification of Auslander regular acyclic monomial algebras via the Bruhat factorisation of the Coxeter matrix. Namely, we show under mild assumptions that a monomial acyclic quiver algebra is Auslander regular if and only if its Coxeter matrix $C$ has a Bruhat factorisation $U_1 P U_2$ with $U_1$ the identity matrix. In particular, this holds without restrictions for linear Nakayama algebras and we use the Bruhat decomposition to answer a question raised by Ringel by showing that his homological permutation coincides with the permutation coming from the Bruhat factorisation of the Coxeter matrix. 
We also use our methods to show that general Auslander regular acyclic quiver algebras are echelon-independent, proving a conjecture of Defant-Jiang-Marczinzik-Segovia-Speyer-Thomas-Williams and we answer another question by Ringel on the delooping level of simple modules over Nakayama algebras.
\end{abstract}
\maketitle

\setcounter{tocdepth}{1}
\tableofcontents

\section{Introduction}
\label{sec::intro}
Let $R$ be a Noetherian (not necessarily commutative) ring. Then $R$ is called 
\textcolor{blue}{\emph{n-Gorenstein}} if there exists an injective coresolution of the $R$-module $R_R$
$$0 \rightarrow R_R \rightarrow I^0 \rightarrow I^1 \rightarrow I^2 \rightarrow \ldots$$
such that the flat dimension of $I^i$ is at most $i$ for all $n > i \geq 0$.
We call $R$ \textcolor{blue}{\emph{Auslander-Gorenstein}} if the above resolution is finite and if $R$ is $n$-Gorenstein for all $n\ge 0.$ 
We say that $R$ is \textcolor{blue}{\emph{Auslander regular}} if it is Auslander-Gorenstein of finite global dimension. 
By Bass's classical result \cite{B}, a commutative local ring $R$ is Gorenstein (resp. regular) in the classical sense of commutative algebra if and only if it is Auslander-Gorenstein (resp. Auslander regular). This was one of the motivations of Auslander to introduce the notion of Auslander-Gorenstein rings.
There are many important classes of non-commutative rings that are Auslander-Gorenstein.
Let us mention, for example, the enveloping algebras of finite dimensional Lie algebras, Weyl algebras, and rings of $\mathbb{C}$-linear differential operators on an irreducible smooth subvariety of an affine
space, see \cite[Chapter 3]{VO} for more details.
In this work, we focus on the setting of finite dimensional algebras, where recently several important classes of algebras were discovered to be Auslander-Gorenstein.
In \cite{IM}, it was shown that incidence algebras of finite lattices are Auslander-regular if and only if the lattice is distributive, and in \cite{KMM} it was proven that blocks of category $\mathcal{O}$ for semisimple Lie algebras are Auslander regular.
In the following, we will assume that our rings are finite dimensional $K$-algebras over a field $K$, and that they are given as path algebras of connected quivers modulo an admissible ideal. 
We call such algebras quiver algebras. 
This is no loss of generality over an algebraically closed field $K$, as any finite dimensional ring-indecomposable $K$-algebra $A$ is Morita equivalent to a quiver algebra, and all our homological notions in this article are invariant under Morita equivalence.
Recall that the \textcolor{blue}{\emph{Cartan matrix} $\omega_A$}=$(\omega_{ij})_{i,j}$  of a quiver algebra $A$ with $n$ primitive orthogonal idempotents $e_i$, corresponding to the vertices of the quiver, is defined as the $n \times n$-matrix with entries $\omega_{ij}=\dim e_j A e_i$ for $1 \leq i,j \leq n$. 
In other words, the $i$-th row vector of $\omega_A$ is the dimension vector of the indecomposable injective $A$-module $I(i)\cong D(Ae_i),$ or equivalently, the $i$-th column vector of $\omega_A$ is the dimension vector of the indecomposable projective $A$-module $P(i)\cong e_iA.$  
When $A$ has finite global dimension, the Cartan matrix is invertible over the integers. (For a proof of the previous statements, see  for  example \cite[Chapter III, Proposition 3.8 and 3.10]{ASS}.) Therefore, in this case, we can define the \textcolor{blue}{\emph{Coxeter matrix} $C_A$} of $A$ as the $n \times n$-matrix $C_A:=- \omega_A^T \omega_A^{-1}.$ 
The Coxeter matrix is the linear algebraic version of the Auslander-Reiten translate for hereditary algebras and the main object of study in the spectral analysis of finite dimensional algebras, see for example the survey \cite{LP}.
Recently, it was shown in \cite{KMT} that the Coxeter matrix plays a central role in the study of Auslander regular algebras as well.
In this article, we are interested in the following problem:
\begin{problem} \label{monomialARclassificationproblem}
Classify all Auslander regular monomial algebras.
\end{problem}
A quiver algebra $A=KQ/I$ is called \textcolor{blue}{\emph{monomial}} if the admissible ideal $I$ is generated by monomials, i.e. by a set of paths. This class of quiver algebras contains many important and well-studied classes of algebras, such as hereditary algebras, gentle algebras, and Nakayama algebras. In this paper, 
Nakayama algebras play a central role. Recall that a quiver algebra $A$ is called a \textcolor{blue}{\emph{Nakayama algebra}} if every indecomposable left and right $A$-module is uniserial, meaning it has a unique composition series.
Nakayama algebras can be grouped into two classes: \textcolor{blue}{\textit{linear}} and \textcolor{blue}{\textit{cyclic}} Nakayama algebras.
The connected linear Nakayama algebras are those whose quiver is of the form

\begin{equation}
\label{eq:linear_quiver}
    \begin{tikzcd}
	1 & 2 & \ldots & {n-1} & n
	\arrow[from=1-1, to=1-2]
	\arrow[from=1-2, to=1-3]
	\arrow[from=1-3, to=1-4]
	\arrow[from=1-4, to=1-5]
\end{tikzcd}
\end{equation}
and the cyclic Nakayama algebras are those with a cyclic quiver

\begin{equation}
    \label{eq:cyclic_quiver}
\begin{tikzcd}
	& 1 & 2 \\
	n &&& 3 \\
	{n-1} &&& m \\
	& {m+2} & {m+1}
	\arrow[from=1-2, to=1-3]
	\arrow[from=1-3, to=2-4]
	\arrow[from=2-1, to=1-2]
	\arrow[dotted, from=2-4, to=3-4]
	\arrow[from=3-1, to=2-1]
	\arrow[from=3-4, to=4-3]
	\arrow[dotted, from=4-2, to=3-1]
	\arrow[from=4-3, to=4-2]
\end{tikzcd}
\end{equation}

Note that this is a much more restrictive class than monomial algebras, where $Q$ is allowed to be arbitrary.
However, as was shown in  \cite{Kla}, the classification of Auslander regular monomial algebras can be reduced to classifying all Auslander regular Nakayama algebras, we refer to \cite[Theorem 4.1 and 5.1]{Kla} for full details. In particular, a full classification by quiver and relations for 2-Gorenstein monomial algebras is achieved there, which we also discuss in Section \ref{subsec::monomial_results}.
In conclusion, Problem \ref{monomialARclassificationproblem} reduces to the following.

\begin{problem} \label{NakayamaARclassificationproblem}
Classify all Auslander regular Nakayama algebras.
\end{problem}

In this article, we solve Problem \ref{NakayamaARclassificationproblem}, and hence Problem \ref{monomialARclassificationproblem} for acyclic quivers by reducing the problem to linear algebra.
Recall that for an invertible matrix $M$ a \textcolor{blue}{\emph{Bruhat decomposition}} of $M$ is a decomposition $M=U_1 P U_2$ with upper triangular matrices $U_1$ and $U_2$ and a permutation matrix $P$. The Bruhat decomposition is a classical tool in mathematics, especially important in Lie theory, we refer for example to \cite{Bu}. While the Bruhat decomposition is not unique in general, the permutation matrix $P$ is uniquely determined by $M$.
Let $A$ be a finite dimensional quiver algebra of finite global dimension with a fixed numbering of the vertices of $A$ from $1$ to $n$. The permutation corresponding to the permutation matrix in a Bruhat decomposition of the Coxeter matrix of $A$ is called the \textcolor{blue}{\emph{Coxeter permutation}} of $A$ with respect to the given order of the vertices.
In \cite{KMT}, the Coxeter permutation was introduced and it was shown that it generalises the well-studied rowmotion bijection on distributive lattices.
In \cite{DJMSSTW}, the Coxeter permutation (or more precisely, its inverse) was studied under the name echelonmotion for general incidence algebras of posets and it was shown that it recovers several well-studied bijections in combinatorics.
Note that, in general, the Coxeter permutation depends on the numbering of the vertices of our quiver. 
For linear Nakayama algebras, we will always assume in the following that the vertices are labelled by the canonical ordering from 1 to $n$, as shown in (\ref{eq:linear_quiver}).
More generally, for acyclic monomial algebras, we will always assume that their vertices are \textcolor{blue}{\textit{naturally labelled}}, which means that if there is a path from a vertex $i$ to a vertex $j$ with $j\neq i$ then we have $i<j.$
Our first main result is the following:
\begin{theorem}(Theorem \ref{thm::C=PU_iff_ARbijnak}  and \Cref{thm::AG_iff_C=PU_acyclic_2Gorenstein_monomial})
\label{thm:linearNakayama_AGiffC=PU_sec1}
Let $A$ be a linear Nakayama algebra or a 2-Gorenstein, acyclic monomial algebra. Then $A$ is Auslander regular if and only if there exists a Bruhat decomposition $C_A=U_1 P U_2$ of the Coxeter matrix of $A$ where $U_1$ is the identity matrix.
\end{theorem}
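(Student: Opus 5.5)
The plan is to translate both sides of the equivalence into statements about the columns of $C_A$, and then match them up through the minimal injective coresolutions of the projective modules.

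\textbf{Step 1: the linear-algebra side.} The condition $C_A=PU_2$ with $U_2$ invertible upper triangular is equivalent to a ``pivot'' condition on the columns of $C_A$. For each $j$, column $j$ of $C_A$ must have its lowest nonzero entry in row $\sigma(j)$, and $\sigma$ must be a permutation. Equivalently, the matrices $P^{-1}C_A$ built from the last nonzero entries are upper triangular.

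We use the standard identity $C_A\underline{\dim}P(i)=-\underline{\dim}I(i)$: the Coxeter matrix sends the dimension vector of $P(i)$ to minus that of $I(i)$. So a column of $C_A$ is $\omega_A^T$ applied to a column of $\omega_A^{-1}$, up to sign. For finite global dimension, the columns of $\omega_A^{-1}$ are the alternating sums $\sum_k(-1)^k[\,\text{minimal projective resolution of } S(j)\,]$ in the basis of projectives. Applying $-\omega_A^T$ replaces each $P(i)$ by $I(i)$. Hence column $j$ of $C_A$ is the alternating sum of the dimension vectors of the $\nu$-images of the terms of the minimal projective resolution of $S(j)$. Dually, one can work with $C_A^{-1}$ and the injective coresolutions.

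\textbf{Step 2: the homological side.} Auslander regularity of an algebra of finite global dimension is governed by the minimal injective coresolution of $A$. Via the ``Auslander--Reiten bijection'' of Iyama, each indecomposable projective $P(i)$ has a last term $I^{d_i}(P(i))$ that is indecomposable, say $I(\pi(i))$, and $\pi$ is a permutation.

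For linear Nakayama algebras everything is uniserial, so I would compute these coresolutions explicitly via Kupisch series. I would then show that Auslander regularity is equivalent to the following: for each $i$, the alternating sum of the terms $I^k(P(i))$ has its last (in the natural order) composition factor determined only by the final term. The natural ordering makes the dominant vertex in each term strictly increase along the coresolution, so the pivot is the socle of the last term. This yields $C_A=PU_2$ with permutation $\pi$, and it also gives Ringel's permutation as a byproduct.

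\textbf{Step 3: the converse (main obstacle).} We must show that $C_A=PU_2$ forces dominant dimension-type conditions. Suppose $A$ is not Auslander regular. Take the smallest $k$ and a projective $P(i)$ with $\operatorname{pdim} I^k(P(i))>k$. I would show that some injective summand then appears with cancellation in a way that makes two columns share the same lowest pivot row, which contradicts $\sigma$ being a permutation. For linear Nakayama algebras this is a combinatorial analysis of Kupisch series and is doable, if tedious.

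For the 2-Gorenstein acyclic monomial case, I would invoke the reduction of \cite{Kla}. Via \cite[Theorem 4.1 and 5.1]{Kla}, Auslander regularity of such $A$ is equivalent to Auslander regularity of certain linear Nakayama algebras attached to maximal paths. It then remains to check that the Bruhat condition on $C_A$ decomposes compatibly into the Bruhat conditions for these Nakayama pieces. The difficulty is that $C_A$ does not literally split as a block matrix, so the natural labelling and the triangularity of $\omega_A$ must be used to isolate the pieces. I expect this compatibility check, together with the converse direction in Step 3, to be the hardest part.
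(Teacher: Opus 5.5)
There is a genuine gap in the converse direction (Bruhat $\Rightarrow$ Auslander regular), and Step 1 also mistranslates the Bruhat condition.

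\textbf{Step 1 and the row-pivot bookkeeping.} $C_A=PU_2$ means $P^{-1}C_A$ is upper triangular. Equivalently, the leftmost nonzero entries of the \emph{rows} of $C_A$ lie in pairwise distinct columns. Your condition on the lowest entries of the columns characterises $C_A=U_1P$ instead. For example, $\left(\begin{array}{cc}0&1\\1&1\end{array}\right)=\left(\begin{array}{cc}0&1\\1&0\end{array}\right)\left(\begin{array}{cc}1&1\\0&1\end{array}\right)$ has both column pivots in row $2$.

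The right bookkeeping is $C_{ij}=-\langle\underline{\dim}S_j,\underline{\dim}P(i)\rangle_A=-\sum_k(-1)^k\dim\Ext^k_A(S_j,P(i))$. So row $i$ of $C_A$ is, up to sign, the alternating count of summands $I(j)$ (not composition factors) in the minimal injective coresolution of $P(i)$. In a natural labelling the socle indices along that coresolution strictly \emph{decrease}, not increase, since $\Hom_A(I(a),I(b))\neq 0$ forces $a\ge b$. For a linear Nakayama algebra all cosyzygies are indecomposable \emph{whether or not $A$ is Auslander regular}. Hence no cancellation ever occurs, and the leftmost nonzero entry of row $i$ always sits in column $\sigma(i)$, where $I(\sigma(i))$ is the last term. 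This is property $\circledast$. In particular, the condition you formulate in Step 2 holds for every linear Nakayama algebra, so it cannot be equivalent to Auslander regularity. What is true is that the Bruhat condition is equivalent to $\sigma$ being a bijection (Theorem \ref{thm::C=PU_iff_ARbij}).

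\textbf{The converse.} This shows the real gap is Step 3. The ``cancellation producing coinciding pivots'' you envisage cannot happen. What the converse actually needs is the homological implication ``$\sigma$ bijective $\Rightarrow$ $A$ Auslander--Gorenstein''. This is not a routine Kupisch-series check: for arbitrary quiver algebras the corresponding characterisation is an open conjecture. Your plan gives no argument for it; choosing the smallest $k$ with $\pdim I^k>k$ does not lead to a pivot collision. The paper imports this implication from \cite[Theorem 7.9]{Kla} together with Remark \ref{rmk::inverse_ARbij}: a monomial algebra is Auslander--Gorenstein iff $P\mapsto\Omega^{-\idim P}(P)$ is well defined and bijective. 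Acyclicity then gives finite global dimension.

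\textbf{The monomial case.} The paper does not decompose into Nakayama pieces; you leave that open, and it is unclear how it would work since $C_A$ does not split. Instead it uses the string-algebra structure from Theorem \ref{thm::2Gor_monomial} to show directly two things. First, $\Omega^{-\idim P(i)}P(i)$ is indecomposable (Lemma \ref{lem:InjResOfProj_2gorensteinmonomial}). Second, summands of later terms have paths to summands of earlier terms, so property $\circledast$ holds (Corollary \ref{cor:decreasing_labels_injective_resol}). It then runs the same row-pivot argument and applies \cite[Theorem 7.9]{Kla}.

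The 2-Gorenstein hypothesis is essential exactly at the step of pinning down the row pivots. \cite[Example 5.2]{KMT} is an acyclic monomial algebra that is not Auslander regular, yet its $C_A$ is upper triangular up to reordering rows. So any proof of the converse must use 2-Gorenstein there, and your sketch does not.
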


We mention that this theorem is not only interesting from a theoretical point of view but also from a computational aspect. When testing whether a given acyclic monomial algebra with around 50 simple modules is Auslander regular using the homological definition, the GAP-package \cite{QPA} took several hours, while it only takes seconds when using the above theorem that reduces the problem to linear algebra. 
We remark that it is an open question whether the same characterisation of Auslander regular algebras via their Coxeter matrix holds for incidence algebras of finite bounded posets, see \cite[Question 5.3]{KMT} where such a result was proven for the first time for the subclass of lattices.
In \cite[Conjecture 8.2]{DJMSSTW} it was conjectured that if the incidence algebra $KP$ of a poset $P$ is Auslander regular, then it is echelon-independent  in the sense that the Coxeter permutation does not depend on the natural labelling, see \cite[Definition 1.1]{DJMSSTW} for the precise definition.
More generally, we define an acyclic quiver algebra to be \textcolor{blue}{\emph{Coxeter-independent}} if the Coxeter permutation does not depend on the chosen natural labelling of the algebra. 

 The next theorem proves one direction of Question 5.3 in \cite{KMT}.

\begin{theorem}
(Theorem \ref{maintheoremauslanderregimpliescoxbruhat})
Let $A$ be a naturally labelled acyclic Auslander regular algebra.
Then there exists a Bruhat decomposition of the Coxeter matrix $C_A=U_1 P U_2$ with $U_1=\id$.
In this case, the Coxeter permutation coincides with the Auslander-Reiten permutation.
\end{theorem}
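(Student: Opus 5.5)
We want $C_A = P U$ with $U$ upper triangular and $P$ the Auslander--Reiten permutation. Since $C_A=-\omega_A^T\omega_A^{-1}$, this is equivalent to $-\omega_A^T = P U \omega_A$. Because the labelling is natural, $\omega_A$ is triangular: $\omega_{ij}=\dim e_jAe_i$ is nonzero only when there is a path between $i$ and $j$. Consequently $U\omega_A$ is triangular of the same type whenever $U$ is. So the goal is to exhibit $\omega_A^T$, up to sign, as a permutation matrix times a triangular matrix of the appropriate type.

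\textbf{Step 1: rewrite the goal as an exact-sequence statement.} The columns of $\omega_A$ are the dimension vectors $\underline{\dim}\,P(i)$, and the columns of $\omega_A^T$ are the $\underline{\dim}\,I(i)$. It therefore suffices to show that each $\underline{\dim}\,I(\pi(i))$ is an integer combination of $\underline{\dim}\,P(i)$ and of the $\underline{\dim}\,P(j)$ for $j$ on one side of $i$ in the order, with the correct sign and coefficient $\pm1$ on $P(i)$. Here $\pi$ is the Auslander--Reiten permutation.

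\textbf{Step 2: obtain the combination from the resolution.} For an Auslander regular algebra (Iyama's theory of Auslander--Reiten bijections, as recalled earlier via \cite{KMT}), the permutation $\pi$ is characterised as follows. Take the minimal projective resolution of the simple $S_i$,
$$0\to P_d\to\cdots\to P_0\to S_i\to0 .$$
Then $P_d$ is indecomposable, namely $P(\pi(i))$. Dually, applying $\Hom_A(-,A)$ gives a minimal projective resolution of a simple left module, so the resolution is "self-dual". Take the Euler characteristic of the resolution. Applying the Nakayama functor $\nu=D\Hom_A(-,A)$ to $P(\pi(i))$ gives $I(\pi(i))$. Applying $\nu$ to the whole resolution yields a complex of injectives whose only homology is concentrated appropriately, using Auslander regularity: $\Ext^k(S_i,A)=0$ for $k\neq d$, and $\Ext^d(S_i,A)$ is simple. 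Passing to dimension vectors gives
$$\textstyle\sum_k(-1)^k\,\underline{\dim}\,\nu P_k = \pm\,\underline{\dim}\,(\text{a simple}).$$
From the two relations (the resolution of $S_i$ and its $\nu$-image), and inverting $\omega_A$, we get the linear identity
$$-\omega_A^T e_{\pi(i)} = \omega_A\Big(e_i+\sum_{j} c_{j} e_j\Big),$$
where $e_i$ here is the standard basis vector. All $P(j)$ occurring in $P_1,\dots,P_d$ are indexed by vertices $j$ with an arrow path from $i$, so $j>i$ by natural labelling. The same holds for the terms arising from the simple at the end. This gives the triangularity.

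\textbf{Step 3: orientation and the main obstacle.} Because of the transpose in the definition, I would be careful whether the resulting $U$ is upper or lower triangular. If the natural order produces lower triangularity, I would conjugate by the longest permutation, or equivalently verify the dual statement with the rows of $\omega_A$ in place of the columns. I expect the main obstacle to be this bookkeeping: identifying the precise simple appearing after applying $\nu$ and its index relative to $i$, so that $U$ has $1$'s, or at least units, on the diagonal and is upper triangular on the correct side. Uniqueness of the permutation part of a Bruhat decomposition then identifies the Coxeter permutation with $\pi$.
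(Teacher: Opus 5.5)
There is a genuine gap in Step 2. The Auslander--Reiten permutation cannot be read off from minimal projective resolutions of simple modules, and the facts you invoke there fail for Auslander regular algebras in general: simples need not be perfect, $\Ext^{\grade S}_A(S,A)$ need not be simple, and the last term of the resolution of $S_i$ need not be indecomposable.

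Take $A=K(1\to 2\to 3)/\mathrm{rad}^2$, which is naturally labelled and Auslander regular. Here $P(1)=I(2)$ and $P(2)=I(3)$ are projective-injective, $0\to P(3)\to I(3)\to I(2)\to I(1)\to 0$, and $\pdim I(1)=\pdim S_1=2$. The minimal projective resolutions of $S_1$, $S_2$ and $S_3$ all end in $P(3)$, so your recipe does not even define a permutation. By contrast, $\hat\psi$ is $1\mapsto 3$, $2\mapsto 1$, $3\mapsto 2$. Moreover, $S_2=\soc P(1)$ has grade $0$ but projective dimension $1$, and both $\Hom_A(S_2,A)$ and $\Ext^1_A(S_2,A)$ are nonzero. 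So the resolution of $S_2$ is not ``self-dual'', and the Euler characteristic of its $\nu$-image is $(1,0,-1)$, not $\pm$ a simple. The Auslander--Reiten permutation lives on the minimal injective coresolutions of the indecomposable projectives (equivalently, the projective resolutions of the indecomposable injectives), not on resolutions of simples.

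Step 1 has a second, independent problem. You flag it as bookkeeping, but it is fatal: the permutation ends up on the wrong side. In a natural labelling $\omega_A$ is lower triangular, since $\omega_{ij}\neq 0$ requires a path from $j$ to $i$. Hence $U\omega_A$ is not triangular for the upper triangular $U$ of a Bruhat decomposition. What you actually set out to prove is $-\omega_A^T Q=\omega_A V$ with $Q$ a permutation matrix and $V$ triangular. That says $(C_A^{-1})^T=-\omega_A^{-1}\omega_A^T=VQ^{-1}$, i.e. $C_A=(V^T)^{-1}Q^{-1}$, a triangular matrix times a permutation.

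Even with the correct input this does not help. The projective resolution of $I(j)$ ends in $P(\hat\psi(j))$ and all earlier summands have smaller labels, so $V$ is upper triangular. You would therefore get $C_A=LQ^{-1}$ with $L$ lower triangular, which is a decomposition for the opposite Borel, not $C_A=PU$. Conjugating by the longest permutation replaces the natural labelling by its reverse and swaps upper and lower triangular matrices, so it does not reach the required double coset either.

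The missing idea is to work with the rows of $C_A$. One has $C_{ij}=-\langle\underline{\dim}\,S_j,\underline{\dim}\,P(i)\rangle_A=-\sum_k(-1)^k\dim\Ext^k_A(S_j,P(i))$, which by Lemma \ref{Bensonlemma} is read off from the minimal injective coresolution $0\to P(i)\to I^0\to\cdots\to I^d\to 0$. The key input is then property $\circledast$:
\begin{enumerate}[(1)]
\item $I^d=I(\sigma(i))$ is indecomposable.
\item $I^d$ cannot occur in $I^k$ for $k<d$, because $\pdim I^k\le k<d=\pdim I^d$.
\item Every other summand $I(a)$ satisfies $a>\sigma(i)$. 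This holds because cosyzygies of indecomposable projectives are indecomposable, so each indecomposable summand of $I^k$ maps nonzero into $I^{k+1}$, and $\Hom_A(I(a),I(b))\neq 0$ forces $a\ge b$.
\end{enumerate}
Then row $i$ of $C_A$ has its first nonzero entry in column $\sigma(i)$, and bijectivity of $\sigma$ gives $C_A=PU$.
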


We refer to the preliminaries for the definition of the Auslander-Reiten permutation associated to an Auslander-Gorenstein algebra. As a consequence of the previous theorem, we prove the conjecture \cite[Conjecture 8.2]{DJMSSTW} in the following much more general version:

\begin{theorem}(Theorem \ref{cor::coxeter-indep})
Every acyclic Auslander regular algebra is Coxeter-independent. 
\end{theorem}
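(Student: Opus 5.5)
The plan is to deduce Coxeter-independence directly from the preceding theorem (Theorem \ref{maintheoremauslanderregimpliescoxbruhat}). Let $A$ be an acyclic Auslander regular algebra, and fix two natural labellings of its vertices. These labellings differ by a relabelling $\sigma$ of the vertex set. Relabelling by $\sigma$ conjugates the Cartan matrix, and hence the Coxeter matrix, by the permutation matrix $P_\sigma$. Under either labelling, the preceding theorem states that the Coxeter permutation coincides with the Auslander-Reiten permutation.

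The Auslander-Reiten permutation is defined intrinsically. For an Auslander-Gorenstein algebra it is the bijection that sends the vertex $i$ to the vertex $j$ determined by the minimal injective coresolution of the indecomposable projective $P(i)$: the last term is the indecomposable injective $I(j)$, or the bijection is given through the corresponding dominant-dimension/grade data. It is therefore a bijection between vertices (simple modules), independent of any numbering. So the two Coxeter permutations, viewed as permutations of the vertex set rather than of $\{1,\dots,n\}$, are both equal to this intrinsic bijection, and hence equal to each other. This is precisely Coxeter-independence, once the definition is read as comparing the permutations transported back to the vertex set.

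The one point to check carefully is the bookkeeping of the identification. Under a labelling $\ell$, the Coxeter permutation $\pi_\ell \in S_n$ must correspond to the map on vertices $\ell^{-1}\circ \pi_\ell \circ \ell$, and the same convention must be used when identifying the Auslander-Reiten permutation with $\pi_\ell$ in Theorem \ref{maintheoremauslanderregimpliescoxbruhat}. It is also worth verifying that the uniqueness of the permutation in a Bruhat decomposition is used correctly: Bruhat uniqueness is not preserved under arbitrary conjugation by $P_\sigma$. That is exactly why a direct linear-algebra argument fails and the homological identification is needed. Once the conventions match, the corollary is immediate. The only real "obstacle" is that all of the content lies in the previous theorem; here the main care is ensuring that the statement is independent of how $n$ is matched to vertices.
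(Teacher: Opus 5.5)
Your proposal is correct and follows the paper's argument: the paper proves the statement as a direct consequence of Theorem~\ref{maintheoremauslanderregimpliescoxbruhat}, since for every natural labelling the Coxeter permutation equals the Auslander--Reiten permutation, which is defined intrinsically on the simple modules. One harmless slip: the map you describe (sending $i$ to $j$ when $I(j)$ is the last term of the minimal injective coresolution of $P(i)$) is, in the paper's conventions, the inverse $\sigma=\hat{\psi}^{-1}$ of the Auslander--Reiten permutation, but it is equally independent of the labelling, so the argument is unaffected.
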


We will study Coxeter-independent algebras more generally for acyclic algebras in forthcoming work.
In \cite{R}, Ringel studied the finitistic dimension of Nakayama algebras, and during his study, he discovered a \textcolor{blue}{\emph{homological bijection} $h$} on the simple modules of Nakayama algebras that is defined as follows:
For a simple module $S$ define $\textcolor{blue}{e(S)}:=\min \{ \pdim S, \pdim I(S) \}$ and set $h(S)=\top \Omega^{e(s)}(N(S)),$ 
where $N(S)=S$ if $e(S)$ is odd and $N(S)=I(S)$ if $e(S)$ is even. 
Here $I(S)$ denotes the injective envelope of $S.$
Note that $e(S)$ was proven to be finite for every simple module $S$ over a Nakayama algebra.
Assume that our Nakayama algebra has vertices $\{1,\ldots, n\}.$
Then we define the map $\textcolor{blue}{\hat{h}}: \{1,\ldots,n \} \rightarrow \{1,\ldots,n\}$ as $\hat{h}(i)=j$ if $h(S_i)=S_j$, where $S_i$ denotes the simple module corresponding to vertex $i$ in the quiver. 
We call $\hat{h}$ the \textcolor{blue}{\emph{homological permutation}} of the Nakayama algebra.
The importance of the invariant $e(S)$ comes from the fact that $\max \{ e(S) \mid S \ \text{simple} \}$ is equal to the finitistic dimension of a Nakayama algebra and $h$ can be used to show that the left finitistic dimension coincides with the right finitistic dimension for Nakayama algebras, as was shown by Ringel in \cite{R}. We also refer to \cite{R} for more information on the homological bijection of Nakayama algebras and its properties.
After the discovery of the homological bijection, Ringel posed the question to several experts whether this bijection also has an elementary description. This is especially interesting in the case of linear Nakayama algebras, as they are in canonical bijection with Dyck paths (see, for example \cite{MRS} and \cite{KMMRS}), and thus, in this case, Ringel's homological bijection can be seen as a map from the set of Dyck paths to permutations.
Our second main result provides an elementary description of $h$:
\begin{theorem}(Theorem {\ref{thm::h=Coxeter_perm}})
\label{thm:h=Coxeter_perm_sec1}
Let $A$ be a linear Nakayama algebra. Then the Coxeter permutation of $A$ coincides with Ringel's homological permutation.
\end{theorem}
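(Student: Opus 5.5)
Let $A$ be a linear Nakayama algebra with Kupisch series $(c_1,\dots,c_n)$, so that $P(i)$ has composition factors $i,i+1,\dots,i+c_i-1$. The plan is to compute the Coxeter permutation from an explicit Bruhat-type factorisation, and to compare it with $\hat h$ by showing that both are determined by the same combinatorial data of dimension vectors. Recall that $C_A=-\omega_A^T\omega_A^{-1}$, and that the Bruhat permutation of an invertible matrix $M$ is characterised by rank conditions: $P$ is the unique permutation matrix such that, for all $k,l$, the rank of the lower-left submatrix of $M$ (rows $k,\dots,n$, columns $1,\dots,l$) equals that of $P$. Right multiplication by an invertible upper triangular matrix does not change these ranks. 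Since $\omega_A$ is unitriangular for a natural labelling, so is $\omega_A^{-1}$. Hence the Coxeter permutation of $A$ equals the Bruhat permutation of $\omega_A^{T}\,U$ for any convenient unitriangular $U$. More conceptually, the columns of $C_A$ are the images under the Coxeter transformation of the dimension vectors $\underline{\dim}\,P(i)$. In fact $C_A\,\underline{\dim}P(i)=-\underline{\dim}I(i)$, and this extends additively along projective resolutions.

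First, I would show that the Bruhat permutation can be read off column by column via a "pivot" rule. Work with the column operations allowed by $U_2$ (adding earlier columns to later ones). We seek a basis of the column space in which each vector $v_l$ is a combination of $C_A e_1,\dots,C_A e_l$ whose lowest nonzero entry sits in a row not already used; the permutation sends $l$ to that row. Using $C_A\,\underline{\dim}M=$ (alternating sum along resolutions), I would express $C_A e_l=C_A\,\underline{\dim}S_l$ through a minimal projective resolution of $S_l$. Equivalently, by the injective version, I would use a minimal injective coresolution of $S_l$, with suitable signs. The key computation is that in a Nakayama algebra the syzygies are uniserial, so each term contributes an interval vector $[a,b]$. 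The telescoping sum of these interval vectors along $\Omega^k(S_l)$ then has a controlled lowest entry.

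Second, I would match this with Ringel's definition. Take $e=e(S_l)=\min\{\pdim S_l,\pdim I(S_l)\}$, and resolve $N(S_l)$ ($S_l$ or $I(S_l)$, according to parity). I would show that a suitable signed combination of $C_A e_j$ for $j\le l$, obtained via the exact sequences for $\Omega^{k}N(S_l)$ for $k\le e$, has as its lowest nonzero entry exactly the top of $\Omega^{e}N(S_l)$, namely $\hat h(l)$. This uses the parity switch between simple and injective, which makes the boundary terms cancel. The condition "minimum of the two projective dimensions" is what guarantees that the vector obtained involves only columns $\le l$ modulo upper triangular operations. Since $\hat h$ is a bijection (Ringel), the pivots are distinct, and the rank characterisation then forces the Bruhat permutation to be $\hat h$. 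I would cross-check the result against the Auslander regular case from Theorem~\ref{thm:linearNakayama_AGiffC=PU_sec1}, where both maps should reduce to the Auslander--Reiten permutation.

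The main obstacle is the second step. One must verify that the pivot of the combination really is $\top\Omega^{e}N(S_l)$, and that no lower entry survives. This requires a careful case analysis of how the intervals $[\top\Omega^k, \soc\Omega^k]$ in the Nakayama resolution nest, together with the comparison between $\pdim S_l$ and $\pdim I(S_l)$. I would first try to prove a lemma that the partial sums along the resolution of $S_l$, respectively $I(S_l)$, have support in an interval ending at $\top\Omega^{k}$. An induction on $k$ using the Kupisch series would then finish the argument.
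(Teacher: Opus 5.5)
Your route is correct and genuinely different from the paper's. Once you make the ``suitable signed combination'' explicit, the step you call the main obstacle is short.

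Take $u_l:=\underline{\dim}\,N(S_l)$. Since $N(S_l)\in\{S_l,I(l)\}$ and $I(l)=[g(l)+1,l]$, the vector $u_l$ is supported on $\{1,\dots,l\}$ with $(u_l)_l=1$. Hence $U=(u_1|\cdots|u_n)$ is upper unitriangular. This is automatic and has nothing to do with the minimum in $e(S_l)$.

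Now let $0\to P(a_e)\to\cdots\to P(a_0)\to N(S_l)\to 0$ be a minimal projective resolution. Then $C_Au_l=-\sum_k(-1)^k\underline{\dim}\,I(a_k)$, where $\underline{\dim}\,I(a)$ is the indicator vector of $[g(a)+1,a]$. Moreover $a_0<a_1<\cdots<a_e$, because each syzygy is a nonzero submodule of the radical of the previous projective. So the lowest nonzero entry of $C_Au_l$ is $\pm1$ in row $a_e$. No cancellation, parity argument, or analysis of nesting intervals is needed.

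The minimum and the parity rule enter only to guarantee $\pdim N(S_l)=e(S_l)$, which is Lemma~\ref{lem::e(S)odd_or_even} (Madsen). Then $a_e=\top\Omega^{e(S_l)}N(S_l)=\hat h(l)$. With Ringel's bijectivity of $\hat h$, the pivots are distinct, so $U_1:=C_AUP^{-1}$ is upper triangular, where $P$ sends column $l$ to row $\hat h(l)$. This gives the Bruhat factorisation $C_A=U_1PU^{-1}$ and $p_A=\hat h$, uniformly in $e(S_l)$.

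The paper argues differently. Via Lemma~\ref{bruhatfromcartan} it reads $p_A$ off as the row permutation of $P[\omega_A]$. It then proves a span condition for truncated rows of $\omega_A$, using the auxiliary module $M$ of Lemmas~\ref{lem::existanceOfM} and~\ref{lem::dimvectorofM_linear}. That module has an injective coresolution of length $e(S_i)-1$ ending in $I(i)$, and its dimension vector is a truncated row of $\omega_A$. The cases $e(S_i)\le 1$ are treated separately. This only yields $p_A(i)\ge\hat h(i)$, and equality then follows by counting.

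Your version is much shorter and produces an explicit factorisation. The paper's heavier construction of $M$ pays off elsewhere: it is reused to prove $\del(S)=e(S)$ for all Nakayama algebras, cyclic ones included (Theorem~\ref{thm::del(S)=e(S)}).

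Two statements in your first paragraph are wrong and should be removed.
\begin{itemize}
\item $\omega_A^{-1}$ is \emph{lower} unitriangular, so it cannot be absorbed on the right. For upper unitriangular $U$, the matrix $\omega_A^TU$ is upper triangular, so its Bruhat permutation is the identity; this contradicts Example~\ref{ex::Nakayama}. What is true is that the upper triangular factor $-\omega_A^T$ can be absorbed on the left, so $P[C_A]=P[\omega_A^{-1}]$.
\item The columns of $C_A$ are the images of $e_i=\underline{\dim}\,S_i$, not of $\underline{\dim}\,P(i)$.
\end{itemize}
Neither slip is used later, since your actual argument works directly with column operations on $C_A$.
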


In \cite{G}, G\'elinas introduced the \textcolor{blue}{\emph{delooping level}} of a module $M$ as $\textcolor{blue}{\del (M)}:= \inf \{ t \geq 0 \mid \Omega^t(M) \ \text{is a (t+1)-th syzygy module} \}$. The delooping level of an algebra $A$ is then defined as the maximum of the delooping levels of the simple $A$-modules, and one of the central results of \cite{G} is that the delooping level of an algebra can be used to give an upper bound of the finitistic injective dimension of the algebra.
Thus, it is important to understand the delooping level of simple modules.
In \cite[Remark 3.9]{R}, Ringel raised the question whether we have $e(S)=\del (S)$ for every simple $A$-module $S$ in a Nakayama algebra.
The methods in our article allow us to give a positive answer to this question.
\begin{theorem}(Theorem \ref{thm::del(S)=e(S)})
\label{thm:e(S)=del(S)_sec1}
Let $A$ be a Nakayama algebra with a simple module $S$. Then $e(S)=\del (S).$
\end{theorem}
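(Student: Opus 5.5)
The plan is to prove the two inequalities $\del(S)\le e(S)$ and $\del(S)\ge e(S)$ separately. Throughout, we use that over a Nakayama algebra every indecomposable module is uniserial. Hence every syzygy $\Omega^k(M)$ of an indecomposable $M$ is either zero or indecomposable uniserial, and $\Omega(M)\cong \operatorname{rad}^{\ell(M)}P(\top M)$. Recall that "$N$ is a $(t+1)$-th syzygy" means that $N$ is, up to projective summands, a direct summand of $\Omega^{t+1}(M)$ for some module $M$. Finiteness of $e(S)$ is Ringel's result quoted in the introduction.

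\emph{Upper bound.} Let $t=e(S)$. If $t=\pdim S$, then $\Omega^t(S)$ is projective, hence trivially a $(t+1)$-th syzygy, so $\del(S)\le t$. Otherwise $t=\pdim I(S)<\pdim S$, and we use the exact sequence $0\to S\to I(S)\to I(S)/S\to 0$. Applying the horseshoe lemma $t$ times gives an exact sequence
$$0\to \Omega^t(S)\to \Omega^t(I(S))\oplus Q\to \Omega^t(I(S)/S)\to 0$$
with $Q$ projective, and here $\Omega^t(I(S))$ is projective. By Schanuel's lemma, $\Omega^t(S)\oplus(\text{proj})\cong\Omega^{t+1}(I(S)/S)\oplus(\text{proj})$. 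Thus $\Omega^t(S)$ is a $(t+1)$-th syzygy and $\del(S)\le t$. This direction is routine.

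\emph{Lower bound.} Suppose $s<e(S)$ and, for a contradiction, that $\Omega^s(S)$ is an $(s+1)$-th syzygy. Since $s<\pdim S$, the module $\Omega^s(S)$ is indecomposable and nonprojective. By Krull--Schmidt it is therefore isomorphic to $\Omega^{s+1}(X)=\Omega^s(\Omega X)$ for some indecomposable $X$. The key tool is a cancellation lemma for uniserial modules, proved from the formula $\Omega(Y)=\operatorname{rad}^{\ell(Y)}P(\top Y)$:
\begin{quote}
If $Y,Z$ are indecomposable nonprojective and $\Omega Y\cong\Omega Z\neq 0$, then $\top Y=\top Z$ and $\ell(Y)=\ell(Z)$, so $Y\cong Z$.
\end{quote}
Note that $Y$ and $Z$ are then both quotients of the same $P(\top Y)$ with the same kernel.

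We apply this lemma to the pair $\Omega^{k}(S)$, $\Omega^{k}(\Omega X)$ for $k=s-1,s-2,\dots$, descending as long as the modules involved are nonprojective. Nonprojectivity of $\Omega^k(S)$ holds because $k<\pdim S$. If $\Omega^k(\Omega X)$ stays nonprojective all the way down, we obtain $S\cong\Omega X$. Then the simple $S$ is a submodule of a projective with quotient $X$, and comparing with $I(S)$ we will show that this forces the injective side to terminate: $\pdim I(S)\le s$, contradicting $s<e(S)$.

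If instead the descent stops at some level because $\Omega^k(\Omega X)$ becomes projective, we reach a configuration where $\Omega^{k+1}(S)$ is a syzygy of a projective-injective-related quotient. We then plan to track this through the parity bookkeeping in Ringel's definition of $N(S)$, namely $N(S)=S$ or $I(S)$ according to the parity of $e(S)$, to again derive $\pdim S\le s$ or $\pdim I(S)\le s$.

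The main obstacle is exactly this last step: showing rigorously that the identification $S\cong\Omega X$, or the early termination of the descent, pins down the injective envelope and forces $\pdim I(S)\le s$. My first attempt will be to embed $\Omega X\subseteq P(\top X)$ and use that in a Nakayama algebra $P(\top X)$ embeds in the injective $I(\soc P(\top X))$, which must be $I(S)$. I will then compare the syzygy sequences of $I(S)$ and of $X$ term by term. If this direct approach becomes messy, I will instead express both quantities via the Coxeter/Bruhat data developed later in the paper, in the same way Theorem \ref{thm::h=Coxeter_perm} identifies $h$.
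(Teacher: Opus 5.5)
Your upper bound is correct; it is the standard inequality $\del(S)\le e(S)$, which the paper simply quotes from \cite[3.9]{R}. The lower bound, however, has a genuine gap, and it is not where you locate it: your cancellation lemma is false. In the path algebra of $1\to 2\to 3$ without relations, $Y=P(1)/S_3$ and $Z=S_2$ are indecomposable and nonprojective with $\Omega Y\cong\Omega Z\cong S_3$, yet $\top Y\neq\top Z$.

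The mistake is reading off $\top Y$ from $\operatorname{rad}^{\ell(Y)}P(\top Y)$: different indecomposable projectives can have isomorphic radical powers (here $\operatorname{rad}P(1)\cong P(2)$). In the interval notation of Section \ref{subsec::Nakayama_notation}, $\Omega[a,b]=[b+1,f(a)-1]$ remembers only $b$ and $f(a)$, and $f$ is not injective.

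Restricting to the modules that occur in your descent does not rescue it in general. For the linear Nakayama algebra with Kupisch series $(3,3,2,3,2,1)$, the modules $\Omega(S_1)=[2,3]$ and $\Omega([1,2])=[3,3]$ are non-isomorphic nonprojective first syzygies with $\Omega^2(S_1)\cong S_4\cong\Omega^2([1,2])$. So from $\Omega^s(S)\cong\Omega^s(\Omega X)$ you cannot descend to $\Omega^{s-1}(S)\cong\Omega^{s-1}(\Omega X)$.

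The steps you flag as the main obstacle are actually harmless:
\begin{itemize}
\item The descent can never stop early. Since $k+1\le s<\pdim S$, we have $\Omega^{k+1}(\Omega X)\cong\Omega^{k+1}(S)\neq 0$, which already forces $\Omega^{k}(\Omega X)$ to be nonprojective.
\item If $S\cong\Omega X$, then $S$ is the socle of the indecomposable projective $P(\top X)$. Over a Nakayama algebra the injective envelope of this projective, namely $I(S)$, is projective, so $e(S)=0$, a contradiction.
\end{itemize}
Hence the whole lower bound rests on the false lemma, and a correct proof needs a genuinely different input.

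The paper never analyses when $\Omega^s(S)$ is an $(s+1)$-th syzygy. Instead it uses G\'elinas' criterion (Lemma \ref{lem:dell_lowerbound}) and constructs a witness $N$ with $\idim N=e(S)$ and $\Ext^{e(S)}(S,N)\neq 0$. This witness is $N=\Omega^{-1}(M)$, where $M$ is the module of Lemma \ref{lem::existanceOfM}: it has $\idim M=e(S)-1$ and a minimal injective coresolution ending in $I(S)$. Building $M$ is where the interval inequalities comparing the projective resolutions of $S$ and $I(S)$ are needed. The cases $e(S)\in\{0,1\}$ are treated directly.

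Your Coxeter fallback cannot replace this. The theorem covers cyclic Nakayama algebras, which may have infinite global dimension and then have no Coxeter matrix in the paper's sense. Moreover, Theorem \ref{thm::h=Coxeter_perm} only identifies the permutation $\hat h$, not the integers $e(S)$ or $\del(S)$.
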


We emphasise that \Cref{thm:e(S)=del(S)_sec1} holds for every Nakayama algebra, not only for the linear ones.

In Theorems \ref{thm:linearNakayama_AGiffC=PU_sec1} and \ref{thm:h=Coxeter_perm_sec1}, we only dealt with linear Nakayama algebras, which always have finite global dimension; thus, only the Auslander regular condition was relevant to us.
An important feature of general Auslander-Gorenstein algebras is that they come with a nice homological bijection on their simple modules, called \textcolor{blue}{\emph{Iyama's grade bijection $\phi$}}, introduced in \cite{I}.
We remark that in \cite{KMT} it was shown that Iyama's grade bijection coincides with the bijection studied by Auslander and Reiten in \cite{AR} on the index set of the simple modules, which has a slightly simpler description. We refer to the preliminaries for details.
\begin{theorem} (Theorem \ref{RingelsPerm=ARPerm})
Let $A$ be an Auslander-Gorenstein Nakayama algebra. Then Ringel's homological bijection coincides with Iyama's grade bijection.
\end{theorem}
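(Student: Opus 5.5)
The plan is to compare both bijections through minimal projective resolutions of indecomposable injective modules. First I would rewrite Iyama's grade bijection, as recalled in the preliminaries following \cite{KMT} and \cite{AR}, in its Auslander--Reiten form. For an Auslander--Gorenstein algebra $A$, the last nonzero term of the minimal injective coresolution of an indecomposable projective $P(i)$ is an indecomposable injective $I(j)$. Dually, the last nonzero term of the minimal projective resolution of $I(j)$ is $P(i)$. Since over a Nakayama algebra all syzygies of uniserial modules are uniserial, this says exactly that $\Omega^{d}(I(j))\cong P(i)$ with $d=\pdim I(j)<\infty$. So the goal becomes: for every simple $S$, the module $\Omega^{e(S)}(N(S))$ is the indecomposable projective attached to an indecomposable injective in this way, and the resulting index assignment agrees with the Auslander--Reiten permutation (or its inverse, depending on the convention fixed in the preliminaries).

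\textbf{Case $e(S)$ even.} Here $N(S)=I(S)$. If $e(S)=\pdim I(S)$, then $\Omega^{e(S)}(I(S))$ is the last term of the minimal projective resolution of $I(S)$. This is an indecomposable projective $P(k)$, and $h(S)=\top P(k)=S_k$ matches the Auslander--Reiten permutation directly. If instead $e(S)=\pdim S<\pdim I(S)$, I must show this cannot occur with $e(S)$ even, or that it still gives the same answer. For this I would use the interlocking of syzygies over Nakayama algebras. Write $L=I(S)$. Then $\Omega^1(L)$ and $\Omega^1(S)=\mathrm{rad}\,P(S)$ are uniserial submodules of projectives with comparable tops. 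Inductively, $\Omega^{m}(S)$ and $\Omega^{m}(L)$ become comparable in a parity-dependent way: one is a submodule of the other inside the same indecomposable projective, for even, respectively odd, $m$. This is Ringel's observation in \cite{R} that the two resolutions alternate. Using it, I would show that the first index at which one of them becomes projective has the parity that defines $N(S)$. Hence $\Omega^{e(S)}(N(S))$ is always the terminal projective of a resolution of finite length.

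\textbf{Case $e(S)$ odd.} Here $N(S)=S$ and $\Omega^{e(S)}(S)$ is projective. To translate this into a statement about injectives, I would take $I'$ to be the indecomposable injective that is the longest uniserial quotient of $P(S)$ if $P(S)$ is not injective (and treat $P(S)$ injective separately, which gives $e(S)$ small and can be checked directly). Then $\Omega^1(S)$ and $\Omega^1(I')$ sit inside $P(S)$ and $\Omega^{m}(S)$, $\Omega^m(I')$ eventually coincide up to a shift by one. This identifies $\Omega^{e(S)}(S)$ with the terminal syzygy of $I'$, and $\top I'=S$ gives the correct index. Here the Gorenstein hypothesis enters: projective dimensions of injectives are finite and equal the injective dimensions of projectives, so the terminal projective is well defined and indecomposable.

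\textbf{Main obstacle.} The hard step is this parity and syzygy bookkeeping, i.e.\ proving that the minimum in $e(S)$ is attained on the side prescribed by its parity. For this I would first try induction on $e(S)$ using the Gustafson-type description $\Omega^2(S_i)=\Omega^1$ of a shorter uniserial. After that, one checks on both sides that the assignment $S\mapsto h(S)$ equals the index map of the Auslander--Reiten permutation, which is bijective by \cite{I}.
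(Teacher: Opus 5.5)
Your reduction to the Auslander--Reiten form and your even case are fine: that $e(S)=\pdim S<\pdim I(S)$ cannot happen for even $e(S)$ is Madsen's lemma, Lemma~\ref{lem::e(S)odd_or_even}. The odd case, however, has a genuine gap. The precise target is $\Omega^{e(S)}(N(S))\cong\Omega^{\pdim I(S)}(I(S))$, where $I(S)$ is the injective \emph{envelope} of $S$. The Auslander--Reiten permutation indexes $I(i)$ by its socle, not its top.

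Your $I'$ is an injective with \emph{top} $S$. So even if its terminal syzygy agreed with $\Omega^{e(S)}(S)$, you would get $h(S_i)=S_{\hat\psi(m)}$ with $\soc I'=S_m$, which is the wrong value whenever $m\neq i$. Worse, $I'$ need not exist. For $A=K(1\to 2\to 3)$, which is hereditary and hence Auslander regular, we have $e(S_2)=\pdim S_2=1$ and $P(2)=[2,3]$ is not injective. Yet every indecomposable injective ($[1]$, $[1,2]$, $[1,3]$) has top $S_1$. The claims that $P(S)$ injective forces $e(S)$ to be small, and that $\Omega^m(S)$ and $\Omega^m(I')$ eventually agree up to a shift, are likewise unsupported.

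The missing idea is where the Auslander--Gorenstein hypothesis really enters. You use it only to make terminal syzygies finite and indecomposable, and your parity bookkeeping (Madsen's lemma) holds for every Nakayama algebra. What is actually needed is $\pdim S=\pdim I(S)$ whenever $e(S)$ is odd. The paper obtains this in two steps:
\begin{enumerate}[(i)]
\item $\pdim I(S)=\grade S\le\pdim S$ by Theorem~\ref{AGcondition}; this also gives the even case directly, since $e(S)=\pdim I(S)$ always.
\item Combined with Madsen's $\pdim S\le\pdim I(S)$ for odd $\pdim I(S)$, this forces $\pdim S=\pdim I(S)$ (Proposition~\ref{oddSimplePerfect}).
\end{enumerate}
Next, $\Omega^1 S_i$ and $\Omega^1 I(i)$ both have top $S_{i+1}$. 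Hence the odd-indexed terms of the minimal projective resolutions of $S_i$ and $I(i)$ coincide; this is the equality $P_{2r+1}=R_{2r+1}$ in the proof of Lemma~\ref{lem::existanceOfM}, i.e.\ Ringel's remark. So the companion of $S_i$ must be $I(i)$, not an injective with top $S_i$. It follows that $\Omega^{e(S)}S\cong\Omega^{e(S)}I(S)$, and Theorem~\ref{thm::gradebij=ARbij} gives $h(S)=\phi(S)$.

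Without the grade input the odd case genuinely breaks. For the algebra of Example~\ref{ex::Nakayama}, which is not Auslander--Gorenstein, we have $e(S_2)=\pdim S_2=1<2=\pdim I(2)$ and $h(S_2)=S_3$, but $\top\Omega^2 I(2)=S_5$. Nothing in your argument rules out this situation.
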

We remark that while our main result gives a linear algebraic classification of acyclic Auslander regular monomial algebras, an explicit combinatorial enumeration is not yet known, even in the case of linear Nakayama algebras. In forthcoming work, we will report on some progress regarding this problem.
\section{Preliminaries}
We always assume that our algebras are finite dimensional over a field $K$. Modules are finitely generated right modules unless otherwise stated. The functor $D=\Hom_K(-,K)$ denotes the natural duality of a finite dimensional algebra. 
If $A$ is a finite dimensional algebra with $n$ simple modules, then $\{e_1,\ldots,e_n\}$ denotes a complete set of primitive, pairwise orthogonal idempotents of $A$. 
When $A$ is a quiver algebra, the vertices of the underlying quiver are labelled by the set $\{1,\ldots,n\}.$
The indecomposable projective, indecomposable injective, and simple $A$-modules corresponding to $e_i$ are denoted by $P(i)\cong e_iA$, $I(i)\cong D(Ae_i)$, and $S_i \cong \top P(i)\cong \soc I(i)$, respectively. 
We refer, for example, to \cite{ASS} and \cite{ARS} for introductions to the representation theory of finite dimensional algebras. 

We first phrase the definition of the Auslander-Gorenstein property in the finite dimensional setting, and then recall an important result about Auslander-Gorenstein algebras that we will need in the following.

\begin{definition}
    \label{def::AG}
    Let $A$ be a finite dimensional $K$-algebra and consider a minimal injective coresolution of the regular representation $A_A:$
    $$0 \rightarrow A_A \rightarrow I^0 \rightarrow I^1 \rightarrow I^2 \rightarrow \cdots.$$
    Then $A$ is called 
    \begin{enumerate}[\rm (i)]
        \item \textcolor{blue}{\textit{n-Gorenstein}} for an $n\ge 1$ if $\pdim I^i\le i$ for all $0\le i < n.$ 
        \item \textcolor{blue}{\textit{Auslander-Gorenstein}} if $\idim (A_A)<\infty$ and $A$ is $n$-Gorenstein for all $n\ge 1.$
        \item \textcolor{blue}{\textit{Auslander regular}} if it is Auslander-Gorenstein and has finite global dimension. 
    \end{enumerate}
\end{definition}

\begin{theorem}\cite[Theorem 3.3]{KMT}\label{AGcondition}
Let $A$ be a finite dimensional $K$-algebra. Then the following are equivalent:
\begin{enumerate}[\rm (i)]
    \item $A$ is Auslander-Gorenstein.
    \item For every simple $A$-module $S$ the following holds: $\grade(S)=\pdim I(S)<\infty.$
\end{enumerate}
\end{theorem}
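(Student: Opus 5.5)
The plan is to translate both conditions into statements about where the indecomposable injectives $I(S)$ occur in the minimal injective coresolution $0\to A_A\to I^0\to I^1\to\cdots$. The basic observation is that, because the coresolution is minimal, the complex $\Hom_A(S,I^\bullet)$ has zero differentials. Hence $\Ext^i_A(S,A)\cong \Hom_A(S,I^i)$, and this is nonzero if and only if $I(S)$ is a direct summand of $I^i$. Consequently $\grade(S)=\min\{i \mid I(S)\mid I^i\}$. Every later step rests on this dictionary.

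\emph{(ii) $\Rightarrow$ (i).} Let $I(S)$ be an indecomposable summand of $I^i$. Then $\Ext^i(S,A)\neq 0$, so $\grade(S)\le i$, and (ii) gives $\pdim I(S)=\grade(S)\le i$. Hence $\pdim I^i\le i$ for every $i$, so $A$ is $n$-Gorenstein for all $n$. It remains to show $\idim A_A<\infty$. Set $m=\max_S \pdim I(S)<\infty$. Then $\pdim D(A)\le m$, so $\idim {}_AA\le m$. To pass from the left side to the right side, I would invoke the left–right symmetry of the condition ``$n$-Gorenstein for all $n$'' (Auslander's theorem, via the Auslander condition on Ext-submodules). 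This symmetry yields the left-module analogue of the previous paragraph, hence finite projective dimension of all left injectives, i.e. $\idim A_A<\infty$. Zaks' lemma then shows that the two self-injective dimensions coincide.

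\emph{(i) $\Rightarrow$ (ii).} Assume $A$ is Auslander-Gorenstein and put $d=\idim A_A=\idim {}_AA$. By the dictionary, $I(S)$ is a summand of $I^{g}$ with $g=\grade(S)\le d<\infty$, so $\pdim I(S)\le \pdim I^g\le g$. For the reverse inequality, let $p=\pdim I(S)$, which is finite since $A$ is Gorenstein. A module of finite projective dimension $p$ satisfies $\Ext^p(I(S),A)\neq 0$, because the last map in a minimal projective resolution is not split after applying $\Hom(-,A)$. I then want to show $p\ge g$. My approach is to compute $\Ext^p(I(S),A)$ using $\Hom(I(S),I^\bullet)$ and use the Auslander condition. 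This condition says every submodule of the left module $\Ext^p(I(S),A)$ has grade at least $p$, and that restricts which $I^j$ can contribute. In particular it should force an indecomposable injective that contributes in degree $p$ to be the one whose socle is $S$ itself. An alternative I would try in parallel is to show that each indecomposable injective appears in exactly one term $I^i$ of the coresolution, and that then $\pdim I(S)$ equals that index.

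\emph{Main obstacle.} I expect the lower bound $\pdim I(S)\ge \grade(S)$ in (i) $\Rightarrow$ (ii) to be the hardest step. The upper bound and the whole of (ii) $\Rightarrow$ (i), apart from the left–right symmetry which I would quote, follow formally from the dictionary. The lower bound, by contrast, needs the full strength of the Auslander condition, converted into information about the precise position of $I(S)$ in the minimal coresolution.
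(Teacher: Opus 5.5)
The paper only quotes this result from \cite{KMT} and gives no proof, so I assess your argument on its own. Your dictionary $\Ext^i_A(S,A)\cong\Hom_A(S,I^i)$ is correct, and so are its formal consequences. Assuming (ii), you get $\pdim I^i\le i$ for all $i$ and $\idim {}_AA\le m$. Assuming (i), you get $\pdim I(S)\le\grade(S)$, provided $\grade(S)<\infty$.

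\textbf{The main gap.} The essential inequality $\grade(S)\le\pdim I(S)$ in (i)$\Rightarrow$(ii) is not established. This inequality is also what gives the finiteness of $\grade(S)$, which you use without proof. Neither of your two suggested routes works as stated.
\begin{itemize}
\item From $\Ext^p_A(I(S),A)\neq 0$ you only learn $\Hom_A(I(S),I^p)\neq 0$. That means some $I(S')$, with $S'$ a composition factor of $I(S)$, occurs in $I^p$. You give no mechanism by which the grade condition on submodules of $\Ext^p_A(I(S),A)$ would force $S'=S$.
\item Your alternative claim is false. Take the linear Nakayama algebra $1\to 2\to 3$ with radical square zero, which is Auslander regular of global dimension $2$. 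Here $I(2)=P(1)$ lies in $I^0$. The coresolution $0\to P(3)\to I(3)\to I(2)\to I(1)\to 0$ puts $I(2)$ into $I^1$ as well.
\end{itemize}

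\textbf{A second gap, in (ii)$\Rightarrow$(i).} Auslander's left--right symmetry only gives $\pdim J^i\le i$ for the terms $J^i$ of the minimal coresolution of ${}_AA$. Together with $\idim {}_AA\le m$, this bounds the projective dimension of the left injectives that actually occur in that coresolution. It says nothing about the others, so ``finite projective dimension of all left injectives'' does not follow.

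\textbf{The missing idea.} Both gaps close with the Auslander--Reiten bijection \cite[Proposition 5.4]{AR}; the paper itself uses it in the proof of Theorem \ref{thm:delS=gradeS}.
\begin{enumerate}
\item Let $P$ be indecomposable projective with $t=\idim P<\infty$, and minimal coresolution $0\to P\to I^0_P\to\cdots\to I^t_P\to 0$. This is a summand of the coresolution of $A_A$, so $\pdim I^i_P\le i$.
\item For every indecomposable summand $E$ of $I^t_P$ we have $\Ext^t_A(E,P)\cong\Ext^1_A(E,\Omega^{-(t-1)}P)\neq 0$. Otherwise the sequence $0\to\Omega^{-(t-1)}P\to I^{t-1}_P\to I^t_P\to 0$ would split over $E$, contradicting minimality. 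Hence $\pdim E=t$.
\item Replace each $I^i_P$ with $i<t$ by a projective resolution of length at most $i$. This gives a projective resolution of $I^t_P$ of length $t$ whose $t$-th term is $P$. So the minimal resolution of $I^t_P$ ends in a summand of $P$, to which every $E$ contributes nontrivially. Since $P$ is indecomposable, $I^t_P$ is indecomposable and $\Omega^t I^t_P\cong P$.
\item Therefore $P\mapsto I^t_P$ is injective, hence bijective by counting.
\end{enumerate}
For (i)$\Rightarrow$(ii), choose $P$ with $I^t_P\cong I(S)$. Then $t=\pdim I(S)$ and $I(S)$ is a summand of $I^{\pdim I(S)}$, which gives $\grade(S)\le\pdim I(S)<\infty$.

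For (ii)$\Rightarrow$(i), run the same argument for $A^{op}$. This is legitimate because $\idim{}_AA\le m$ and $A^{op}$ is $n$-Gorenstein for all $n$. It shows every indecomposable left injective arises as such a last term, so it has projective dimension at most $m$. Hence $\idim A_A=\pdim {}_AD(A_A)\le m$.
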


Here, $\grade (M)=\inf \{i \geq 0 \mid \Ext_A^i(M,A) \neq 0 \}$ is the \textcolor{blue}{\textit{grade}} of a module $M$. 
The following proposition describes an easy way to calculate Ext-groups involving simple modules. For a proof, see \cite[Corollary 2.5.4]{Ben}.
\begin{lemma} \label{Bensonlemma}
Let $A$ be a finite dimensional algebra, $N$ be an indecomposable $A$-module and $S$ a simple $A$-module. Let $(P_i)$ be a minimal projective resolution of $N$ and $(I_i)$ a minimal injective coresolution of $N$. 
\begin{enumerate}[\rm (1)]
\item For $\ell \geq 0$, $\Ext_A^{\ell}(N,S) \neq 0$ iff $S$ is a quotient of $P_{\ell}$.  \item For $\ell \geq 0$, $\Ext_A^{\ell}(S,N) \neq 0$ iff $S$ is a submodule of $I_{\ell}$. \end{enumerate} 
\end{lemma}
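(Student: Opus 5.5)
We prove Lemma \ref{Bensonlemma} by reducing both statements to the minimality of the resolutions, using that $S$ is simple. For (1), let $\cdots \to P_{\ell+1} \xrightarrow{d_{\ell+1}} P_\ell \xrightarrow{d_\ell} P_{\ell-1} \to \cdots \to P_0 \to N \to 0$ be a minimal projective resolution. Then $\Ext_A^\ell(N,S)$ is the $\ell$-th cohomology of the complex $\Hom_A(P_\bullet,S)$. The key claim is that all differentials $\Hom_A(d_{\ell+1},S)\colon \Hom_A(P_\ell,S)\to\Hom_A(P_{\ell+1},S)$ vanish. Given this, $\Ext_A^\ell(N,S)\cong \Hom_A(P_\ell,S)$, which is nonzero if and only if $S$ is a quotient of $P_\ell$, i.e.\ a direct summand of $\top P_\ell$.

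To prove the claim, take $f\colon P_\ell\to S$. Since $S$ is simple, $f$ vanishes on $\operatorname{rad} P_\ell = P_\ell J$, where $J$ is the Jacobson radical. Minimality of the resolution means that each $P_{\ell+1}\to \im d_{\ell+1}$ is a projective cover. Equivalently, $\im d_{\ell+1}=\ker d_\ell \subseteq \operatorname{rad} P_\ell$, because $P_\ell \to \im d_\ell$ is a projective cover, so its kernel is superfluous and hence lies in the radical. Therefore $f\circ d_{\ell+1}=0$. The case $\ell=0$ is covered by the same argument, with $d_0$ the augmentation $P_0\to N$, whose kernel lies in $\operatorname{rad} P_0$.

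For (2) we argue dually. Let $0\to N\to I_0\xrightarrow{\partial_0} I_1\to\cdots$ be a minimal injective coresolution. Then $\Ext^\ell_A(S,N)$ is the cohomology of $\Hom_A(S,I_\bullet)$. Any $g\colon S\to I_\ell$ has image in $\soc I_\ell$, since $S$ is simple. Minimality means that each $I_\ell$ is an injective envelope of $\im \partial_{\ell-1}$ (and $I_0$ of $N$), so $\soc I_\ell\subseteq \im\partial_{\ell-1}=\ker\partial_\ell$. Hence $\partial_\ell\circ g=0$. Thus all differentials of $\Hom_A(S,I_\bullet)$ vanish, and $\Ext^\ell_A(S,N)\cong\Hom_A(S,I_\ell)$, which is nonzero if and only if $S$ embeds into $I_\ell$. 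Alternatively, (2) follows from (1) by applying the duality $D$, which turns minimal injective coresolutions of $N$ into minimal projective resolutions of $DN$ over $A^{op}$.

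The only real point, and the step most likely to need care, is the translation of "minimal" into "kernels lie in the radical" (respectively "socles lie in the images"). This is the standard characterisation of projective covers via superfluous kernels and of injective envelopes via essential submodules, and we intend to invoke it directly from \cite{ASS}. Indecomposability of $N$ is not needed for the argument.
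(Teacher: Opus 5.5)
Your proof is correct. Minimality gives $\ker d_\ell\subseteq \operatorname{rad}P_\ell$ and $\soc I_\ell\subseteq \im\partial_{\ell-1}$, so all differentials of $\Hom_A(P_\bullet,S)$ and $\Hom_A(S,I_\bullet)$ vanish; this is the standard argument. The paper gives no proof of its own and simply cites \cite[Corollary 2.5.4]{Ben}, which obtains the lemma in the same way from the minimality of the resolutions.
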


Next, we recall the definition of the delooping level of a module $M,$ which was introduced by G\'elinas in \cite{G}. The \textcolor{blue}{\textit{delooping level}} of a module $M$ is defined as $\textcolor{blue}{\del (M)}:= \inf \{ t \geq 0 \mid \Omega^t(M) \ \text{is a (t+1)-th syzygy module} \}$, and the delooping level of an algebra $A$ as the supremum of the delooping levels of the simple $A$-modules. 
The following result will be useful for the proof of \Cref{thm::del(S)=e(S)}.

\begin{lemma}\cite[Proposition 2.2 (G\'elinas)]{R}
\label{lem:dell_lowerbound}
    Let $A$ be a finite dimensional algebra and $M$ a finitely generated $A$-module. If there is a (not necessarily finitely generated) module $N$ with finite injective dimension $d\ge 1$ such that $\Ext^d(M, N )\neq 0$, then $d \le \del (M)$.
\end{lemma}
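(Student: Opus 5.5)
The plan is to argue by contradiction, using dimension shifting along syzygies. If $\del(M)=\infty$ there is nothing to prove. So suppose $t:=\del(M)<\infty$ and, for contradiction, $t<d$, i.e.\ $d-t\ge 1$. By definition of the delooping level, $\Omega^t(M)$ is a $(t+1)$-th syzygy module. This means there is a finitely generated module $X$ with
$$\Omega^t(M)\oplus P \cong \Omega^{t+1}(X)\oplus P'$$
for some projective modules $P,P'$. We allow projective summands on both sides, since syzygies are only defined up to projective summands.

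The first step is the standard dimension-shifting isomorphism
$$\Ext^k_A(\Omega Y,N)\cong \Ext^{k+1}_A(Y,N)\quad\text{for all } k\ge 1.$$
It holds for any module $N$, finitely generated or not. To see this, apply $\Hom_A(-,N)$ to $0\to \Omega Y\to P_0\to Y\to 0$ and use $\Ext^{\ge 1}_A(P_0,N)=0$. Iterating it gives $\Ext^d_A(M,N)\cong \Ext^{d-t}_A(\Omega^t M,N)$, which is legitimate because $d-t\ge 1$.

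The second step uses additivity of $\Ext$ together with the vanishing of $\Ext^{\ge1}_A$ from projectives. This removes the projective summands:
$$\Ext^{d-t}_A(\Omega^tM,N)\cong \Ext^{d-t}_A(\Omega^{t+1}X,N).$$
Dimension shifting again, now $t+1$ times (all degrees involved are $\ge 1$), gives
$$\Ext^{d-t}_A(\Omega^{t+1}X,N)\cong \Ext^{d+1}_A(X,N).$$
This group vanishes because $\idim N=d$. Hence $\Ext^d_A(M,N)=0$, contradicting the hypothesis, and so $d\le \del(M)$.

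The only point requiring care is the bookkeeping of degrees. The shift through $\Omega^t$ needs the degree $d-t$ to be positive, which is exactly where the assumption $t<d$ enters. Removing the projective summands likewise needs the degree to be at least $1$, which is also where $d\ge 1$ is used. Nothing about $N$ being finitely generated is needed, since all isomorphisms come from long exact sequences in the first variable.
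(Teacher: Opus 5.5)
The paper does not prove this lemma itself; it quotes it from Ringel, who credits it to G\'elinas. The standard proof is exactly your dimension shift $\Ext^d_A(M,N)\cong\Ext^{d-t}_A(\Omega^tM,N)$ followed by $\Ext^{d-t}_A(\Omega^{t+1}X,N)\cong\Ext^{d+1}_A(X,N)=0$, so your proposal is correct and follows the same route. One adjustment: in G\'elinas's definition, ``$\Omega^tM$ is a $(t+1)$-th syzygy module'' means $\Omega^tM$ is a stable retract of some $\Omega^{t+1}X$, i.e.\ $\Omega^tM\oplus Z\cong\Omega^{t+1}X\oplus P'$ for some module $Z$ and projective $P'$, not a stable isomorphism. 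Your argument still goes through, since $\Ext^{d-t}_A(\Omega^tM,N)$ is then a direct summand of $\Ext^{d-t}_A(\Omega^{t+1}X\oplus P',N)\cong\Ext^{d+1}_A(X,N)=0$. Also, $d-t\ge 1$ comes from $t<d$ alone, so the hypothesis $d\ge 1$ plays no role; the case $d=0$ is trivial.
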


\subsection{Permutations on simple modules}\label{subsec::permutations_on_simples}
In the following, we recall various permutations introduced on the simple modules of finite dimensional algebras. 
For this, we recall the classical Bruhat decomposition of an invertible matrix. 
For a proof and an algorithm, see, for example, \cite[Chapter 4]{AB} and \cite{OOV}.
\begin{theorem}
Let $M$ be an invertible real $n \times n$ matrix. Then there exists a factorisation $M=U_1PU_2$, where $P$ is a permutation matrix, and $U_1$ and $U_2$ are upper triangular matrices. 
The permutation matrix $P$ is uniquely determined by $M$.
\end{theorem}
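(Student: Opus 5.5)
This is the classical Bruhat decomposition over a field; the plan is Gaussian elimination for existence and a rank-counting invariant for uniqueness of $P$.

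\emph{Existence.} I reduce $M$ to a permutation matrix using only two kinds of operations:
\begin{itemize}
\item right multiplication by invertible upper triangular matrices, i.e.\ adding multiples of a column to columns further to the right, and rescaling columns;
\item left multiplication by invertible upper triangular matrices, i.e.\ adding multiples of a row to rows above it, and rescaling rows.
\end{itemize}
The procedure is as follows.
\begin{enumerate}
\item Process the rows from the bottom row $n$ upwards.
\item In the current row $i$, choose the leftmost nonzero entry, say in column $j$. Such an entry exists because the matrix stays invertible throughout.
\item Rescale this entry to $1$.
\item Use column operations (column $j$ added to columns $k>j$) to clear the entries of row $i$ to the right of position $(i,j)$.
\item Use row operations (row $i$ added to rows $r<i$) to clear the entries of column $j$ above position $(i,j)$.
\end{enumerate}
These moves do not disturb the rows already processed below $i$. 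Each of those rows has zero in column $j$: its own pivot column has been cleared in all rows above it, including row $i$. Hence adding multiples of column $j$ leaves those rows unchanged.

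After all $n$ rows are processed, we have $U M V=P$ with $U,V$ invertible upper triangular. Therefore $M=U^{-1}PV^{-1}$, and $U^{-1}$, $V^{-1}$ are again upper triangular.

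\emph{Uniqueness of $P$.} I use the invariant
$$r_{ij}(M)=\operatorname{rank}\bigl(\text{submatrix of } M \text{ on rows } i,\dots,n \text{ and columns } 1,\dots,j\bigr).$$
This invariant is unchanged under $M\mapsto U_1 M U_2$ with $U_1,U_2$ invertible upper triangular, for the following reasons.
\begin{itemize}
\item Left multiplication by an upper triangular matrix replaces each row by a combination of itself and the rows below it. So the bottom-row block $\{i,\dots,n\}$ is transformed by an invertible triangular block.
\item Symmetrically, right multiplication by an upper triangular matrix makes the left column block $\{1,\dots,j\}$ depend only on itself, again via an invertible block.
\end{itemize}
Consequently $r_{ij}(M)=r_{ij}(P)$. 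For a permutation matrix, $r_{ij}(P)$ is the number of $k\ge i$ with $\sigma(k)\le j$. These counts determine $\sigma$ via successive differences in $i$ and $j$.

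\emph{Main obstacle.} The only delicate point is the bookkeeping in the existence step: one must check that the left/right operations stay upper triangular and preserve the cleared structure. The rank invariant makes the uniqueness part routine.
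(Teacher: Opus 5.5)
Your proof is correct. The paper contains no proof of this statement to compare against: it only cites \cite[Chapter 4]{AB} and \cite{OOV}. Your argument (upper-triangular elimination from the bottom row upward, plus the rank function $r_{ij}(M)$) is therefore a self-contained substitute for a citation, not a variant of anything in the text.

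What your route buys is on the uniqueness side. Your invariant is exactly the mechanism behind Proposition~\ref{coxeterpermformula}, which the paper adapts from \cite{Bru}. The truncated column $c_i(M,j)$ lies outside the span of $c_1(M,j),\dots,c_{i-1}(M,j)$ if and only if $r_{j,i}(M)>r_{j,i-1}(M)$. By invariance this is equivalent to $r_{j,i}(P)>r_{j,i-1}(P)$, i.e.\ to the $1$ in column $i$ of $P$ lying in a row $\geq j$. Taking the largest such $j$ gives the formula for $p_c(i)$, and the formula for $p_r$ follows symmetrically. So your argument yields both the theorem and the proposition the paper actually computes with, whereas the paper outsources both.

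Two small points should be stated explicitly.
\begin{enumerate}
\item \emph{Invertibility of the factors.} You prove invariance of $r_{ij}$ under \emph{invertible} upper triangular factors, but the statement allows arbitrary upper triangular $U_1,U_2$. Note that $\det M=\det U_1\det P\det U_2\neq 0$ forces $U_1,U_2$ to be invertible.
\item \emph{The final matrix is a permutation matrix.} After step 4 the processed row $i$ is the unit vector $e_j$: it has zeros left of $j$ by the leftmost choice and zeros right of $j$ by the clearing. Hence the row operations in step 5 change only column $j$. The column operations in step 4 have zero coefficient on every earlier pivot column, since row $i$ vanishes there. So earlier pivot columns stay cleared, and the end result has unit-vector rows with distinct pivot columns, which is a permutation matrix.
\end{enumerate}
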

This decomposition is called a \textcolor{blue}{\emph{Bruhat decomposition}} of $M$. 
Since the permutation matrix $P$ is uniquely determined by $M$, it makes sense to denote it by \textcolor{blue}{$P[M]$} in the following.
In general, uniqueness for $U_1$ and $U_2$ is far from being true. 
However, by assuming some additional conditions on $U_1$ and $U_2$, one can obtain a unique form of the Bruhat decomposition, we refer to \cite{OOV} for details.
Given an $n \times n$ permutation matrix $P$, we define the \textcolor{blue}{\textit{row-permutation $p_r$}} associated to $P$ as the permutation acting on $\{1,\ldots,n\}$ given by $p_r(i)=j$ if in the $i$-th row of $P$ the unique non-zero entry appears in column $j$. Dually, we define the \textcolor{blue}{\textit{column-permutation $p_c$}} associated to $P$ as the permutation acting on $\{1,\ldots,n\}$ given by $p_c(i)=j$ if in the $i$-th column of $P$ the unique non-zero entry is in row $j$. Note that $p_c$ is the inverse permutation 
of $p_r.$

Following \cite{KMT}, for an algebra $A$ of finite global dimension, we define the \textcolor{blue}{\textit{Coxeter permutation} $p_A$} as the column-permutation associated to the permutation matrix $P[C_A]$ from a Bruhat factorisation of the Coxeter matrix $C_A$. 
Note that when $A$ is an acyclic quiver algebra, then we can order its simple modules so that the Cartan matrix of $A$ is lower triangular. This happens exactly when the algebra is naturally labelled, as introduced in Sections \ref{sec::intro} and \ref{sec::auslander_regularity_and_Coxeter_matrix}. We will usually assume this for acyclic quiver algebras.
The following lemma is elementary:
\begin{lemma} \label{bruhatfromcartan}
Let $A$ be an algebra of finite global dimension with a lower triangular Cartan matrix $\omega_A$ and a Bruhat decomposition $\omega_A=\hat{U}_1 R \hat{U}_2$.
Then the Coxeter permutation $p_A$ coincides with the row-permutation associated to $R$.
\end{lemma}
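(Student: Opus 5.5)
We need: $C_A = -\omega^T\omega^{-1}$. Given $\omega = \hat U_1 R \hat U_2$, we want a Bruhat decomposition of $C_A$ and to read off its column permutation. The plan is to rewrite both factors of $C_A$ in Bruhat-compatible form and then combine them.

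\textbf{Step 1: the transpose.} Transposing the decomposition gives $\omega^T = \hat U_2^T R^T \hat U_1^T$, whose outer factors are lower triangular rather than upper triangular. To fix this, use the longest element $w_0$, the antidiagonal permutation matrix. Conjugation by $w_0$ turns lower triangular matrices into upper triangular ones. Since $\omega$ is lower triangular, $\omega^T$ is upper triangular, and this triangularity will be used directly below.

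\textbf{Step 2: the inverse.} Inverting gives $\omega^{-1} = \hat U_2^{-1} R^{-1} \hat U_1^{-1}$, which is again a Bruhat decomposition, now with permutation $R^{-1}=R^T$.

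\textbf{Step 3: combine.} We get
$$C_A = -\omega^T \omega^{-1} = -\omega^T \hat U_2^{-1} R^T \hat U_1^{-1}.$$
Here $\omega^T$ is upper triangular, invertible since $A$ has finite global dimension, and $\hat U_2^{-1}$ is upper triangular. So $U_1' := -\omega^T\hat U_2^{-1}$ is upper triangular and
$$C_A = U_1' \, R^T \, \hat U_1^{-1}$$
is a Bruhat decomposition. By uniqueness of the permutation matrix, $P[C_A]=R^T$.

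\textbf{Step 4: read off the permutation.} The column-permutation of $R^T$ equals the row-permutation of $R$: the entry in column $i$ of $R^T$ lies in row $j$ exactly when the entry in row $i$ of $R$ lies in column $j$. Hence $p_A$ equals the row-permutation of $R$, as claimed.

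The only point needing care is keeping the index conventions straight; the essential input is that lower triangularity of $\omega_A$ makes $\omega_A^T$ upper triangular. Steps 1 and 2 are routine, and the argument does not actually need $w_0$.
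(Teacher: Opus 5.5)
Your argument is correct and is essentially the paper's proof run in the opposite direction. The paper starts from a Bruhat decomposition $C_A=U_1PU_2$ and turns it into one of $\omega_A$ with permutation $P^T$; you start from $\omega_A=\hat U_1R\hat U_2$ and obtain $C_A=(-\omega_A^T\hat U_2^{-1})\,R^T\,\hat U_1^{-1}$, and both rest only on the upper triangularity of $\omega_A^T$ and the uniqueness of the permutation matrix (your Step 1 with $w_0$ is superfluous, as you note yourself).
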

\begin{proof}
If the Coxeter matrix $C_A$ has Bruhat decomposition
$C_A=- \omega_A^T \omega_A^{-1}=U_1 P U_2$, then we obtain a Bruhat decomposition of $\omega_A^{-1}$ as
$\omega_A^{-1}=(-(\omega_A^T)^{-1} U_1) P U_2$ because $(-(\omega_A^T)^{-1} U_1)$ is also upper triangular.
Thus, a Bruhat factorisation of $\omega_A$ is given by 
$U_2^{-1} P^{-1} (-(\omega_A^T)^{-1} U_1)^{-1}$, which yields the result since for a permutation matrix $P$ we have $P^T=P^{-1}$ and since the column-permutation of $P$ coincides with the row-permutation of $P^T.$
\end{proof}

The next proposition provides explicit formulas for the row- and column-permutations associated to $P[M]$, for an invertible $n\times n$ matrix $M$. We write $\textcolor{blue}{c_i(M,j)}$ to denote the $i$-th column of $M$ from row $j$ to row $n$, and \textcolor{blue}{$r_i(M,j)$} to denote the $i$-th row of $M$ from column $1$ to column $j.$ This proposition is obtained by a slight modification of the proof in \cite[Theorem 1.1.10]{Bru}.

\begin{proposition} \label{coxeterpermformula}
Let $M$ be an invertible real $n \times n$-matrix.
Then the column-permutation corresponding to $P[M]$ in the Bruhat decomposition of $M$ is given by
$$p_c(i)= \max \{ \ j \mid c_i(M,j) \text{ is not in the span of } \{ c_{1}(M,j), c_{2}(M,j), \cdots , c_{i-1}(M,j) \} \}.$$
Its inverse permutation, the row-permutation, can be computed as
$$p_r(i)= \min \{ \ j \mid r_i(M,j) \text{ is not in the span of } \{ r_{i+1}(M,j), r_{i+2}(M,j), \cdots , r_n(M,j) \} \}.$$
\end{proposition}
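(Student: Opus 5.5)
The plan is to read off the permutation directly from the factorisation $M=U_1PU_2$ by tracking rank conditions on submatrices. These ranks are invariant under the relevant one-sided triangular multiplications.

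\textbf{Column formula.} Write $M_{\ge j}$ for the submatrix of rows $j,\dots,n$ and let $r(j,i)$ be the rank of its first $i$ columns. Left multiplication by the invertible upper triangular matrix $U_1$ maps rows $j,\dots,n$ to combinations of rows $j,\dots,n$, and this restricted map is invertible, since the lower-right block of an invertible upper triangular matrix is invertible. Right multiplication by $U_2$ replaces column $i$ by a combination of columns $1,\dots,i$, with nonzero coefficient on column $i$. So the span of the first $i$ columns of $M_{\ge j}$ is unchanged. Consequently $r(j,i)$ is the same for $M$ and for $P$.

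For $P$, the quantity $r(j,i)$ equals the number of $k\le i$ with $p_c(k)\ge j$. Now $c_i(M,j)$ lies outside the span of $c_1(M,j),\dots,c_{i-1}(M,j)$ exactly when $r(j,i)-r(j,i-1)=1$. For $P$ this means $p_c(i)\ge j$. The maximal such $j$ is therefore $p_c(i)$, which gives the first formula.

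\textbf{Row formula.} The argument is symmetric. Let $s(i,j)$ be the rank of the submatrix formed by rows $i,\dots,n$ and columns $1,\dots,j$. This is again invariant: $U_1$ acts invertibly on the bottom rows, and $U_2$ acts invertibly on the leading columns, because the leading block of an upper triangular matrix is invertible.

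For $P$, the quantity $s(i,j)$ counts the $k\ge i$ with $p_r(k)\le j$. The condition that $r_i(M,j)$ is not in the span of the rows below it is $s(i,j)-s(i+1,j)=1$. For $P$ this means $p_r(i)\le j$, so the minimal such $j$ is $p_r(i)$.

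\textbf{Main obstacle.} The only delicate point is verifying carefully that the triangular factors preserve these spans (equivalently, these ranks) on exactly the right truncated blocks. It also has to be checked that "not in the span" translates into a rank jump of exactly one, and this holds because adding a single vector raises the rank by at most one.
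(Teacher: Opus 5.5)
Your proof is correct. Both formulas follow from the invariance of the ranks of the lower-left corner blocks (rows $j,\dots,n$, columns $1,\dots,i$), and you check the translation of ``not in the span'' into a rank jump of one and then into the position of the $1$ in column $i$ (resp. row $i$) of $P$ correctly. This includes the edge cases $i=1$ and $i=n$ and the non-emptiness of the index sets.

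The paper does not write out a proof of this proposition. It only says the formulas come from a slight modification of the proof of Brualdi's Theorem~1.1.10, so your argument is a self-contained way of carrying out that adaptation for the $U_1PU_2$ convention. The whole invariance step fits in the single identity $M_{\ge j,\le i}=(U_1)_{\ge j,\ge j}\,P_{\ge j,\le i}\,(U_2)_{\le i,\le i}$, where both outer factors are invertible upper triangular blocks. This settles the point you flag as the ``main obstacle''.

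Compare this with an elimination-style argument, the usual way to prove existence of the decomposition. That approach would build the factorisation column by column and then have to identify each pivot row with the extremal $j$ in the formula. Your route instead takes an arbitrary Bruhat factorisation as input, with existence supplied by the preceding theorem. It avoids that bookkeeping and gives the uniqueness of $P$ as a by-product, over any field.

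One small point to make explicit: the Bruhat theorem as stated in the paper only says $U_1,U_2$ are upper triangular. Their invertibility, which you use for the lower-right block of $U_1$ and for the leading block of $U_2$, follows from $\det M=\det U_1\det P\det U_2\neq 0$.
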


\begin{example}
\label{ex::Nakayama}
Let $A=KQ/I$ be the linear Nakayama algebra with quiver $Q:$
\[\begin{tikzcd}
	1 & 2 & 3 & 4 & 5
	\arrow["\alpha", from=1-1, to=1-2]
	\arrow["\beta", from=1-2, to=1-3]
	\arrow["\gamma", from=1-3, to=1-4]
	\arrow["\delta", from=1-4, to=1-5]
\end{tikzcd}\]
and relations $I=\langle \alpha \beta \gamma \delta \rangle.$
The Cartan matrix $\omega_A$ of $A$ is given by 
\[ \left( \begin{array}{ccccc}
1 & 0 & 0 & 0 & 0 \\
1 & 1 & 0 & 0 & 0 \\
1 & 1 & 1 & 0 & 0 \\
1 & 1 & 1 & 1 & 0 \\
0 & 1 & 1 & 1 & 1 \end{array} \right) \]
and the Coxeter matrix $C_A$ is given by
\[ \left( \begin{array}{ccccc}
0 & 0 & 0 & -1 & 0 \\
0 & 0 & 0 & 0 & -1 \\
-1 & 1 & 0 & 0 & -1 \\
-1 & 0 & 1 & 0 & -1 \\
-1 & 0 & 0 & 1 & -1 \end{array} \right) \]

A Bruhat factorisation of $C_A$ is given by 
\[ C_A=
\left( \begin{array}{ccccc}
-1 & 0 & 0 & 0 & 0 \\
0 & -1 & 0 & 0 & 0 \\
0 & 0 & 1 & 0 & -1 \\
0 & 0 & 0 & 1 & -1 \\
0 & 0 & 0 & 0 & -1
\end{array} \right)
\left( \begin{array}{ccccc}
0 & 0 & 0 & 1 & 0 \\
0 & 0 & 0 & 0 & 1 \\
0 & 1 & 0 & 0 & 0 \\
0 & 0 & 1 & 0 & 0 \\
1 & 0 & 0 & 0 & 0
\end{array} \right)
\left( \begin{array}{ccccc}
1 & 0 & 0 & -1 & 1 \\
0 & 1 & 0 & -1 & 0 \\
0 & 0 & 1 & -1 & 0 \\
0 & 0 & 0 & 1 & 0 \\
0 & 0 & 0 & 0 & 1
\end{array} \right)
\] 
We see now that the Coxeter permutation of $A$ is given by $p_A(1)=5,\  p_A(2)=3,\  p_A(3)=4,\  p_A(4)=1$ and $p_A(5)=2.$

\end{example}

Let $A$ be a Nakayama algebra. In \cite{R}, Ringel defined the following bijection, called \textcolor{blue}{\textit{the homological bijection}}:
$$\textcolor{blue}{h}: \{\text{ simple A-modules }\}/_{\cong} \rightarrow \{ \text{simple A-modules} \}/_{\cong}$$ $$S\mapsto h(S)=\top \Omega^{e(S)}N(S),$$
where $\textcolor{blue}{e(S)}=\min\{\pdim S,\pdim I(S)\},$ when $I(S)$ denotes the injective envelope of $S$ and $N(S)=I(S)$ if $e(S)$ is even and $N(S)=S$ if $e(S)$ is odd. Moreover as was shown by Ringel, we have that $e(S)=e^*(h(S))$, where $e^*$ is defined as $e^*(T)=\min\{\idim T,\idim P(T)\}$ for a simple $A$-module $T.$ 
Here $P(T)$ is the projective cover of $T.$
We define \textcolor{blue}{\emph{Ringel's homological permutation}} $\textcolor{blue}{\hat{h}}: \{1,...,n\} \rightarrow \{1,...,n\}$ as $\hat{h}(i)=j$ if $h(S_i)=S_j$.

\begin{lemma}\label{lem::e(S)odd_or_even}
    Let $A$ be a Nakayama algebra and $S$ a simple module. Then the following hold:
    \begin{enumerate}[\rm (a)]
        \item If $e(S)$ is odd, $e(S)=\pdim S\le \pdim I(S).$
        \item If $e(S)$ is even, $e(S)=\pdim I(S)\le \pdim S.$
    \end{enumerate}
\end{lemma}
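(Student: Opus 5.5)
The plan is to compare the syzygies of $S$ and of $I(S)$ step by step, and to show that at every step one of them is "smaller" than the other in a sense that forces it to vanish once the other does. Recall that over a Nakayama algebra every indecomposable module is uniserial, every syzygy of a uniserial module is again uniserial (or zero), and $\Omega M=\mathrm{rad}^{\ell(M)}P(\top M)$ for uniserial $M$, where $\ell$ denotes length. Note that it suffices to show the following two implications. If $d=\pdim I(S)<\pdim S$, then $d$ is even. If $d=\pdim S<\pdim I(S)$, then $d$ is odd. In the case $\pdim S=\pdim I(S)$ both (a) and (b) hold trivially.

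\textbf{Two transfer rules.} Let $M,N$ be uniserial modules.

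(R1) If $M\subseteq N$, then $\Omega N$ is a quotient of $\Omega M$. In particular they have the same top. To see this, put $k=\ell(N)-\ell(M)$. The image of $P(\top M)\to P(\top N)$, obtained by lifting $M\hookrightarrow N$ through the projective cover of $N$, is $\mathrm{rad}^kP(\top N)$, and it is a quotient of $P(\top M)$. Taking $\mathrm{rad}^{\ell(M)}$ of this surjection gives a surjection $\Omega M=\mathrm{rad}^{\ell(M)}P(\top M)\twoheadrightarrow \mathrm{rad}^{\ell(N)}P(\top N)=\Omega N$.

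(R2) If $N\twoheadrightarrow M$ with the same top $T$, then $\Omega N\subseteq \Omega M$, both viewed as submodules of $P(T)$. This holds because $\Omega N=\mathrm{rad}^{\ell(N)}P(T)\subseteq \mathrm{rad}^{\ell(M)}P(T)=\Omega M$.

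In both rules zero modules are allowed; for instance, if $M=0$ in (R2) then $\Omega M=P(T)$, and everything stays consistent.

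\textbf{Alternating pattern.} Start from $S\subseteq I(S)$. By (R1), $\Omega I(S)$ is a quotient of $\Omega S$. Now apply (R2) with $N=\Omega S$ and $M=\Omega I(S)$ to get $\Omega^2S\subseteq\Omega^2 I(S)$. By induction on $k$ we then obtain:
\begin{itemize}
\item for $k$ even, $\Omega^kS\subseteq \Omega^kI(S)$;
\item for $k$ odd, $\Omega^kI(S)$ is a quotient of $\Omega^kS$.
\end{itemize}
Since a submodule or quotient of $0$ is $0$, this yields two vanishing statements. For even $k$, $\Omega^kI(S)=0$ implies $\Omega^kS=0$. For odd $k$, $\Omega^kS=0$ implies $\Omega^kI(S)=0$.

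\textbf{Conclusion.} Suppose $d=\pdim I(S)<\pdim S$. Then $\Omega^{d+1}I(S)=0$ but $\Omega^{d+1}S\neq0$. By the vanishing statement for even indices, $d+1$ cannot be even, so $d$ is even. Suppose instead $d=\pdim S<\pdim I(S)$. Then $\Omega^{d+1}S=0$ but $\Omega^{d+1}I(S)\neq0$. By the vanishing statement for odd indices, $d+1$ cannot be odd, so $d$ is odd.

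These two implications show that the strict minimum of $\{\pdim S,\pdim I(S)\}$ has the parity claimed in (a) and (b). The equal case was handled above. Finiteness of $e(S)$, which is known from Ringel's work, is not needed for this parity argument.

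The main point needing care is (R1): one must check that the lifted map really has image $\mathrm{rad}^kP(\top N)$, which uses uniseriality of $P(\top N)$. One must also check the degenerate cases where some syzygy is already zero or projective, so that the induction does not break.
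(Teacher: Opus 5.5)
Your argument is correct, but it is not the paper's route. The paper gives no argument of its own here. It refers to a lemma of Madsen (see \cite[Lemma 3.2]{R}), which controls projective dimensions and their parities along inclusions of indecomposable modules over a Nakayama algebra (compare Proposition \ref{Madsenlemma}), and applies it to $S\subseteq I(S)$. You instead prove the needed comparison from scratch by interleaving the syzygies of $S$ and $I(S)$. In the interval notation of Section \ref{subsec::Nakayama_notation}, where $\Omega[c,d-1]=[d,f(c)-1]$, your rule (R1) is just the monotonicity of $f$, and (R2) is immediate. So your proof is elementary and self-contained. If you run it with an arbitrary inclusion $M\subseteq N$ of nonzero indecomposables instead of $S\subseteq I(S)$, it even reproves Madsen-type statements such as Proposition \ref{Madsenlemma}. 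The citation is shorter but outsources exactly this step.

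One claim in your write-up should be corrected: zero modules are not harmless in (R1) and (R2), and the alternating pattern is false for large $k$. The syzygy of $0$ is $0$, not $P(T)$, and (R1) fails for $M=0\neq N$ with $N$ non-projective. For instance, take the linear Nakayama algebra with Kupisch series $(2,2,2,1)$ and $S=S_2$. Then $I(S_2)\cong P(1)$ is projective but $\Omega^2S_2\cong S_4\neq 0$, so $\Omega^2S\subseteq\Omega^2I(S)$ fails.

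The repair is free: establish the pattern only for $0\le k\le \min\{\pdim S,\pdim I(S)\}+1$. For $k\le\min\{\pdim S,\pdim I(S)\}$ both syzygies are nonzero, so each application of (R1) and (R2) has nonzero input. Moreover, $k=\min\{\pdim S,\pdim I(S)\}+1$ is the only index your conclusion uses.
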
 
\begin{proof}
    This follows from a lemma by Madsen, see e.g. \cite[Lemma 3.2]{R}. 
\end{proof}

Another relevant bijection is due to Auslander and Reiten \cite[Proposition 5.4]{AR}.
Let $A$ be an Auslander-Gorenstein algebra.
Then there is a bijection, called the \textcolor{blue}{Auslander-Reiten bijection}
$$\textcolor{blue}{\psi}: \ \{\text{indecomposable injective A-modules }\}/_{\cong} \rightarrow \{\text{ indecomposable projective A-modules }\}/_{\cong}$$ $$I\mapsto P=\Omega^{d}(I),$$ where $d$ is the projective dimension of $I$.
Moreover, $P=\Omega^{d}(I)$ has injective dimension equal to $d$. 
The inverse of this bijection, $\psi^{-1}$ maps an indecomposable projective $A$-module $P$ to $I=\Omega^{-d}(P)$ where $d=\idim P.$ 
Since indecomposable injectives and indecomposable projectives are both in bijection with simple modules, this naturally gives rise to a permutation on the simple $A$-modules as well.
If $A$ has $n$ simple modules, we define the \textcolor{blue}{\emph{Auslander-Reiten permutation}} $\textcolor{blue}{\hat{\psi}}: \{1,\ldots,n\} \rightarrow \{1,\ldots,n\}$ as $\hat{\psi}(i)=j$ when $\psi(I(i))=P(j).$ 
We sometimes use the notation $\textcolor{blue}{\psi_A}$ and $\textcolor{blue}{\hat{\psi}_A}$ if we want to emphasise the algebra $A$ these maps correspond to.

It was shown in \cite{Kla} that one can characterise the Auslander-Gorenstein property of monomial algebras using the Auslander-Reiten bijection. This holds, in particular, for Nakayama algebras.

\begin{theorem}\cite[Theorem 7.9]{Kla}
\label{thm::AGcharacterisationviaARbij_monomial}
Let $A$ be a monomial algebra. Then $A$ is Auslander-Gorenstein if and only if the map $${\psi}: \ \{\text{indecomposable injective A-modules }\}/_{\cong} \rightarrow \{\text{ indecomposable projective A-modules }\}/_{\cong}$$ $$I\mapsto \Omega^{\pdim I}(I)$$ is well-defined and bijective.
Here, well-definedness means that $\pdim I<\infty$ and $\Omega^{\pdim I}(I)$ is indecomposable for every $I.$

\end{theorem}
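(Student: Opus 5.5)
The forward direction is the classical Auslander--Reiten statement. If $A$ is Auslander--Gorenstein, then \cite[Proposition 5.4]{AR} gives three facts. First, $\pdim I<\infty$ for every indecomposable injective $I$. Second, $\Omega^{\pdim I}(I)$ is an indecomposable projective module, namely the last term of the minimal projective resolution. Third, $I\mapsto \Omega^{\pdim I}(I)$ is a bijection onto the indecomposable projectives. So $\psi$ is well-defined and bijective, and monomiality is not needed here.

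For the converse I would verify criterion (ii) of \Cref{AGcondition}, namely $\grade(S_i)=\pdim I(i)<\infty$ for all $i$. Put $d_i=\pdim I(i)$ and let $\psi(I(i))=P(j)$. The minimal projective resolution of $I(i)$ is
\[
0\to P(j)\to P_{d_i-1}\to\cdots\to P_0\to I(i)\to 0 ,
\]
and it does not split at the left end by minimality. Hence $\Ext^{d_i}_A(I(i),P(j))\neq 0$. The key intermediate claim is
\[
\Ext^k_A(I(i),A)=0 \quad\text{for } 0\le k<d_i .
\]
From this claim I would deduce two things. Since every indecomposable projective occurs as a left end of such a sequence, each $P(j)$ has finite injective dimension, so $\idim A_A<\infty$. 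Moreover, the dualised complexes $\Hom_A(P_\bullet,A)$ are exact in degrees below $d_i$. Reading these off with \Cref{Bensonlemma} would show that $S_i$ first appears in the socle of the minimal injective coresolution of $A$ in degree $d_i$, which gives $\grade S_i = d_i$.

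To prove the claim I would use the monomial structure. By Zimmermann-Huisgen's description, every syzygy over a monomial algebra is a direct sum of right ideals $pA$ generated by paths $p$. Hence each $\Omega^k I(i)$ decomposes into cyclic path modules. Indecomposability of the final syzygy, combined with bijectivity of $\psi$, should rule out any nonzero $\Ext^k(I(i),P(l))$ for $k<d_i$. The mechanism I have in mind is a counting argument: a nonzero such Ext would force some projective $P(l)$ to receive ``too many'' preimages under the correspondence $I\mapsto \Omega^{\pdim I}I$. To set this up I would induct on $k$, showing at each stage that $A$ is $(k+1)$-Gorenstein.

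I expect this inductive step, which converts bijectivity into the vanishing of low Ext groups, to be the main obstacle. For it I would first try an explicit analysis of how the path modules $pA$ propagate through the minimal projective resolution of $I(i)$. The aim is to show that any early nonvanishing produces a second injective whose last syzygy is the same projective, which would contradict injectivity of $\psi$.
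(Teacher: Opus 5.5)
Your forward direction is correct, and it is exactly how the paper uses \cite[Proposition 5.4]{AR}. The paper does not prove this statement itself; it quotes it from \cite[Theorem 7.9]{Kla}.

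The converse, however, rests on an intermediate claim that is false. The vanishing $\Ext^k_A(I(i),A)=0$ for $0\le k<d_i$ already fails for $k=0$ in an Auslander regular linear Nakayama algebra. Take $A=K(1\xrightarrow{\alpha}2\xrightarrow{\beta}3\xrightarrow{\gamma}4)/\langle\alpha\beta\rangle$. In interval notation, $P(1)=I(2)=[1,2]$, $P(2)=I(4)=[2,4]$, $P(3)=[3,4]$, $P(4)=[4]$, $I(1)=[1]$ and $I(3)=[2,3]$. One computes $\Omega I(3)=[4]=P(4)$ and $\Omega^2 I(1)=[3,4]=P(3)$. So $\psi$ is well-defined and bijective, $A$ is Auslander regular, and $d_3=\pdim I(3)=1$. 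Yet the composite $I(3)\twoheadrightarrow S_2=\soc P(1)\hookrightarrow P(1)$ is nonzero, so $\Hom_A(I(3),A)\neq 0$. The grade of $I(i)$ sees every composition factor of $I(i)$, not only its socle $S_i$; here $\grade S_2=0$ while $\grade S_3=1$. So in general $\grade I(i)$ is strictly smaller than $\pdim I(i)$. Since this algebra satisfies your hypothesis and violates your claim, no counting argument with path modules $pA$ can derive the claim from bijectivity of $\psi$.

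Independently of this, your remaining steps do not connect to criterion (ii) of \Cref{AGcondition}. $S_i$ is a submodule of $I(i)$, not a quotient. So $0\to S_i\to I(i)\to I(i)/S_i\to 0$ only yields $\Ext^k_A(I(i),A)\to\Ext^k_A(S_i,A)\to\Ext^{k+1}_A(I(i)/S_i,A)$. Exactness of $\Hom_A(P_\bullet,A)$ therefore says nothing, via \Cref{Bensonlemma}, about which $I(i)$ occur in the minimal injective coresolution of $A_A$. Likewise, $\Ext^{d_i}_A(I(i),P(j))\neq 0$ does not give $\Ext^{d_i}_A(S_i,A)\neq 0$. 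That would need $I(i)$ to be the last term of the injective coresolution of some projective, i.e. the inverse map of Remark \ref{rmk::inverse_ARbij}, which is not part of the hypothesis. Also, the sequence $0\to P(j)\to P_{d_i-1}\to\cdots$ has projective middle terms, so it gives no bound on $\idim P(j)$.

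What you actually must show is $\Ext^k_A(S_i,A)=0$ for $k<d_i$ and $\Ext^{d_i}_A(S_i,A)\neq 0$. By \Cref{Bensonlemma}, the vanishing part for all $i$ simultaneously is equivalent to $\pdim I^k\le k$ for every term $I^k$ of the minimal injective coresolution of $A_A$. That is the whole content of the converse, so your reduction is circular. Moreover, for arbitrary algebras this converse is the open conjecture \cite[Conjecture 1.2.4]{KM}. The monomial structure therefore has to enter through a concrete mechanism, which your sketch does not yet supply.
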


It is an open question, due to Marczinzik \cite[Conjecture 1.2.4]{KM}, whether one can characterise the Auslander-Gorenstein property this way for every quiver algebra.

\begin{remark}
\label{rmk::inverse_ARbij}
\Cref{thm::AGcharacterisationviaARbij_monomial} also implies that a monomial algebra $A$ is Auslander-Gorenstein if and only if the map $\psi^{-1}: \{\text{indec. proj. $A$-modules}\}/_{\cong}\to \{\text{indec. inj. $A$-modules}\}/_{\cong}$ $P\mapsto \Omega^{-\idim P}(P)$ is well-defined and bijective. 
This holds since $A$ is Auslander-Gorenstein (resp. monomial) if and only if $A^{op}$ is, and $\psi_A^{-1}$ is well-defined and bijective if and only if the same holds for $\psi_{A^{op}}.$ 
\end{remark}

Iyama discovered a further permutation on the simple modules of an Auslander-Gorenstein algebra.
Namely, if $A$ is an Auslander-Gorenstein algebra, the \textcolor{blue}{\emph{grade bijection}} is defined as $$\textcolor{blue}{\phi}: \{\text{ simple A-modules }\}/_{\cong} \rightarrow \{ \text{ simple A-modules } \}/_{\cong}$$  $$S\mapsto \top(D\Ext_A^{g_S}(S,A)),$$ where $g_S:=\inf \{i \geq 0 \mid \Ext_A^i(S,A) \neq 0 \}$ is the grade of $S$. 
The \textcolor{blue}{\textit{cograde}} of the simple module $S$ is defined dually, i.e. as $\inf \{i \geq 0 \mid \Ext_A^i(D(A),S) \neq 0 \}.$ 
An important property of $\phi$ is that the cograde of $\phi(S)$ equals the grade of $S$, see  \cite[Theorem 2.10]{I}.
We define the \textcolor{blue}{\emph{grade permutation}} $\textcolor{blue}{\hat{\phi}}: \{1,\ldots,n\} \rightarrow \{1,\ldots,n\}$ as $\hat{\phi}(i)=j$ when $\phi(S_i)=S_j.$

An important theorem for Auslander-Gorenstein algebras is that Iyama's grade permutation and the Auslander-Reiten permutation coincide:

\begin{theorem}\cite[Theorem 3.12]{KMT}
\label{thm::gradebij=ARbij}
Let $A$ be an Auslander-Gorenstein algebra.
Then Iyama's grade permutation coincides with the Auslander-Reiten permutation.
\end{theorem}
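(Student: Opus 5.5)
The plan is to compare both permutations through the minimal projective resolution of an indecomposable injective module, and to use \Cref{AGcondition} to match the relevant homological degrees. Fix a vertex $i$ and set $d:=\pdim I(i)$. By \Cref{AGcondition} we have $d=\grade(S_i)<\infty$. Take a minimal projective resolution
$$0\to P_d\to P_{d-1}\to\cdots\to P_0\to I(i)\to 0 .$$
By the Auslander--Reiten bijection, $P_d=\Omega^d(I(i))\cong P(j)$ is indecomposable with $\idim P(j)=d$, where $j=\hat\psi(i)$. We must show that $\top\bigl(D\Ext^d_A(S_i,A)\bigr)\cong S_j$, i.e.\ that $\hat\phi(i)=j$.

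\textbf{Step 1: compute $\Ext^d(I(i),A)$.} Apply $\Hom_A(-,A)$ to the resolution. This gives a presentation
$$\Hom_A(P_{d-1},A)\to\Hom_A(P_d,A)\to\Ext^d_A(I(i),A)\to 0$$
of left modules. Minimality makes the first map radical, so $\Ext^d(I(i),A)$ is a quotient of the indecomposable projective left module $\Hom_A(P(j),A)\cong Ae_j$, with simple top indexed by $j$. Dually, $D\Ext^d(I(i),A)$ has simple socle indexed by $j$.

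\textbf{Step 2: pass from $I(i)$ to $S_i$.} Use the short exact sequence $0\to S_i\to I(i)\to I(i)/S_i\to 0$ and its long exact $\Ext(-,A)$-sequence. Since $\grade(S_i)=d$ and $\Ext^{d+1}(I(i),A)=0$, we get the segment
$$\Ext^d(I(i)/S_i,A)\to\Ext^d(I(i),A)\to\Ext^d(S_i,A)\to\Ext^{d+1}(I(i)/S_i,A)\to 0 .$$
I would use the Auslander condition on $A$ (every submodule of $\Ext^k(M,A)$ has grade $\ge k$) together with Iyama's theory of the grade bijection \cite{I}. The goal is to show that $\Ext^d(S_i,A)$ is, up to the term with larger grade, governed by the image of $\Ext^d(I(i),A)$, and that the map $\Ext^d(I(i),A)\to\Ext^d(S_i,A)$ is nonzero on the top $Ae_j/\mathrm{rad}$.

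A cleaner check of nonvanishing goes as follows. The minimal injective coresolution of $P(j)$ has length $d$ with last term $\Omega^{-d}(P(j))=I(i)$ by the description of $\psi^{-1}$. By \Cref{Bensonlemma}(2) this gives $\Ext^d(S_i,P(j))\neq 0$, so $e_j$ does appear in the relevant top. I would then identify the top of $D\Ext^d(S_i,A)$ with this contribution, using that it is simple, which Iyama proves for Auslander--Gorenstein algebras.

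\textbf{Main obstacle.} The hard step is excluding contributions from other $P(k)$, i.e.\ showing that $\top D\Ext^d(S_i,A)$ is exactly $S_j$ and not another simple. I would first try to use the cograde identity $\cograde(\phi(S_i))=\grade(S_i)=d$ \cite[Theorem 2.10]{I}. Combined with \Cref{Bensonlemma}, this should force $\phi(S_i)=S_k$ to satisfy $\idim P(k)=d$ with $I(i)$ as the last term of the coresolution of $P(k)$. Injectivity of $\psi$ then yields $k=j$.
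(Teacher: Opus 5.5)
The paper does not prove this statement; it quotes it from \cite[Theorem 3.12]{KMT}. I therefore judge your argument on its own terms. As written, Steps 1 and 2 and the ``cleaner check'' establish nothing about $\hat\phi(i)$, because they look at the wrong end of the module.

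Since $\top(DX)\cong D(\soc X)$, the value $\hat\phi(i)$ is determined by the \emph{socle} of the left module $\Ext^d_A(S_i,A)$. Step 1, by contrast, determines the \emph{top} of $\Ext^d_A(I(i),A)$. The image of $\Ext^d_A(I(i),A)\to\Ext^d_A(S_i,A)$ is a quotient of $Ae_j$, so its top lies at $j$. Its socle, however, is not controlled unless you show the image is simple, and nothing in the proposal does that.

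Likewise, ``$\Ext^d_A(S_i,P(j))\neq 0$, so $e_j$ appears in the relevant top'' is a non sequitur. Since $\Ext^d_A(S_i,P(j))=e_j\Ext^d_A(S_i,A)$, this only says that $j$ lies in the support. Already for the path algebra $A$ of $1\to 2$ and the grade-$0$ simple $S_2=P(2)$, we have $\Hom_A(S_2,A)\cong Ae_2$ with $\Hom_A(S_2,P(1))\neq 0\neq\Hom_A(S_2,P(2))$. Yet its socle, as a left module, is the simple at vertex $1$ only. So support information together with ``the socle is simple'' cannot single out $\phi(S_i)$.

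Your final paragraph contains the argument that actually works, but it credits the key step to the wrong tool and omits the link that makes \Cref{Bensonlemma} applicable. Write $\phi(S_i)=S_k$.

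\textbf{(a)} \Cref{Bensonlemma} does not turn $\cograde(S_k)=d$ into $\idim P(k)=d$. What you need is the left--right dual of \Cref{AGcondition}. We have $\Ext^t_A(DA,S_k)\cong\Ext^t_{A^{op}}(DS_k,A)$, $A^{op}$ is Auslander--Gorenstein, and the injective envelope of $DS_k$ is $D(e_kA)$. Hence $\cograde(S_k)=\idim P(k)$, and together with \cite[Theorem 2.10]{I} this gives $\idim P(k)=d$.

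\textbf{(b)} Because $S_k$ is a composition factor of $D\Ext^d_A(S_i,A)$, we get $0\neq e_k\Ext^d_A(S_i,A)=\Ext^d_A(S_i,P(k))$. Now \Cref{Bensonlemma}(2) puts $S_i$ into the socle of the $d$-th term of the minimal injective coresolution of $P(k)$. By (a), that term is the last one, $\Omega^{-d}P(k)$, which is indecomposable by \cite[Proposition 5.4]{AR}. Hence $\Omega^{-d}P(k)\cong I(i)$, so $\psi(I(i))=P(k)$ and $k=\hat\psi(i)=j$.

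With (a) and (b) made explicit, this is a complete short proof, and Steps 1--2 can be discarded. It is Iyama's cograde identity, not any computation of $\Ext^d_A(I(i),A)$, that pins down the socle of $\Ext^d_A(S_i,A)$.
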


\subsection{2-Gorenstein monomial algebras}
\label{subsec::monomial_results}

The paper \cite{Kla} investigates the Auslander-Gorenstein property of monomial algebras.
Here we recall the explicit characterisation of 2-Gorenstein monomial algebras in terms of quivers and relations and discuss some of their properties.
We use the following terminology: In a quiver algebra $KQ/I$ a path $p$ is a \textcolor{blue}{\textit{minimal relation}} if $p\in I$ and no proper subpath of $p$ is contained in $I.$

\begin{theorem}\cite[Theorem 4.1]{Kla}
\label{thm::2Gor_monomial} 
Let $A=KQ/I$ be a monomial algebra. Then $A$ is $2$-Gorenstein if and only if it satisfies the following conditions:
\begin{enumerate}[\rm (1)]
    \item $Q$ is biserial, i.e. for every vertex $x\in Q_0$ we have $\deg^{\text{out}}(x)\le 2$ and $\deg^{\text{in}}(x)\le 2.$
    \item For every vertex $x\in Q_0$ we have $\deg^{\text{out}}(x)=2$ if and only if $\deg^{\text{in}}(x)=2.$

    \item If $\deg(x)=4$ then we can label the incoming arrows at $x$ by $a_1$ and $a_2$ and the outgoing arrows from $x$ by $b_1$ and $b_2$, such that $a_1b_2,a_2b_1\in I,$ and $a_ib_i$ are not contained in any minimal relations for $i=1,2$.
{

\centering
\resizebox{0.15\linewidth}{!}{
\begin{tikzpicture}
    \node (A) at (0, 0) {\large $x$};
    \node (B) at (1, 1) {};
    \node (C) at (-1, 1) {};
    \node (D) at (1, -1) {};
    \node (E) at (-1, -1) {};

     \node (v) at (0.5,0.5) {};
    \node (w) at (0.5,-0.5) {};
    \node (x) at (-0.5,0.5) {};
    \node (y) at (-0.5,-0.5) {};


    \draw[red, bend left, shorten <=1pt, shorten >=1pt, line width=1pt] (v) to (w);
    \draw[red, bend right, shorten <=1pt, shorten >=1pt, line width=1pt] (x) to (y);

    \draw[->] (B) to (A) ;
    \draw[->] (C) to (A) node[midway,xshift=-0.8cm,yshift=0.4cm ] {$a_1$};
    \draw[->] (A) to (D) node[midway,xshift=-0.8cm,yshift=-0.4cm ] {$b_2$};
    \draw[->] (A) to (E) node[midway,xshift=0.8cm,yshift=-0.4cm ] {$b_1$};
    \draw[->] (A) to (E) node[midway,xshift=0.8cm,yshift=0.4cm ] {$a_2$}; 
\end{tikzpicture}
}

}

\item If an arrow is contained in a minimal relation, then this arrow must be the start or the end of some minimal relation. 

\end{enumerate}

In particular, every $2$-Gorenstein monomial algebra is a string algebra.
\end{theorem}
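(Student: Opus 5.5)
The plan is to unpack the $2$-Gorenstein condition into two module-theoretic requirements and translate each into the combinatorics of paths. By \Cref{def::AG}, $A$ is $2$-Gorenstein exactly when
\begin{enumerate}[\rm (a)]
\item $I^0$ is projective, and
\item $\pdim I^1\le 1$.
\end{enumerate}
By \Cref{Bensonlemma}(2), the indecomposable summands of $I^0$ (resp.\ $I^1$) are the $I(S)$ with $\Ext^0_A(S,A)\neq 0$ (resp.\ $\Ext^1_A(S,A)\neq 0$). So (a) says $I(S)$ is projective for every simple $S$ in $\soc A_A$. Condition (b) says $\pdim I(S)\le 1$ for every $S$ occurring in the socle of the first cosyzygy $\Omega^{-1}(A)$.

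\textbf{Step 1: the basic tool.} For a monomial algebra, $P(x)=e_xA$ has as basis the nonzero paths starting at $x$. Its socle is spanned by the maximal nonzero paths. Hence $P(x)$ has simple socle iff there is a unique maximal path from $x$, iff $P(x)$ is uniserial. Dually, $I(y)$ has basis the nonzero paths ending at $y$. So a projective-injective indecomposable module over a monomial algebra is uniserial: a single path $p\colon x\to y$ with $P(x)\cong I(y)$.

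\textbf{Step 2: translating (a).} Condition (a) says that for every maximal path $p$ ending at $y$, the module $I(y)$ is of this form. I will analyse a vertex $x$ locally:
\begin{enumerate}[\rm (i)]
\item Suppose $x$ has outgoing arrows $b_1,\dots,b_k$. Each $b_i$ extends to a maximal path, whose end vertex must have uniserial injective envelope. Comparing $P(x)$ with the uniserial injectives that receive it, together with the dual argument on $A^{op}$ (allowed since the conditions are self-dual), should give $\deg^{\mathrm{out}}\le 2$, $\deg^{\mathrm{in}}\le 2$ and condition (2).
\item More precisely, if $x$ has two outgoing arrows but only one incoming arrow $a$, then $P(\text{source of }a)$ contains two branches through $x$. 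Such a module cannot embed into a projective-injective uniserial module, contradicting (a).
\item The same kind of argument forces the relations $a_1b_2,\,a_2b_1\in I$ at a vertex of degree $4$. Otherwise some $P(z)$ would branch, and its injective envelope would not be projective. This gives the first half of (3).
\end{enumerate}

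\textbf{Step 3: translating (b), the main obstacle.} Here I compute $\Omega^{-1}(P(x))$ explicitly as a string-module quotient $I^0(P(x))/P(x)$, using that $I^0$ is a sum of uniserial projective-injectives $P(z)$ indexed by the maximal paths out of $x$. Its socle simples $S_w$ are determined by the minimal relations starting near $x$. For each such $w$, I will compute $\Omega(I(w))$ via the dual path description and require it to be projective.
\begin{enumerate}[\rm (i)]
\item The failure of $\Omega(I(w))$ to be projective should come exactly from an arrow that lies inside a minimal relation but is neither the start nor the end of any minimal relation. This yields (4).
\item It should also come from some $a_ib_i$ lying inside a minimal relation, which yields the second half of (3).
\end{enumerate}
This case analysis of overlapping minimal relations is where I expect the difficulty. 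I would organise it by the position of the relevant arrow inside the overlapping relations.

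\textbf{Step 4: the converse and the final claim.} For the converse, assuming (1)--(4), I will run the same computations forward. Conditions (1)--(3) make every projective embed into uniserial projective-injectives, giving (a). Condition (4) makes each relevant $\Omega(I(w))$ projective, giving (b). Finally, (1) together with the zero relations in (3) are precisely the defining conditions of a string algebra, which gives the last sentence of the theorem.
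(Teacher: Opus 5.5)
Your reduction to (a) $I^0$ projective and (b) $\pdim I^1\le 1$ is fine, and so is the path description of socles in Step 1. The gap is in Step 2: condition (a) alone, i.e.\ $1$-Gorenstein, implies neither (1), nor (2), nor the vanishing of $a_1b_2,a_2b_1$.

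\emph{Failure of (1).} Take arrows $a_k\colon z_k\to x$ and $b_k\colon x\to y_k$ for $k=1,2,3$, with relations $a_kb_l$ for $k\neq l$. The only simples in $\soc A_A$ are the $S_{y_k}$. Each $I(y_k)$ is the uniserial module of the path $a_kb_k$, so $I(y_k)\cong P(z_k)$. Hence $I^0$ is projective, although $x$ has in- and out-degree $3$.

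\emph{Failure of (2).} Take $c\colon z'\to z$, $c'\colon z''\to z$, $a\colon z\to x$ and $b_k\colon x\to y_k$ for $k=1,2$, with relations $cab_2$ and $c'ab_1$. Then $I(y_1)\cong P(z')$ and $I(y_2)\cong P(z'')$, so (a) holds, while $x$ has in-degree $1$ and out-degree $2$.

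\emph{Failure of the first half of (3).} Around a degree-$4$ vertex, add arrows $c_{ij}\colon w_{ij}\to z_i$ and $d_{ji}\colon y_j\to u_{ji}$, and kill $c_{ij}a_ib_{j'}$ and $a_{i'}b_jd_{ji}$ for $j'\neq j$, $i'\neq i$. Then (a) holds and all four $a_ib_j$ are nonzero.

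The faulty inference in Step 2(ii)/(iii) is that a branching projective cannot have a projective injective envelope. In fact the envelope of $P(z)$ has one uniserial summand per maximal path out of $z$. Condition (a) only asks each summand to be projective, and relations further out along the branches can arrange that. What is excluded is only that $P(z)$ sits inside a \emph{single} uniserial projective-injective.

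So (1), (2) and the first half of (3) must also come from (b). Your Step 3 only aims at (4) and the second half of (3), and is itself only sketched ("should come exactly from"), yet it now has to carry essentially the whole forward direction. The examples show what that analysis must capture.
\begin{itemize}
\item In the first example, $S_x^2\subseteq\soc\Omega^{-1}(P(x))$, so $I(x)$ is a summand of $I^1$. Moreover $\Omega(I(x))$ has top $S_x^2$ but dimension $5<8=\dim P(x)^2$, so $\pdim I(x)\ge 2$.
\item In the second example, the branch vertex $x$ is harmless: $\Omega(I(x))\cong P(z)$. The obstruction sits at $z$, where the left extensions $ca$ and $c'a$ of the two uniserial hulls diverge. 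Writing $p^*$ for dual basis vectors, the class of $((ab_1)^*,(ab_2)^*)\in I(y_1)\oplus I(y_2)$ lies in $\soc\Omega^{-1}(P(x))$, because its product with $a$ is the image of $e_x$. So $I(z)$ is a summand of $I^1$. Then $\Omega(I(z))$ has top $S_z\oplus S_x$ but dimension $5<7=\dim(P(z)\oplus P(x))$, so $\pdim I(z)\ge 2$.
\end{itemize}
Your case analysis therefore has to record socle classes of $\Omega^{-1}(P(x))$ coming from the left parts of the uniserial hulls, not only from relations starting at or after $x$.

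Two smaller points. First, the left--right symmetry you invoke is Auslander's theorem on $n$-Gorenstein algebras and should be cited, not taken as evident. Second, the string-algebra conclusion also needs (2): conditions (1) and (3) do not exclude a vertex of in-degree $1$ and out-degree $2$ with both compositions nonzero.
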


This result links $2$-Gorenstein monomial algebras to string algebras, which makes them much more accessible to work with.
String algebras are a well-researched class of algebras, we understand how their indecomposable modules look and how irreducible morphisms between such modules are described. 
This allows for explicit calculations of projective and injective resolutions, which we make use of in the following lemma. 
For a reference on string algebras, see \cite{BR} and \cite{CB}. 

For the proof of the next lemma, note that in a monomial algebra $A=KQ/I$ there exists a canonical uniserial $A$-module corresponding to any non-zero path $p\notin I.$ We denote this by $\textcolor{blue}{M(p)}$ in the following.

\begin{lemma}
\label{lem:InjResOfProj_2gorensteinmonomial}
    Let $A=KQ/I$ be a $2$-Gorenstein monomial algebra, $i\in Q_0$, and consider a minimal injective coresolution of the indecomposable projective module $P(i):$ 
    $$0\to P(i) \to I^0 \to I^1 \to I^2 \to \ldots. $$
     Then the following hold:
    \begin{enumerate}[(a)]
        \item If $P(i)$ is uniserial, then $I^d$ is indecomposable for every $\idim P(i)\ge d\ge 0.$
        \item If $P(i)$ is not uniserial, then a minimal injective coresolution is given by
        \begin{equation}
            \label{eq:nonuniserial_proj}
            0\to P(i) \to I(t_1)\oplus I(t_2) \to I(i) \to 0,
        \end{equation}
    where $t_1$ and $t_2$ stand for the end vertices of the two maximal non-zero paths in $A$ starting at $i.$
    \end{enumerate}

\end{lemma}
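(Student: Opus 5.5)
We rely on the structure of 2-Gorenstein monomial algebras from \Cref{thm::2Gor_monomial}: $A$ is a string algebra with biserial quiver, every vertex has in- and out-degree at most $2$, and at each vertex $x$ of degree $4$ the relations $a_1b_2, a_2b_1\in I$ hold. In a monomial algebra, $P(i)=e_iA$ has a basis of the nonzero paths starting at $i$. Since at most two arrows leave $i$, and by (3) each maximal nonzero path continues in a unique way through a degree-$4$ vertex, $P(i)$ is either uniserial or the union of two uniserial modules $M(p_1)$ and $M(p_2)$ sharing the top $S_i$. Here $p_1$ and $p_2$ are the two maximal nonzero paths starting at $i$, with end vertices $t_1$ and $t_2$. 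The same description holds dually for the injective modules $I(j)$, which are determined by the maximal nonzero paths ending at $j$.

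\textbf{Part (b).} Suppose $P(i)$ is not uniserial, so its socle is $S_{t_1}\oplus S_{t_2}$ and $I^0=I(t_1)\oplus I(t_2)$. Because $\deg^{\text{out}}(i)=2$, condition (2) gives $\deg^{\text{in}}(i)=2$, with incoming arrows $a_1,a_2$ and outgoing arrows $b_1,b_2$. We then use (3): $a_1b_1$ and $a_2b_2$ lie in no minimal relation.

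\emph{Claim.} We may label so that $p_k$ starts with $b_k$. Then $I(t_k)$ is uniserial, namely $I(t_k)=M(q_kp_k)$, where $q_k$ is the maximal path ending at $i$ through $a_k$ such that $q_kp_k$ is nonzero and maximal.

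To prove the claim we combine (3) with (4). Since $a_kb_k$ lies in no minimal relation, any relation killing a path $qp_k$ must involve arrows before $a_k$ together with arrows of $p_k$. Condition (4) constrains how such overlapping relations can occur, and this is the combinatorial point that has to be checked. The expected conclusion is that the nonzero paths ending at $t_k$ are exactly the subpaths of one path $q_kp_k$, which passes through $i$.

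Granting the claim, the cokernel of $P(i)\to I(t_1)\oplus I(t_2)$ has dimension $\ell(q_1)+\ell(q_2)+1$, since the two copies of $S_i$ are identified. It is the string module $M(q_1)\cup M(q_2)$ glued at the socle $S_i$. This is a submodule of $I(i)$ with simple socle $S_i$. We show it equals $I(i)$ by checking that $q_1$ and $q_2$ are the maximal nonzero paths ending at $i$. For this we use again that $a_kb_k$ is not in any minimal relation: a nonzero path $q$ through $a_k$ ending at $i$ extends by $p_k$ to a nonzero path, so $q\subseteq q_k$. Hence the cokernel is injective and the coresolution stops, which gives \eqref{eq:nonuniserial_proj}.

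\textbf{Part (a).} Suppose $P(i)=M(p)$ is uniserial, with $p$ ending at $t$. Then $I^0=I(t)$ is indecomposable. We argue inductively that each cosyzygy $\Omega^{-d}P(i)$ is either injective or a uniserial string module with simple socle, so that its injective envelope $I^{d+1}$ is indecomposable.

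The main obstacle is this inductive step. Two things must be shown:

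1. The cokernel of $M(p)\hookrightarrow I(t)$ is uniserial. If $I(t)$ is biserial, with two arms ending at $t$, the quotient by the uniserial submodule $M(p)$ lying along one arm could leave two pieces. We must show that the arm containing $p$ is exhausted by $p$ up to $t$, so that the quotient is just the other arm, which is uniserial.

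2. A uniserial module embedding into an injective module yields a uniserial cokernel.

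Both statements are string-combinatorial facts about the shapes forced by (3) and (4). We will verify them by the standard description of maps between string modules and injective envelopes, with the case distinction on whether the socle vertex has degree $4$. If $\Omega^{-d}$ is not injective, its socle is a single simple, so $I^{d+1}$ is indecomposable. The process stops at $d=\idim P(i)$.
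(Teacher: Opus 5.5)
There is a genuine gap. In both parts the decisive step is announced rather than proved (``this is the combinatorial point that has to be checked'', ``we will verify them''). You also point to condition (4), which plays no role here. What you are missing is a direct consequence of (3). Let $x$ have degree $4$, let $q$ end in $a_k$ and $p$ start with $b_k$ at $x$, with $q,p\notin I$. Then $qp\notin I$, because any minimal relation inside $qp$ that is not inside $q$ or $p$ would contain $a_kb_k$, which (3) forbids outright. Your sentence that such a relation ``must involve arrows before $a_k$ together with arrows of $p_k$'' misreads (3): no such relation exists at all. Put differently, the only minimal relations having a degree-$4$ vertex as an interior vertex are $a_1b_2$ and $a_2b_1$.

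This settles your Claim in (b). If $t_k$ had two incoming arrows, it would have degree $4$ by (2). Then $p_k$ would end in some incoming arrow there and extend nontrivially by the matching outgoing arrow, contradicting maximality. So $t_k$ has in-degree at most $1$, and $I(t_k)$ is uniserial because $A$ is a string algebra. Since $a_kp_k\notin I$, the maximal nonzero path ending at $t_k$ has the form $q_kp_k$ with $q_k$ ending in $a_k$. With this, your cokernel count in (b) goes through.

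Part (a) needs a further idea. Your point 2 is false as stated: at any degree-$4$ vertex $x$, the inclusion $S_x\hookrightarrow I(x)$ has a cokernel with one nonzero summand for each arm. Your point 1 is false for an arbitrary uniserial submodule of $I(t(q))$. What makes it true is that for $j\ge 1$ the cosyzygy $M(q)=\Omega^{-j}P(i)$ is a quotient of the indecomposable injective $I^{j-1}=I(y)$. Hence there is a path $r$ with $qr$ a maximal nonzero path ending at $y$, i.e.\ $aqr\in I$ for every arrow $a$.

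Now suppose $t(q)$ has degree $4$. Then $q$ is nontrivial: otherwise $qr=r$ starts at a degree-$4$ vertex, and the observation above extends it backwards, contradicting maximality. The minimal relation inside $aqr$ must start with $a$, since $qr\notin I$. It cannot run past $t(q)$, because it would then contain the pair formed by the last arrow of $q$ and the first arrow of $r$. That pair is a matching pair since $qr\notin I$, and (3) forbids it. So $aq\in I$ for all $a$, $q$ is a full arm of $I(t(q))$, and the cokernel is the other arm with its last arrow removed, which is uniserial.

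For $j=0$ this argument is unavailable, since $P(i)$ need not fill an arm. There you instead show, exactly as for $t_k$ in (b), that $t(p)$ has in-degree at most $1$ by maximality of $p$, so $I^0$ itself is uniserial.
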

\begin{proof}
Although these statements follow from the proofs of \cite[Lemma 4.13]{Kla} and \cite[Lemma 5.3]{Kla}, for the convenience of the reader, we include a direct proof here.
Assume first that $P(i)$ is uniserial. 
We will prove the following claim by induction on $j.$

\begin{claim*}
     $\Omega^{-j}(P(i))$ is uniserial for every $\idim P(i)\ge j\ge 0$.
\end{claim*}
This implies that every non-zero $I^j$ has a simple socle and is therefore indecomposable.

\begin{proof}[Proof of the Claim]
    Since $P(i)$ is uniserial, the statement holds for $j=0$, and there exists a unique maximal non-zero path $p$ starting at $i.$
    We have $I^0 \cong I(t(p))$ where $t(p)$ denotes the end vertex of $p.$
    By the maximality of $p$ and \Cref{thm::2Gor_monomial} (3), $t(p)$ has at most one incoming arrow. 
    Since $A$ is a string algebra by \Cref{thm::2Gor_monomial}, this ensures that $I^0$ is uniserial.
    Thus, all factor modules of $I^0$ are uniserial.
    This proves the Claim for $j=1.$

    Now assume that the Claim holds for a $1\le j < \idim P(i),$ and prove that this implies the statement for $j+1$ as well. 
    Let the uniserial $\Omega^{-j}(P(i))$ correspond to the non-zero path $q$ in $Q,$ i.e. $\Omega^{-j}(P(i))\cong M(q).$ 
    We have $I^j\cong I(t(q)).$
    Now if $t(q)$ has at most one incoming arrow, then since $A$ is a string algebra, both $I^j$ and its non-zero factor module $\Omega^{-(j+1)}(P(i))$ are uniserial.
    So it remains to consider the case when $t(q)$ has two incoming arrows. 
    By \Cref{thm::2Gor_monomial} (2), this means that $t(q)$ has degree $4.$ 
    As $A$ is a string algebra, this means that there are exactly two maximal non-zero paths ending at $t(q)$, $i_1$ and $i_2,$ and these end with different arrows. 
    One of them ends with $q,$ assume that this is $i_1.$
    (A priori, it is possible that $q=e_{t(q)}$, in which case both $i_1$ and $i_2$ and with $q.$)
    As a next step, we prove that $i_1=q.$ 
    The module $I^j\cong I(t(q))$ can be pictured as follows:

\begin{figure}[h]
    \resizebox{.35\linewidth}{!}{\input{inj.tex}}
\end{figure}

    Since $j\ge 1, $ $\Omega^{-j}(P(i))\cong M(q)$ is a factor module of an injective module. 
    Since it is uniserial by assumption, it is a factor module of an indecomposable injective $I(y).$
    Thus, there exists a path $r$ such that $qr$ is a maximal non-zero path ending at $y.$
    Thus, $aqr\in I$ for every $a\in Q_1.$
    First of all, this ensures that $q\neq e_{t(q)}.$
    Indeed, \Cref{thm::2Gor_monomial} (3) implies that if there is a non-zero path starting at a degree 4 vertex, in this case  $qr=e_{t(q)}r=r\notin I$, then there is always an arrow $a$ such that $aqr\notin I.$ This, however, contradicts the maximality of $qr,$ hence, $q$ contains at least one arrow. 
    By \Cref{thm::2Gor_monomial} (3), a minimal relation in $A$ which consists of at least three arrows can only contain a degree 4 vertex as one of its endpoints.
    So since $t(q)$ has degree 4,
    $aqr\in I$ implies that we must already have $aq\in I.$ This is true for all $a\in Q_1.$ So $q$ is a maximal path ending at $t(q),$ which forces $i_1=q.$

    Hence, $\Omega^{-(j+1)}(P(i))\cong \coker (\Omega^{-j}(P(i)) \hookrightarrow I^j)\cong I(t(q))/M(i_1)\cong  M(i_2^+)$, where $i_2^+$ is the path obtained from $i_2$ by omitting its last arrow. 
    This proves the Claim for $j+1$.    
\end{proof}

It remains to prove (b). 
For this, assume that $P(i)$ is not uniserial.
Since $A$ is a string algebra, there are exactly two non-zero paths starting at $i,$ $p_1$ and $p_2,$ ending with vertices $t_1$ and $t_2$, respectively.
We have $I^0\cong I(t_1)\oplus I(t_2).$
The same argument as in case (a) tells us that $I(t_1)$ and $I(t_2)$ are both uniserial, corresponding to the non-zero paths $i_1p_1$ and $i_2p_2,$ respectively.
In particular, $ai_jp_j\in I$ for all $a\in Q_1$ and $j=1,2.$
By \Cref{thm::2Gor_monomial}, $i$ is a degree $4$ vertex.
Part (3) of \Cref{thm::2Gor_monomial} tells us that if we have a non-zero path $q$ starting at a degree 4 vertex, then there is always an arrow $b$ such that $bq\notin I.$ 
Thus, both $i_1$ and $i_2$ consist of at least one arrow.
As mentioned before, a minimal relation in $A$ consisting of at least three arrows can only contain a degree 4 vertex as one of its endpoints.
Thus, $ai_jp_j\in I$ forces $ai_j\in I$ for all $a\in Q_1$ and $j=1,2.$ 
This implies that $i_1$ and $i_2$ are the two  maximal non-zero paths ending at $i.$
Finally, \Cref{thm::2Gor_monomial} (3) ensures that $i_1$ and $i_2$ end with different arrows, and $p_1$ and $p_2$ start with different arrows.
Then calculation gives that $\coker (P(i)\hookrightarrow I(t_1)\oplus I(t_2))\cong I(i).$
The resolution (\ref{eq:nonuniserial_proj}) can be pictured as:

\begin{figure}[h]
    \resizebox{.85\linewidth}{!}{\input{inj_resol.tex}}
\end{figure}
\end{proof}

\begin{corollary}
\label{cor:decreasing_labels_injective_resol}
    Let $A=KQ/I$ be a $2$-Gorenstein monomial algebra, and let $i,x,y \in Q_0.$ 
    If $$0\to P(i)\to I^0 \to I^1 \to \ldots $$ is a minimal injective coresolution of $P(i)$ and $I(x)$ is a direct summand of $I^j$ and $I(y)$ is a direct summand of $I^k$ for some $0\le j<k\le \idim P(i)$ then there exists a path $p$ starting at $y$ and ending at $x$ in $Q.$ (Here $p\in I$ is possible.)
\end{corollary}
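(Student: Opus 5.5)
We split according to the two cases of \Cref{lem:InjResOfProj_2gorensteinmonomial}, and in each case show that between consecutive terms of the coresolution there is a path from the socle vertex of the later term to the socle vertex of the earlier term. Composing these paths then handles arbitrary $j<k$.

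\emph{Case (b): $P(i)$ is not uniserial.} The coresolution is $0\to P(i)\to I(t_1)\oplus I(t_2)\to I(i)\to 0$. The only possibility is $j=0$, $k=1$, $y=i$ and $x\in\{t_1,t_2\}$. By definition, $t_1$ and $t_2$ are the end vertices of the maximal non-zero paths $p_1,p_2$ starting at $i$. So $p_1$ or $p_2$ is a path from $y$ to $x$, and it is even non-zero.

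\emph{Case (a): $P(i)$ is uniserial.} By part (a) of the lemma, each $I^m$ is indecomposable, say $I^m\cong I(s_m)$. Moreover, the Claim in its proof shows that each $\Omega^{-m}(P(i))$ is uniserial, isomorphic to some $M(q_m)$ with $t(q_m)=s_m$.

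\begin{itemize}
\item[] \emph{Key step.} For $0\le m<\idim P(i)$, we show there is a path from $s_{m+1}$ to $s_m$. The module $\Omega^{-(m+1)}(P(i))$ is the cokernel of $M(q_m)\hookrightarrow I(s_m)$, so it is a non-zero factor module of $I(s_m)$. Its socle is $S_{s_{m+1}}$, and it occurs as a composition factor of $I(s_m)$. Every composition factor $S_z$ of $I(s_m)=D(Ae_{s_m})$ satisfies $e_{s_m}Ae_z\ne 0$ (up to the paper's convention for right modules), which means there is a non-zero path between $z$ and $s_m$, running from $z$ to $s_m$. Applying this with $z=s_{m+1}$ gives the desired path.
\item[] \emph{Concrete check.} The proof of the Claim gives this explicitly. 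If $t(q_m)$ has at most one incoming arrow, the cokernel $I(s_m)/M(q_m)$ is the uniserial module of the maximal path ending at $s_m$ with $q_m$ removed from its end. Its socle vertex is the start of $q_m$, which has a path $q_m$ to $s_m$. In the degree-$4$ case the cokernel is $M(i_2^+)$, whose socle vertex is the start of $i_2^+$, and the path $i_2$ runs from there to $s_m$.
\end{itemize}

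To conclude, for $j<k$ we concatenate the paths $s_k\to s_{k-1}\to\cdots\to s_j$. This gives a path in $Q$ from $y=s_k$ to $x=s_j$. It may lie in $I$, which the statement allows.

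The main obstacle is getting the path direction right, that is, checking that the socle of the cokernel corresponds to a vertex from which a path leads to $s_m$ and not the other way round. I will settle this with the explicit description of $I(s_m)$ as the uniserial module of the maximal path ending at $s_m$, or, in the degree-$4$ case, as the union of the two maximal paths ending there. Every vertex on such a path has a subpath leading to $s_m$, and this fixes the direction.
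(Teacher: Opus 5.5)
Your proposal is correct and is essentially the paper's proof: case (b) is read off from the explicit coresolution in Lemma~\ref{lem:InjResOfProj_2gorensteinmonomial}. In case (a), the socle $S_{s_{m+1}}$ of $\Omega^{-(m+1)}(P(i))$ is a composition factor of its quotient-source $I(s_m)$, all of whose composition factors sit at vertices with a path to $s_m$, and concatenating these paths gives the path from $y$ to $x$.

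There are two slips, neither of which breaks the argument:
\begin{enumerate}
\item The multiplicity of $S_z$ in $I(s_m)$ is $\dim e_zAe_{s_m}$, not $\dim e_{s_m}Ae_z$; compare the paper's convention $\omega_{ij}=\dim e_jAe_i$. Your stated direction, from $z$ to $s_m$, is nevertheless the right one.
\item In your ``concrete check'' the socle vertices are misidentified. The socle of $M(i_2^+)$ is its end vertex $t(i_2^+)$, not its start. In the one-arrow case the socle is at the vertex just before the start of $q_m$, not at the start of $q_m$. Both of these vertices still have paths to $s_m$, and your final paragraph already gives the clean justification.
\end{enumerate}
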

\begin{proof}
    If $P(i)$ is non-uniserial, the statement follows immediately from Lemma \ref{lem:InjResOfProj_2gorensteinmonomial} (b). 
    Thus, assume that $P(i)$ is uniserial. 
    Then part (a) of the same lemma tells us that for every $0\le j\le \idim P(i)$ we have $I^j\cong I(x_j)\cong D(e_{x_j}A)$ for some vertex $x_j.$ 
    Thus, all composition factors of $I^j$ are isomorphic to simple modules $S_v$ where $v\in Q_0$ so that there is a (non-zero) path from $v$ to $x_j.$
    This is in particular true for the (indecomposable) socle of $I^{j+1}$, so there is always a path from $x_{j+1}$ to $x_{j}.$
    Concatenating these paths, we obtain a path $p$ from $x_k$ to $x_j$ for all $0\le j<k\le \idim P(i)$ (which may be contained in $I$ if $j+1<k.$)

\end{proof}

\subsection{Nakayama algebras}
\label{subsec::Nakayama_notation}
Finally, let us recall some preliminaries and notation on Nakayama algebras. 
Let $A=KQ/I$ be a connected Nakayama algebra with $n$ simple modules and 
underlying quiver $Q$ that is either a linearly oriented line as in (\ref{eq:linear_quiver}) or a linearly oriented cycle as in (\ref{eq:cyclic_quiver}). 
Then $A$ is uniquely determined by its \textcolor{blue}{\textit{Kupisch series}} $(c_1,\ldots, c_n)$ where $c_i=\dim P(i)=\dim e_iA\in \mathbb{Z}^+.$ 
Dually, $A$ is also uniquely determined by its \textcolor{blue}{\textit{co-Kupisch series}} $(d_1,\ldots, d_n)$ where $d_i=\dim I(i)=\dim D(Ae_i)\in \mathbb{Z}^+.$ 
To deal with the cyclic case, it will be convenient for us to regard the entire $\mathbb{Z}$ as the vertex set of
$Q$. 
For this, we set $e_i:=e_{\overline{i}}$, $c_i:=c_{\overline{i}}$,  $d_i:=d_{\overline{i}},$ $S_i := S_{\overline{i}}$, $I(i) := I(\overline{i})$ and $P(i) := P(\overline{i})$ for every $i\in \mathbb{Z}$  where $\overline{i}\in Q_0 = \{1,\ldots, n\}$
such that $\overline{i} \equiv i \ (\mod n )$. 
Every indecomposable $A$-module is uniserial and is of the form $e_iA/e_iJ^k$ for some $i\in \mathbb{Z}$ and $0\le k\le c_i.$ 
Here $J$ denotes the Jacobson radical of $A.$
The unique composition series of $e_iA/e_iJ^k$ is $0\subsetneq e_iJ^{k-1}/e_iJ^k \subsetneq e_iJ^{k-2}/e_iJ^k \subsetneq \ldots \subsetneq e_iA/e_iJ^k$, and the corresponding composition factors are $S_{i+k-1}, S_{i+k-2}, \ldots, S_{i+1},S_i. $
So we can naturally associate the indecomposable module $e_iA/e_iJ^k$ with the interval $[i,i+k-1]\subsetneq \mathbb{Z}.$
The intervals $[i,j]$ and $[i+cn,j+cn]$ denote the same modules for every $c\in \mathbb{Z}.$
The Loewy length of $e_iA/e_iJ^k$ is equal to the length of the interval $[i,i+k-1]$, that is $k.$
Note that $\top e_iA/e_iJ^k\cong S_i$ and $\soc e_iA/e_iJ^k\cong S_{i+k-1}.$ 
We will often abuse notation and associate a vertex $i$ with the corresponding simple $S_i.$ 
We write $S_i\le S_j$ to mean $i\le j$ in $\mathbb{Z}.$ For this, it is important to work with the specific choices of $i,j\in \mathbb{Z},$ not modulo $n.$ 

Let us define the following maps for $A$:
$$\textcolor{blue}{f}: \mathbb{Z}\rightarrow \mathbb{Z}, \ j \mapsto j+c_j$$ and 
$$\textcolor{blue}{g}: \mathbb{Z}\rightarrow \mathbb{Z}, \ j \mapsto j-d_j.$$
These maps $f$ and $g$ were already introduced in \cite{Gu}, and their properties were studied, e.g., in \cite{Sh} and \cite{R}.
It is important to note that $f$ and $g$ are both monotone increasing, and can be used to describe homological resolutions of indecomposable modules. 
By definition, $P(j)=[j,f(j)-1]$ and $I(j)=[g(j)+1,j].$  
Then a minimal injective coresolution of an indecomposable module $M$ is given by:
\begin{equation}
\label{eq::injResolUsingg}
    0\rightarrow M \rightarrow {I(a)} \rightarrow {I(b)} \rightarrow {I(g(a))} \rightarrow {I(g(b))} \rightarrow {I(g^2(a))} \rightarrow {I(g^2(b))}\rightarrow \ldots
\end{equation}
where $M=[b+1,a] \subsetneq \mathbb{Z}.$ Dually, a minimal projective resolution is given by: 
\begin{equation}
\label{eq::projResolUsingf}
    \ldots\rightarrow {P(f^2(d))} \rightarrow {P(f^2(c))} \rightarrow P(f(d)) \rightarrow {P(f(c))} \rightarrow {P(d)} \rightarrow {P(c)} \rightarrow  {M} \rightarrow 0
\end{equation}
where $M=[c,d-1] \subsetneq \mathbb{Z}.$
We have
\begin{equation}
    \idim M= \inf \ (  \{2i+1\in \mathbb{N}_0\ | \ g^{i+1}(a)=g^{i+1}(b)\} \cup \{2i\in \mathbb{N}_0\ | \ g^{i+1}(a)=g^{i}(b)\}  ) 
\end{equation}
and
\begin{equation}
   \pdim M= \inf \ ( \{2i+1\in \mathbb{N}_0\ | \ f^{i+1}(c)=f^{i+1}(d)\} \cup \{2i\in \mathbb{N}_0\ | \ f^{i+1}(c)=f^{i}(d)\}  ). 
\end{equation}

The following lemma plays an important role in the proofs of the next chapter. 
\begin{lemma}
    \label{lem::inequalitiesSocTop}
    Let $A$ be a Nakayama algebra, and let $M$, $I,$ and $P$ be indecomposable $A$-modules, such that $P$ is projective and $I$ is injective. Then the following hold:

    \begin{enumerate}[\rm (a)]
        \item If $\soc (P) <  \soc(M)$ then $\top (P) < \top(M)$.
        \item If $\soc (I) \le  \soc(M)$ then $\top (I) \le \top(M)$. 
        
    \end{enumerate}
    
\end{lemma}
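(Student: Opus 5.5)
We work in the interval model of \Cref{subsec::Nakayama_notation}: every indecomposable is an interval $[x,y]\subsetneq\mathbb{Z}$ with top $S_x$ and socle $S_y$, defined up to simultaneous translation by multiples of $n$. Write $P=P(j)=[j,f(j)-1]$ and $I=I(j)=[g(j)+1,j]$. The whole argument rests on one fact: $f$ and $g$ are monotone increasing. We will also use the uniserial structure of indecomposable projectives and injectives.

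For (a), fix representatives so that $\soc P = f(j)-1 < y = \soc M$ with $M=[x,y]$. Suppose for contradiction that $\top P \ge \top M$, i.e.\ $j \ge x$.

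Since $M=[x,y]$ is a factor of $P(x)=[x,f(x)-1]$, we get $y\le f(x)-1$. Monotonicity of $f$ applied to $x\le j$ gives $f(x)\le f(j)$. Hence
\[
y \le f(x)-1 \le f(j)-1 = \soc P < y,
\]
which is a contradiction. So $\top P<\top M$.

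Equivalently, without using $f$: if $x\le j\le y$, then the projective $P(x)$ has $[x,y]$ as a quotient, so it contains composition factors up to $y$. Its submodule starting at $j$ is a quotient of $P(j)$, so $P(j)$ must reach at least $y$, contradicting $f(j)-1<y$.

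For (b), let $I=[g(j)+1,j]$ with $j=\soc I\le \soc M = y$. We want $g(j)+1\le x$.

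Since $M=[x,y]$ is a submodule of $I(y)=[g(y)+1,y]$, we get $g(y)+1\le x$. Monotonicity of $g$ applied to $j\le y$ gives $g(j)\le g(y)$. Therefore
\[
\top I = g(j)+1 \le g(y)+1 \le x = \top M .
\]

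The only delicate point is the cyclic case, where comparisons are made in $\mathbb{Z}$ and the representatives must be chosen consistently. The statement should be read with the representatives fixed as the paper intends: the inequality between socles determines the lift, and the tops are then compared using those same lifts. The monotonicity of $f$ and $g$ on $\mathbb{Z}$ already encodes the periodicity $f(i+n)=f(i)+n$ and $g(i+n)=g(i)+n$, so the argument goes through verbatim for cyclic Nakayama algebras.

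If monotonicity of $f$ and $g$ is not taken as known, it follows directly. For $f$: the radical of $P(j)$ is a quotient of $P(j+1)$, so $c_{j+1}\ge c_j-1$, i.e.\ $f(j+1)\ge f(j)$. The dual argument gives the same for $g$.
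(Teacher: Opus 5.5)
Your proof is correct and complete. The paper gives no argument of its own for this lemma; it only cites \cite[Lemma 6.4]{Kla}. So the comparison is between that citation and your self-contained argument.

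Your key move is to compare $M=[x,y]$ not with $P$ or $I$ directly, but with its own projective cover $P(x)$ in (a) and its own injective envelope $I(y)$ in (b). This gives $y\le f(x)-1$, resp.\ $g(y)+1\le x$. Monotonicity of $f$ and $g$ then finishes each part in one line. The paper already records this monotonicity in Section~\ref{subsec::Nakayama_notation}, and you justify it via $c_{j+1}\ge c_j-1$ and its dual $d_{j+1}\le d_j+1$.

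What your route buys is transparency. The paper uses the lemma to deduce Corollary~\ref{cor::Nakayama_f_g_formulas}, which amounts to $f(\ell)\le j \Leftrightarrow \ell\le g(j)$. Your argument shows that the lemma, and hence that corollary, is just monotonicity of $f$ and $g$ plus the fact that $M$ is a quotient of $P(\top M)$ and a submodule of $I(\soc M)$.

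Two minor points:
\begin{itemize}
\item Your alternative argument ``without using $f$'' only treats $x\le j\le y$. The remaining case $j>y$ is immediate, since then $\soc P=f(j)-1\ge j>y=\soc M$ contradicts the hypothesis.
\item Monotonicity does not ``encode'' periodicity; periodicity comes from $c_{j+n}=c_j$. This is harmless, because your argument never uses periodicity. It works for arbitrary fixed integer intervals representing $P$, $I$ and $M$, which is exactly how the paper intends the comparisons to be read.
\end{itemize}
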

\begin{proof}
    The proof is spelled out in \cite[Lemma 6.4]{Kla}.
\end{proof}

\begin{corollary}
\label{cor::Nakayama_f_g_formulas}
Let $A$ be a Nakayama algebra. Let $j,\ell\in \mathbb{Z}.$ Then the following hold:
\begin{enumerate}[\rm (a)]
    \item If $f(\ell)-1<j$ then $\ell-1 < g(j).$ 
    \item If $j\le f(\ell)-1$ then $g(j) \le \ell-1.$
\end{enumerate}

\end{corollary}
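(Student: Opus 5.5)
The plan is to deduce both parts from \Cref{lem::inequalitiesSocTop}, applied to an indecomposable projective and an indecomposable injective that share an endpoint of the relevant interval. Recall the interval notation: $P(\ell)=[\ell,f(\ell)-1]$, so $\top P(\ell)=S_\ell$ and $\soc P(\ell)=S_{f(\ell)-1}$. Likewise $I(j)=[g(j)+1,j]$, so $\top I(j)=S_{g(j)+1}$ and $\soc I(j)=S_j$.

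For (a), assume $f(\ell)-1<j$. We apply \Cref{lem::inequalitiesSocTop}(a) with $P=P(\ell)$ and $M=I(j)$. The hypothesis reads $\soc P(\ell)=S_{f(\ell)-1}<S_j=\soc I(j)$. The lemma then gives $\top P(\ell)<\top I(j)$, that is, $\ell<g(j)+1$, which is exactly $\ell-1<g(j)$.

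For (b), assume $j\le f(\ell)-1$. We apply \Cref{lem::inequalitiesSocTop}(b) with $I=I(j)$ and $M=P(\ell)$. The hypothesis $\soc I(j)=S_j\le S_{f(\ell)-1}=\soc P(\ell)$ gives $\top I(j)\le \top P(\ell)$, i.e. $g(j)+1\le \ell$, which is $g(j)\le \ell-1$.

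The only point needing care is the integer-labelling convention in the cyclic case. The comparisons $S_i\le S_j$ in \Cref{lem::inequalitiesSocTop} must be read with the chosen representatives in $\mathbb{Z}$, so we fix $P(\ell)=[\ell,f(\ell)-1]$ and $I(j)=[g(j)+1,j]$ as genuine integer intervals, consistent with how that lemma is stated. I expect no further obstacle: both parts are direct translations of the lemma once the tops and socles are written out. Alternatively, (b) is the contrapositive-type dual of (a) and could also be obtained by passing to $A^{op}$, where the roles of $f$ and $g$ are interchanged.
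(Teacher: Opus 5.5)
Your proof is correct and is exactly the paper's argument: apply Lemma~\ref{lem::inequalitiesSocTop}(a) with $P=P(\ell)$ and $M=I(j)$, and Lemma~\ref{lem::inequalitiesSocTop}(b) with $I=I(j)$ and $M=P(\ell)$. In both cases the tops and socles are read off from the integer intervals $[\ell,f(\ell)-1]$ and $[g(j)+1,j]$.
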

\begin{proof}
    Apply Lemma \ref{lem::inequalitiesSocTop} to the projective $P(\ell)=[\ell,f(\ell)-1]$ and the injective $I(j)=[g(j)+1,j].$ This is also part of \cite[Corollary 6.5]{Kla}.
    
\end{proof}

\section{Ringel's bijection and the Coxeter permutation}
We show that Ringel's homological permutation coincides with the Coxeter permutation for linear Nakayama algebras. Note that the Coxeter permutation depends on the ordering of simple modules. For linear Nakayama algebras, there exists a canonical ordering, we are going to fix this for this chapter:
\begin{equation}
\label{eq:canonical_labelling}
    1 \longrightarrow 2 \longrightarrow \ldots \longrightarrow n.
\end{equation}
Thus, when we talk about the Coxeter permutation of a linear Nakayama algebra, we always calculate it with respect to this canonical ordering.
As the second main result of this chapter (\Cref{thm::del(S)=e(S)}), we prove that in every Nakayama algebra, the delooping level of a simple module $S$ coincides with $e(S)$, the invariant appearing in Ringel's homological bijection.

\begin{theorem}
\label{thm::h=Coxeter_perm}
    Let $A$ be a linear Nakayama algebra. Then Ringel's homological permutation for $A$ coincides with the Coxeter permutation of $A.$
\end{theorem}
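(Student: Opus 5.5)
By Lemma \ref{bruhatfromcartan}, the Coxeter permutation $p_A$ is the row-permutation of $P[\omega_A]$, since $\omega_A$ is lower triangular for the canonical labelling (\ref{eq:canonical_labelling}). So it suffices to show $p_r(i)=\hat h(i)$ for every $i$, where $p_r$ is computed for $M=\omega_A$ by the formula of Proposition \ref{coxeterpermformula}. The point is that the rows and columns of $\omega_A$ are indicator vectors of intervals: row $k$ is $\underline{\dim}\, I(k)=\mathbf 1_{[g(k)+1,k]}$ and column $k$ is $\underline{\dim}\, P(k)=\mathbf 1_{[k,f(k)-1]}$. A linear dependence among truncated rows or columns is therefore a statement about alternating sums of dimension vectors of injectives or projectives, which we get from the resolutions (\ref{eq::injResolUsingg}) and (\ref{eq::projResolUsingf}). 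If the row formula points the wrong way relative to the available resolutions, I would instead use the column formula and prove $p_c=\hat h^{-1}$, using $e(S)=e^*(h(S))$.

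The plan has three steps.

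\textbf{Step 1 (interpret truncation).} Restricting a dimension vector to columns $\le j$ (or rows $\ge j$) amounts to replacing a module by its subquotient supported on that range. For interval modules this is again an interval, so it is the dimension vector of an explicit indecomposable (or zero) module.

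\textbf{Step 2 (dependence for $j<\hat h(i)$).} Fix $i$ and let $e=e(S_i)$. By Lemma \ref{lem::e(S)odd_or_even}, $e$ is odd exactly when $e=\pdim S_i$, in which case $N(S_i)=S_i$; otherwise $e=\pdim I(S_i)$ and $N(S_i)=I(S_i)$. Take the minimal projective resolution of $N(S_i)$ via $f$ up to step $e$. Its exactness gives $\underline{\dim}\, N(S_i)$ as an alternating sum of $\underline{\dim}\, P(f^k(\cdot))$ plus $\pm\underline{\dim}\,\Omega^e N(S_i)$, and $\top\Omega^e N(S_i)=S_{\hat h(i)}$. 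Reading this relation inside $\omega_A$ and truncating, all vectors except the one for $\Omega^e N(S_i)$ belong to the allowed spanning set. That term vanishes on the truncated range exactly when the range avoids $\hat h(i)$ and beyond. This shows the required vector lies in the span for all $j$ on one side of $\hat h(i)$.

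\textbf{Step 3 (independence at $j=\hat h(i)$).} I expect this to be the main obstacle. The inequalities in Lemma \ref{lem::inequalitiesSocTop} and Corollary \ref{cor::Nakayama_f_g_formulas} control the positions of the intervals $[k,f(k)-1]$ relative to $\hat h(i)$. With them I would argue by a triangularity or induction argument along the chain $i,\,f(i),\,f^2(i),\dots$ (respectively $g$) that any linear combination realising the vector must telescope to exactly the resolution above. Minimality of $e$, namely that $\pdim S_i$ and $\pdim I(S_i)$ are not both smaller, then forces a nonzero leftover entry at position $\hat h(i)$. Equivalently, a dependence at $j=\hat h(i)$ would yield a shorter resolution and contradict the minimality in the definition of $e(S_i)$. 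The parity case distinction in Lemma \ref{lem::e(S)odd_or_even} is exactly what makes the leftover term land at $\top\Omega^eN(S_i)$ rather than at some other vertex. Once Steps 2 and 3 hold for each $i$, the extremum in Proposition \ref{coxeterpermformula} equals $\hat h(i)$ (or its inverse statement), and the theorem follows.
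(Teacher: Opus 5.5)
Your proposal has a genuine gap: Step 2 as written does not fit the row formula you start from, and Step 3 is not an argument. There is, however, a straightforward fix along the column route you mention.

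\textbf{Where Step 2 fails.} In Proposition~\ref{coxeterpermformula} applied to $\omega_A$, the vector to be expressed is row $i$, i.e.\ $\underline{\dim}\,I(i)$ cut to coordinates $\le j$. The spanning set consists of the vectors $\underline{\dim}\,I(k)$, $k>i$, cut the same way. The resolution $0\to\Omega^eN(S_i)\to P_{e-1}\to\cdots\to P_0\to N(S_i)\to 0$ does something different: it relates $\underline{\dim}\,N(S_i)$ to \emph{columns} of $\omega_A$. For odd $e$ the left-hand side is $\underline{\dim}\,S_i$, not row $i$. Moreover, a cut-off $\underline{\dim}\,P_k$ is in general not a cut-off row $\underline{\dim}\,I(k')$ with $k'>i$.

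A concrete case: take the path algebra of $1\to 2\to 3$ and $i=2$. Then $e(S_2)=1$ and $\hat h(2)=3$. The relation $\underline{\dim}\,S_2=\underline{\dim}\,P(2)-\underline{\dim}\,P(3)$, cut to coordinates $\le 2$, reads $(0,1)=(0,1)-(0,0)$. This says nothing about row $2$, namely $(1,1)$, lying in the span of row $3$, namely $(1,1)$.

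To make the row route work you need an exact sequence of \emph{injectives} ending in $I(i)$, with all other terms $I(\ell)$ for $\ell>i$, starting at a module whose cut-off dimension vector is a later row. That is exactly the module $M$ built in Lemmas~\ref{lem::existanceOfM} and~\ref{lem::dimvectorofM_linear}, and it is the technical core of the paper's proof. Step 3 also does not hold up as stated: nothing forces every linear dependence to ``telescope'' to the given resolution.

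\textbf{The fix.} Your fallback to columns works, and is shorter than the paper's argument, but only with the roles reversed relative to Step 2. By Lemma~\ref{lem::e(S)odd_or_even}, $e=\pdim N(S_i)$, so $\Omega^eN(S_i)=P(\hat h(i))$ is projective. Exactness then gives
$\underline{\dim}\,P(\hat h(i))=\pm\bigl(\underline{\dim}\,N(S_i)-\sum_{k<e}(-1)^k\,\underline{\dim}\,P_k\bigr)$.
Every $P_k$ with $k<e$ has top strictly smaller than $\hat h(i)$, since tops strictly increase along a minimal projective resolution over a linear Nakayama algebra.

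Since $N(S_i)\in\{S_i,I(i)\}$ is supported on $\{1,\dots,i\}$, cutting to rows $\ge j$ for any $j>i$ kills that term. Hence $c_{\hat h(i)}(\omega_A,j)$ lies in the span of $c_1(\omega_A,j),\dots,c_{\hat h(i)-1}(\omega_A,j)$, so $p_c(\hat h(i))\le i$ for every $i$.

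With this, Step 3 is unnecessary. The map $p_c\circ\hat h$ is a permutation with $(p_c\circ\hat h)(i)\le i$ for all $i$, hence the identity. So $p_A=p_r=p_c^{-1}=\hat h$ by Lemma~\ref{bruhatfromcartan}. This is the same counting trick the paper uses to upgrade $p_A(i)\ge\hat h(i)$ to equality. You need neither $e(S)=e^*(h(S))$ nor any independence statement.

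What the paper's longer route buys is the module $M$ itself, which it reuses to prove $\del(S)=e(S)$.
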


For the proof of Theorem \ref{thm::h=Coxeter_perm} (as well as for Theorem \ref{thm::del(S)=e(S)}), we will need the following (quite technical) lemmas.

We refer the reader to \Cref{ex:M}, which demonstrates the statement of these lemmas.

\begin{lemma}
\label{lem::existanceOfM}
    Let $A=KQ/I$ be a Nakayama algebra with a vertex $i$ such that $e(S_i)\ge 2.$ Then there exists an indecomposable $A$-module $M$ with a minimal injective coresolution \begin{equation*}
    0\longrightarrow M \longrightarrow I^0  \longrightarrow I^1 \longrightarrow \ldots \longrightarrow I^d \longrightarrow 0
\end{equation*} such that the following statements hold:
    \begin{enumerate}[\rm (a)]
        \item $I^d\cong I(i)$
        \item $d=\idim M= e(S_i)-1.$
    \end{enumerate}
\end{lemma}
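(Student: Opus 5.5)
The plan is to work entirely with the interval description of indecomposable modules over a Nakayama algebra and the explicit resolution formulas (\ref{eq::injResolUsingg}) and (\ref{eq::projResolUsingf}). For $M=[b+1,a]$ the minimal injective coresolution has terms $I(a),I(b),I(g(a)),I(g(b)),\ldots$, so the task becomes: choose $a,b\in\mathbb{Z}$ such that the alternating sequence $a,b,g(a),g(b),g^2(a),\ldots$ has $i$ in position $d=e(S_i)-1$. We must also check that the coresolution stops exactly there, using the formula for $\idim M$ via coincidences $g^{k+1}(a)=g^{k+1}(b)$ or $g^{k+1}(a)=g^{k}(b)$. I would read off the candidates for $a,b$ from the projective side, since $e(S_i)$ is defined through $\pdim S_i$ and $\pdim I(i)$.

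I would split into cases using Lemma \ref{lem::e(S)odd_or_even}.

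\emph{Odd case} $e=e(S_i)=\pdim S_i$. Writing $S_i=[i,i]$, the resolution (\ref{eq::projResolUsingf}) runs through $i,i+1,f(i),f(i+1),\ldots$, and $e=2k+1$ is the first index where $f^{k+1}(i)=f^{k+1}(i+1)$.

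\emph{Even case} $e=\pdim I(i)$. Writing $I(i)=[g(i)+1,i]$, the sequence is $g(i)+1,i+1,f(g(i)+1),f(i+1),\ldots$.

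In each case I would take $M$ built from the last nontrivial step of this resolution, shifted by $-1$. Concretely, $a$ and $b$ are chosen among $f^{k}(\cdot)-1$, the socles of the last syzygies. I would then show, using Corollary \ref{cor::Nakayama_f_g_formulas}, that applying $g$ walks these values back down the $f$-orbit step by step, ending at $i$ after $d=e-1$ steps. This is a \enquote{reversal} of the projective resolution into an injective coresolution. Lemma \ref{lem::inequalitiesSocTop} supplies the comparisons between $g(f(\ell)-1)$ and $\ell-1$.

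The main obstacle is that Corollary \ref{cor::Nakayama_f_g_formulas} only gives the inequality $g(f(\ell)-1)\le \ell-1$, not equality. So the back-walk may land strictly below the corresponding vertex of the $f$-orbit, and the coresolution could terminate early or miss $i$. Here I would use the minimality in $e(S_i)=\min\{\pdim S_i,\pdim I(i)\}$. Suppose some step were strict, or a coincidence $g^{k+1}(a)=g^{k}(b)$ occurred too early. Then I would derive a coincidence of $f$-iterates forcing the \emph{other} of $\pdim S_i$, $\pdim I(i)$ to be smaller than $e(S_i)$, a contradiction. Carrying out this parity bookkeeping is where the work lies.

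Finally, the identity $\idim M=d$ follows once the coresolution is shown to reach $I(i)$ at step $d$ with the cosyzygy there equal to $I(i)$ itself, i.e.\ $g$ of the next index equals the previous one. The hypothesis $e(S_i)\ge2$ guarantees $d\ge1$, so that $M$ is a genuine non-injective module and the sequence has at least two terms.
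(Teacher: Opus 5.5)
Your even case matches the paper's choice of module. The odd case, however, has a genuine gap. There $M$ cannot be built from the resolution of $S_i$ alone, and the contradiction with minimality of $e(S_i)$ that you plan to derive does not exist.

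Take the linear Nakayama algebra with Kupisch series $(4,4,4,3,3,3,2,1)$ and $i=2$.
\begin{itemize}
\item The resolution of $S_2$ runs through $P(2)=[2,5]$, $P(3)=[3,6]$, $P(6)=[6,8]$, $P(7)=[7,8]$, so $\pdim S_2=3$.
\item $I(2)=[1,2]$ is resolved by $P(1),P(3),P(5),P(7),P(8)$, so $\pdim I(2)=4$ and $e(S_2)=3$ is odd.
\item Here $g(1)=\dots=g(4)=0$, $g(5)=1$, $g(6)=2$, $g(7)=4$, $g(8)=5$.
\end{itemize}
Suppose $M=[b+1,a]$ has $\idim M=2$ and $I^2\cong I(2)$. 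Then $g(a)=2$ forces $a=6$. Next, $g(b)=g^2(a)=0$, $b\neq g(a)$, and $[b+1,6]$ being a module together force $b\in\{3,4\}$. But the values your construction offers, $f^k(2)-1$ and $f^k(3)-1$, lie in $\{1,2,5,6,8\}$.

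Concretely, the minimal injective coresolution of $\Omega^3(S_2)=P(7)$ is $0\to P(7)\to I(8)\to I(6)\to I(5)\to I(2)\to I(1)\to 0$. So $I(2)$ does appear in position $e$, but the coresolution does not stop there. Its cosyzygy $S_6$ has injective dimension $3$ with last term $I(1)$. Nothing contradicts minimality, since $\pdim I(2)>\pdim S_2$ is exactly what the odd case allows.

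The missing idea is that termination is governed by $I(i)$, not by $S_i$. For $e=2k+1$ you need $g^k(b)=g(i)=\top I(i)-1$, so the endpoint $b$ must come from the $f$-orbit of $\top I(i)$. The paper therefore uses, in both parities, $M=I(R_{e-1})/\ker(d_{e-1})\cong\Omega^{-1}\Omega^{e}(I(i))$, where $R_\bullet$ is the minimal projective resolution of $I(i)$. Thus $\soc M=\soc R_{e-2}$ and $\top M=g(\soc R_{e-1})+1$. In the example this gives $M=I(7)/S_7=[5,6]$, with coresolution $I(6)\to I(4)\to I(2)$.

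Equality is then obtained not by contradiction but by a two-sided squeeze. One uses that the odd terms of the two resolutions are literally the same intervals, $P_{2r+1}=R_{2r+1}$. In the odd case one shows, in the relevant ranges of $r$,
\[
\top P_{e-2r-1}<\top I^{2r-2}\le \top P_{e-2r},\qquad
\top R_{e-2r-2}<\top I^{2r-1}\le\top R_{e-2r-1}\le\top P_{e-2r-1}.
\]
These follow by induction via Corollary \ref{cor::Nakayama_f_g_formulas}, from base cases that use $\pdim S_i\ge e$ and $\pdim I(i)\ge e$ simultaneously. The squeeze gives:
\begin{itemize}
\item strictly decreasing tops up to position $e-1$;
\item $\top I^{e-3}=i+1$, hence $I^{e-1}\cong I(i)$;
\item $\top I^{e-2}=\top I^{e-1}$, hence $\idim M=e-1$.
\end{itemize}
The even case is analogous and likewise needs both resolutions, so your single-resolution contradiction argument should be replaced by this squeeze there too.
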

\begin{proof}
We start by fixing a minimal projective resolution of $S_i$ and one of $I(i):$

\[
\ldots 
\longrightarrow \textcolor{orange}{P_3} 
\longrightarrow \textcolor{purple}{P_2} 
\longrightarrow \textcolor{orange}{P_1} 
\longrightarrow \textcolor{purple}{P_0} 
\longrightarrow S_i 
\longrightarrow 0
\]

and  
\[
\ldots 
\xrightarrow{d_4} \textcolor{orange}{R_3} 
\xrightarrow{d_3} \textcolor{blue}{R_2} 
\xrightarrow{d_2} \textcolor{orange}{R_1} 
\xrightarrow{d_1} \textcolor{blue}{R_0} 
\xrightarrow{d_0} I(i) 
\longrightarrow 0.
\]

It is quite natural to look at these resolutions as they appear in the definition of $e(S_i)$ and $h(S_i).$
In the following, we will prove (a) and (b) for a specific choice of $M$. Consider the natural inclusions $\ker (d_{e(S_i)-1}) \hookrightarrow R_{e(S_i)-1} \hookrightarrow I(R_{e(S_i)-1}),$ where $I({R_{e(S_i)-1}})$ denotes the injective envelope of ${R_{e(S_i)-1}}.$ 
Via this, we can regard $\ker (d_{e(S_i)-1})$ as a submodule of $I(R_{e(S_i)-1})$ and set $M$ to be the factor module 
$$M\cong I(R_{e(S_i)-1})/\ker (d_{e(S_i)-1}).$$

The idea behind this particular choice of $M$ is that in this case, we can approximate the terms in a minimal injective coresolution of $M$ using the projective resolutions of $S_i$ and $I(i).$ 
We dedicate the rest of this proof to showing that this is a correct choice. This will be quite technical. 

We will use the notation introduced in Section \ref{subsec::Nakayama_notation}, and think of all terms in these resolutions as intervals in $\mathbb{Z}.$
Then each resolution consists of two sequences of neighbouring intervals, one for the odd terms, and one for the even terms.
This is illustrated in \Cref{fig:resolutions}.
Since we will compare socles and tops of two modules as integers, it will be important not to work modulo $n$.
So let us fix vertex $i$ as a specific element of $\mathbb{Z}.$ 
Then $S(i)=[i]$, $I(i)=[g(i)+1,i]$ and using (\ref{eq::projResolUsingf}), one can express the even terms in the projective resolution of $S_i$ (resp. $I(i)$) as $\textcolor{purple}{P_{2r}}=P(f^r(i))=[f^r(i),f^{r+1}(i)-1]$ (resp. $\textcolor{blue}{R_{2r}}=P(f^r(c))=[f^r(c),f^{r+1}(c)-1]$ where $c=\top I(i)$) and the odd terms as $\textcolor{orange}{P_{2r+1}}=P(f^r(i+1))=[f^r(i+1),f^{r+1}(i+1)-1]=\textcolor{orange}{R_{2r+1}}.$
In particular,
\begin{equation}
\label{eq::odd_terms_coincide}
     \textcolor{orange}{P_{2r+1}}= \textcolor{orange}{R_{2r+1}}
\end{equation}
for every $e(S_i)\ge 2r+1\ge 0$.
Importantly, this is not only an isomorphism of modules, but an equality of the corresponding intervals in $\mathbb{Z}.$

Note that since $2\le e(S_i)\le \pdim I(i)$ and since $I(R_{e(S_i)-1})$ is uniserial, we have 
\begin{equation}
\label{eq:socM}
    \soc M \cong \soc \im(d_{e(S_i)-1}) \cong \soc R_{e(S_i)-2}.
\end{equation}
Then we associate $M$ with the interval $[b+1,j]\subsetneq \mathbb{Z}$, where $j=\soc R_{e(S_i)-2}.$  
Using (\ref{eq::injResolUsingg}), we can express the even terms in the resolution of $M$ as $\textcolor{ForestGreen}{I^{2r}}=I(g^r(j))=[g^{r+1}(j)+1,g^r(j)]$ and the odd terms as $\textcolor{ForestGreen}{I^{2r+1}}=I(g^r(b))=[g^{r+1}(b)+1,g^r(b)]$.

To prove that $M$ satisfies (a) and (b), we distinguish two cases.
First, we consider the case when $e(S_i)$ is odd. By Lemma \ref{lem::e(S)odd_or_even}, we have $\pdim S_i=e(S_i)$ in this case.  

We prove $\idim M \ge e(S_i)-1$ by showing 
$\top \textcolor{ForestGreen}{I^{r}}<\top \textcolor{ForestGreen}{I^{r-1}}$ for all $r<e(S_i)-1$. We do this by approximating the tops of the $\textcolor{ForestGreen}{I^r}$ by the tops of the terms appearing in the projective resolutions of the modules $S_i$ and $I(i).$ 
More precisely, we will show that  

\begin{equation}
\label{eq::thirdApproximation}
    \top \textcolor{orange}{R_{e(S_i)-2(r+1)}}< \top \textcolor{ForestGreen}{I^{2r-1}} \le \top \textcolor{blue}{R_{e(S_i)-(2r+1)}}
\end{equation}
\begin{equation}
\label{eq::fourthApproximation}
 \top \textcolor{purple}{P_{e(S_i)-(2r+1)}}< \top \textcolor{ForestGreen}{I^{2(r-1)}}\le
    \top \textcolor{orange}{P_{e(S_i)-2r}}
\end{equation}
\begin{equation}
\label{eq::fifthApproximation}
    \top \textcolor{blue}{R_{e(S_i)-(2r+1)}}\le \top \textcolor{purple}{P_{e(S_i)-(2r+1)}}.
\end{equation}

Here, the first inequality in (\ref{eq::thirdApproximation}) holds for every $1\le r\le \frac{e(S_i)-3}{2}$, while all other inequalities work for all $1\le r\le \frac{e(S_i)-1}{2}$.
Then (\ref{eq::thirdApproximation}), (\ref{eq::fourthApproximation}) and (\ref{eq::fifthApproximation}) combined prove $\top \textcolor{ForestGreen}{I^r}< \top \textcolor{ForestGreen}{I^{r-1}}$ for all $1\le r< e(S_i)-1.$ The inequalities (\ref{eq::thirdApproximation}) and (\ref{eq::fourthApproximation}) are depicted in \Cref{fig:resolutions}.

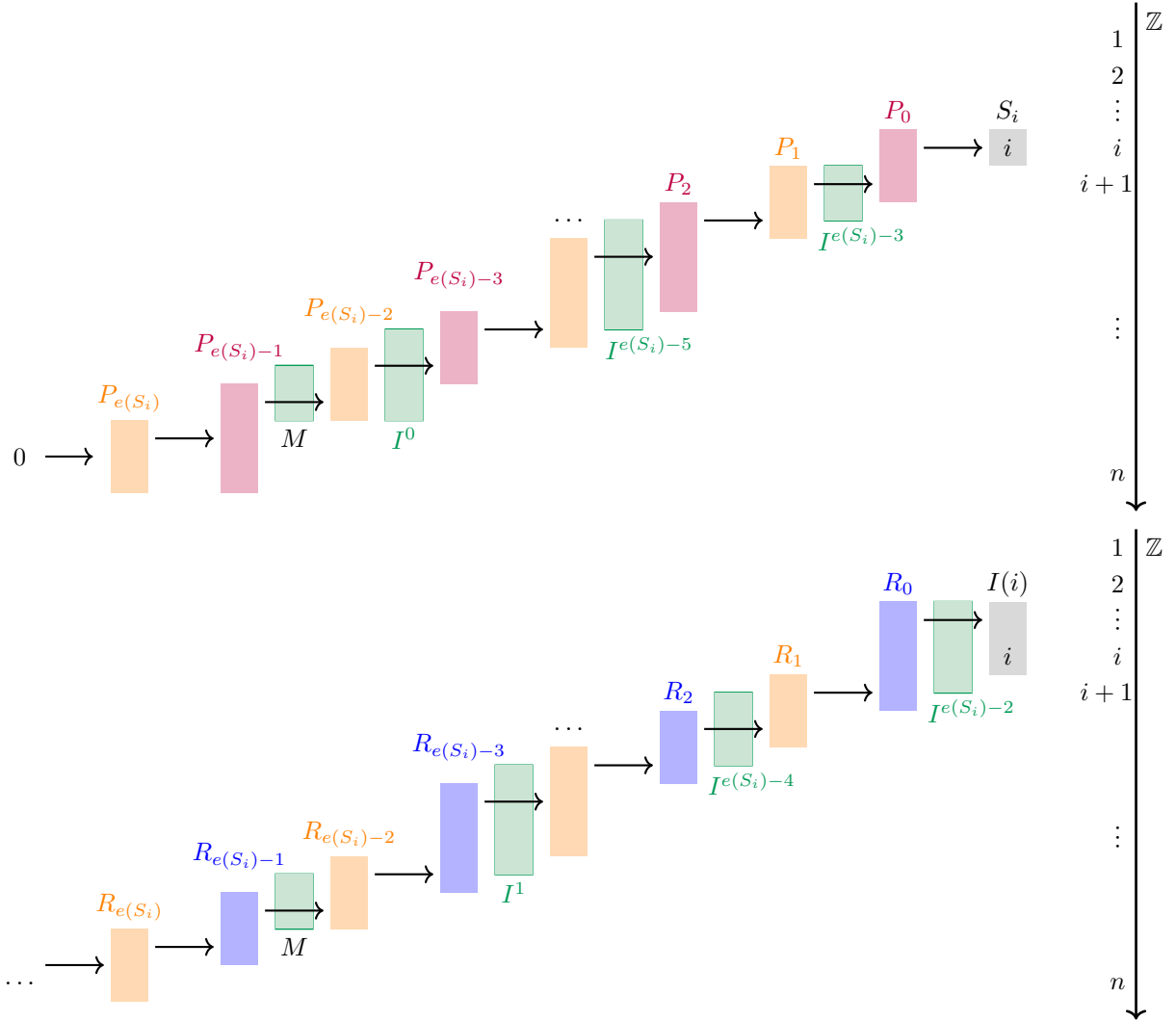
\begin{figure}
    \centering
    \input{fig_resolutions}
    \caption{A picture of a minimal projective resolution of $S_i$ (top) and $I(i)$ (bottom). The modules are depicted as intervals in $\mathbb{Z}$. In the resolution of $S_i$ (resp. $I(i)$), the even terms are shown in purple (resp. blue), while the odd terms are orange. Note that $\textcolor{orange}{P_j=R_j}$ for all odd $j.$ This picture corresponds to the case when $e(S_i)$ is odd.
    The green intervals represent the terms of a minimal injective coresolution of $M$, the module defined in the proof of Lemma \ref{lem::existanceOfM}.
    The diagram indicates how these intervals align with the terms of the two resolutions. 
    }
    \label{fig:resolutions}
\end{figure}

We can rephrase the above inequalities as

\begin{equation}
\label{eq::thirdApproximation_rephrased}
    f^{\frac{e(S_i)-2r-3}{2}}(i+1)< g^r(b)+1 \le f^{\frac{e(S_i)-2r-1}{2}}(c)
\end{equation}
\begin{equation}
\label{eq::fourthApproximation_rephrased}
  f^{\frac{e(S_i)-2r-1}{2}}(i)< g^{r}(j)+1 \le f^{\frac{e(S_i)-2r-1}{2}}(i+1)
\end{equation}
\begin{equation}
\label{eq::fifthApproximation_rephrased}
   f^{\frac{e(S_i)-2r-1}{2}}(c)\le f^{\frac{e(S_i)-2r-1}{2}}(i).
\end{equation}

We will prove these by induction. Note that if (\ref{eq::thirdApproximation_rephrased}) holds for some $r$, then we can apply Corollary \ref{cor::Nakayama_f_g_formulas} to obtain that it must also hold for $r+1$ provided the exponent of $f$ is still non-negative. For the first inequality, this is the condition $\frac{e(S_i)-2r-3}{2}\ge 1$, which is equivalent to $r+1\le \frac{e(S_i)-3}{2}.$ In the second, the condition is $r+1\le \frac{e(S_i)-1}{2}.$
The same is true for (\ref{eq::fourthApproximation_rephrased}). 
For (\ref{eq::fifthApproximation_rephrased}), the induction works in the other direction. 
Namely, the monotonicity of $f$ implies that if the inequality holds for an $r,$ it also holds for $r-1.$
So in all of these cases, it remains to show the base case of the induction.

First of all, $f^0(c)=c=\top I(i)\le \soc I(i) = i=f^0(i).$ 
This proves the base case for (\ref{eq::fifthApproximation_rephrased}).

Second, note that 
$\soc \textcolor{purple}{P_{e(S_i)-3}}<\soc M =\soc \textcolor{orange}{P_{e(S_i)-2}}.$ 
This holds as $\pdim S_i=e(S_i)\ge 3.$
This inequality can be rephrased as $f^{\frac{e(S_i)-1}{2}}(i)-1<j=f^{\frac{e(S_i)-1}{2}}(i+1)-1.$
Applying Corollary \ref{cor::Nakayama_f_g_formulas} yields exactly the base case for (\ref{eq::fourthApproximation_rephrased}).

Finally, observe that  $\soc\textcolor{orange}{R_{e(S_i)-2}}<\soc I(\textcolor{blue}{R_{e(S_i)-1}})= \soc\textcolor{blue}{R_{e(S_i)-1}}.$ 
The first inequality holds since $\pdim I(i)\ge e(S_i).$
Then we can apply Lemma \ref{lem::inequalitiesSocTop} to obtain $\top\textcolor{orange}{R_{e(S_i)-2}}<\top I(\textcolor{blue}{R_{e(S_i)-1}})\le  \top\textcolor{blue}{R_{e(S_i)-1}}$.
Since $b+1=\top M = \top I(\textcolor{blue}{R_{e(S_i)-1}})$, this is equivalent to $f^{\frac{e(S_i)-3}{2}}(i+1)<b+1\le f^{\frac{e(S_i)-1}{2}}(c).$ 
Applying Corollary \ref{cor::Nakayama_f_g_formulas} yields the base case for the induction for (\ref{eq::thirdApproximation_rephrased}).
This concludes the proof of $$\idim M\ge e(S_i)-1.$$

Note that (\ref{eq::fourthApproximation}) for $r=\frac{e(S_i)-1}{2}$ yields $i=\top \textcolor{purple}{P_0}<\top \textcolor{ForestGreen}{I^{e(S_i)-3}}\le \top \textcolor{orange}{P_1}=i+1.$ Hence, $\top \textcolor{ForestGreen}{I^{e(S_i)-3}}=i+1$ and $\soc \textcolor{ForestGreen}{I^{e(S_i)-1}}=i,$ which proves 
$$\textcolor{ForestGreen}{I^{e(S_i)-1}}\cong I(i).$$ 

The inequality (\ref{eq::thirdApproximation}) for $r=\frac{e(S_i)-1}{2}$ yields $\top \textcolor{ForestGreen}{I^{e(S_i)-2}}\le \top \textcolor{blue}{R_0} = \top I(i).$ Thus, thanks to the isomorphism above, $\top \textcolor{ForestGreen}{I^{e(S_i)-2}}\le \top \textcolor{ForestGreen}{I^{e(S_i)-1}},$ which further implies $\top \textcolor{ForestGreen}{I^{e(S_i)-2}}= \top \textcolor{ForestGreen}{I^{e(S_i)-1}}$ and $$\idim M=e(S_i)-1.$$

This concludes the proof of the lemma in the odd case. The case when $e(S_i)$ is even can be done analogously. In that case, the initial inequalities in the base case are  $\soc \textcolor{orange}{R_{e(S_i)-3}}< \soc \textcolor{ForestGreen}{I^0} = \soc \textcolor{blue}{R_{e(S_i)-2}}$ and  $\soc \textcolor{purple}{P_{e(S_i)-4}}< \soc \textcolor{ForestGreen}{I^1} \le \soc \textcolor{orange}{P_{e(S_i)-3}}$. These allow us to prove inequalities analogous to (\ref{eq::thirdApproximation}) and (\ref{eq::fourthApproximation}).
\end{proof}

For the next lemma, recall that if $B$ is an $n\times n$ matrix and $i,j\in \{1,\ldots,n\}$ then $r_j(B,i)$ denotes the $j$-th row of $B$ from column $1$ to column $i$ as defined for \Cref{coxeterpermformula}. 
If we have a $1\times n$ vector $v=(v_1,\ldots,v_n)$ then set $\textcolor{blue}{v_{[i:j]}}=(v_i,v_{i+1},\ldots,v_j).$ 
We also remind the reader that $\hat{h}$ stands for Ringel's homological permutation as defined in \Cref{subsec::permutations_on_simples}.

\begin{lemma}
\label{lem::dimvectorofM_linear}
    Let $A=KQ/I$ be a linear Nakayama algebra with vertices $Q_0=\{1,2,\ldots,n\}$. Let $i\in Q_0$ such that $e(S_i)\ge 2.$ Then in Lemma \ref{lem::existanceOfM}, the module $M$ can be chosen such that 
    
    $$\underline{\dim}(M)_{[1:\hat{h}(i)-1]}=r_j(\omega_A,\hat{h}(i)-1) \text{ and }
    \underline{\dim}(M)_{[\hat{h}(i):n]}=0$$
    for some $j>i.$ In particular, $M$ contains no simple $S_{\ell}$ with $\ell\ge \hat{h}(i)$ as a composition factor.
\end{lemma}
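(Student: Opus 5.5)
The plan is to take the specific module $M$ constructed in the proof of Lemma \ref{lem::existanceOfM}, namely $M\cong I(R_{e(S_i)-1})/\ker(d_{e(S_i)-1})$, and read off its dimension vector in interval notation. Since $A$ is a linear Nakayama algebra, no reduction modulo $n$ is needed. The $j$-th row of $\omega_A$ is the dimension vector of $I(j)=[g(j)+1,j]$: it has entries $1$ exactly in the columns $g(j)+1,\ldots,j$ and $0$ elsewhere. Writing $M=[b+1,a]$ with $a=\soc M$, its dimension vector is the indicator vector of $[b+1,a]$. The statement therefore reduces to two claims: (i) $a=\hat{h}(i)-1$, and (ii) there is a vertex $j>i$ with $g(j)=b$ and $j\ge a$. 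Given these, truncating row $j$ of $\omega_A$ at column $\hat h(i)-1=a$ gives exactly the indicator of $[g(j)+1,a]=[b+1,a]$, and all coordinates of $\underline{\dim}(M)$ from $\hat h(i)$ on vanish.

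For (ii), I take $j:=\soc I(R_{e(S_i)-1})=\soc R_{e(S_i)-1}$. Since $I(R_{e(S_i)-1})=I(j)=[g(j)+1,j]$ is uniserial and $M$ is a nonzero factor module of it, we get $\top M=g(j)+1$, i.e. $b=g(j)$. Moreover $M$ is a factor module of $I(j)$, so its socle lies within the interval $[g(j)+1,j]$, which gives $a\le j$. To see $j>i$, note that $e(S_i)-1\ge 1$. By (\ref{eq::projResolUsingf}) every term of index $\ge 1$ in the projective resolutions of $S_i$ and $I(i)$ has top at least $i+1$, because $f$ is increasing and $f^0(i+1)=i+1$, $f(c)\ge f(g(i)+1)>i$. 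Hence $j=\soc R_{e(S_i)-1}\ge \top R_{e(S_i)-1}\ge i+1$.

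For (i), I use (\ref{eq:socM}), which says $a=\soc R_{e(S_i)-2}$, and compare it with $h(S_i)$ using the explicit interval description from the proof of Lemma \ref{lem::existanceOfM}. I split into two cases according to the parity of $e:=e(S_i)$.
\begin{enumerate}[\rm (1)]
\item If $e$ is odd, then $N(S_i)=S_i$. The top of $\Omega^{e}(S_i)$ is the top of $P_e=P(f^{(e-1)/2}(i+1))$, so $\hat h(i)=f^{(e-1)/2}(i+1)$. On the other hand, $R_{e-2}=P_{e-2}=[f^{(e-3)/2}(i+1),f^{(e-1)/2}(i+1)-1]$ by (\ref{eq::odd_terms_coincide}). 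Hence $a=\hat h(i)-1$.
\item If $e$ is even, then $N(S_i)=I(i)$. The top of $\Omega^e(I(i))$ is the top of $R_e=P(f^{e/2}(c))$, so $\hat h(i)=f^{e/2}(c)$. Also $R_{e-2}=[f^{(e-2)/2}(c),f^{e/2}(c)-1]$, so again $a=\hat h(i)-1$.
\end{enumerate}

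The main point needing care is that these identifications of $\Omega^{e}$ with tops of the $e$-th terms are valid: the resolutions must not have terminated before step $e$. This holds because $e\le\pdim S_i$ and $e\le\pdim I(i)$. Additionally, $\hat h(i)\le n$ because $\hat h$ is a permutation, so the truncation at column $\hat h(i)-1$ makes sense (and is vacuous if $\hat h(i)=1$).
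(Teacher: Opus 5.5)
Your proof is correct and follows essentially the paper's argument: you take $j=\soc R_{e(S_i)-1}$, use $\soc M=\soc R_{e(S_i)-2}$ from (\ref{eq:socM}), identify this with $\hat h(i)-1$ by the parity split (using $R_{e-2}=P_{e-2}$ in the odd case), and then read off row $j$ of the Cartan matrix. You spell out the interval bookkeeping ($\top M=g(j)+1$, $\soc M\le j$, $j\ge i+1$) more explicitly than the paper, which leaves these points to ``our previous observations'', but the route is the same.
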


\begin{proof}
In the proof of Lemma \ref{lem::existanceOfM}, we have shown that $M$ can be chosen to be a factor module of $I(j)$, where $S_j\cong \soc R_{e(S_i)-1}$. 
As $R_{e(S_i)-1}$ is part of the minimal projective resolution of $I(i)$ and $I(i)$ is not projective, we have $j>i.$

By (\ref{eq:socM}), this $M$ has socle isomorphic to $\soc R_{e(S_i)-2}\cong S_m.$ 
Since $A$ is a linear Nakayama algebra, all composition factors of $M$ are different, and $M$ has no composition factor $S_{\ell}$ with $\ell>m.$ 
Our goal is to show that $m=\hat{h}(i)-1,$ which is equivalent to $\top R_{e(S_i)}\cong S_{h(S_i).}$
If $e(S_i)$ is even, $\top R_{e(S_i)}\cong S_{h(S_i)}$ by the definition of $h$.
If $e(S_i)$ is odd, $\top P_{e(S_i)}\cong S_{h(S_i)},$ but recall that in this case $R_{e(S_i)}= P_{e(S_i)}$ by (\ref{eq::odd_terms_coincide}), so the statement still holds. 
In conclusion, $m=\hat{h}(i)-1.$

Finally, recall that the $\ell$-th row in the Cartan matrix of $A$ is the dimension vector of $I(\ell).$ 
This, combined with our previous observations, implies 
$\underline{\dim}(M)_{[1:\hat{h}(i)-1]}=r_j(\omega_A,\hat{h}(i)-1)$ and $\underline{\dim}(M)_{[\hat{h}(i):n]}=0$.
\end{proof}

\begin{remark}
    \label{rmk::cyclic_case}
    In the cyclic case, the statement of Lemma \ref{lem::dimvectorofM_linear} is false in general for the choice of $M$ which was described in the proof of Lemma \ref{lem::existanceOfM}.    
     
\end{remark}

Let us demonstrate the statements of \Cref{lem::existanceOfM} and \ref{lem::dimvectorofM_linear} on an example.

\begin{example}
\label{ex:M}
    Let $A$ be the linear Nakayama algebra from \Cref{ex::Nakayama}. 
    We have $S_1=I(1)$ and calculation gives $e(S_1)=\pdim(S_1)=\pdim (I(1))=2.$
    Indeed, we have the following minimal projective resolution:
    $$0\rightarrow P(5) \xrightarrow{d_2} P(2) \xrightarrow{d_1} P(1) \rightarrow S_1 = I(1) \rightarrow 0, $$
    where we can describe the terms appearing in terms of their composition series as 
    $$P(5)=5, \ P(2)= \begin{smallmatrix}
        2 \\
        3 \\
        4 \\
        5 \\
    \end{smallmatrix}, \
    P(1)= \begin{smallmatrix}
        1\\
        2 \\
        3 \\
        4 \\
    \end{smallmatrix},\
    S_1=I(1)= 1. 
    $$
    Then following the proof of \Cref{lem::existanceOfM}, set $M:= I(P_1)/\ker(d_1)\cong P(2)/P(5) \cong \begin{smallmatrix}
        2 \\
        3 \\
        4 \\
    \end{smallmatrix}$. 
    This module has a minimal injective coresolution
    $$0\rightarrow M \rightarrow I(4) \rightarrow I(1) \rightarrow 0.$$
    Thus, we have $\idim(M)=1=e(S_1)-1$ and the last term in the above resolution is $I(1),$ as predicted by \Cref{lem::existanceOfM}.
    
    Moreover, we obtain $\underline{\dim}(M)=(0,1,1,1,0)$ and $\hat{h}(1)=5$.
    Then the proof of \Cref{lem::dimvectorofM_linear} tells us that we should consider row $5$ of the Cartan matrix of $A.$ This was already calculated in \Cref{ex::Nakayama} to be $(0,1,1,1,1)$.
    This yields $\underline{\dim}(M)_{[1:4]}=(0,1,1,1)=r_5(\omega_A,4)$ and $\underline{\dim}(M)_{[5:5]}=0$ as expected from \Cref{lem::dimvectorofM_linear}.

\end{example}

\begin{proof}[Proof of Theorem \ref{thm::h=Coxeter_perm}]
For a linear Nakayama algebra the Cartan matrix $\omega_A$ is lower triangular, so by Lemma \ref{bruhatfromcartan}, we can read off the Coxeter permutation $p_A$ from a Bruhat factorisation of the Cartan matrix.
More precisely, $p_A:\{1,\ldots,n\}\rightarrow\{1,\ldots,n\}$ is the row-permutation associated to $P[\omega_A].$

Our goal is to show 
\begin{equation}
\label{eq::pi_smallereq_h}
    p_A(i)\ge \hat{h}(i)
\end{equation} for every vertex $i.$ Then, since both $p_A$ and $\hat{h}$ are permutations on the set $\{1,\ldots,n\},$ they must coincide. To calculate $p_A$, we will use the second formula from Proposition \ref{coxeterpermformula} for the matrix $\omega_A$. More precisely, we will prove that for every $i\in\{1,\ldots,n\}$ we have
\begin{equation}
\label{eq:span}
    r_i(\omega_A,\hat{h}(i)-1)\in \langle  r_{i+1}(\omega_A,\hat{h}(i)-1), r_{i+2}(\omega_A,\hat{h}(i)-1), \cdots , r_n(\omega_A,\hat{h}(i)-1) \rangle_K .
\end{equation}
 This implies (\ref{eq::pi_smallereq_h}). 

Throughout this proof, we will heavily rely on the fact that the $j$-th row of $\omega_A$ coincides with the dimension vector of the indecomposable injective $I(j),$ see for example \cite[Chapter III, Proposition 3.8]{ASS}.

It is easy to see (\ref{eq:span}) if $I(i)$ is projective-injective. Indeed, if $I(i)$ is projective, $e(S_i)=0$ and $S_{\hat{h}(i)}\cong \top I(i).$ Then for every $j\ge i$, $I(j)$ has no composition factor $S_k$ with $k\le \hat{h}(i)-1.$ Thus, $r_j(\omega_A,\hat{h}(i)-1)=0$ for every $j\ge i,$ which proves the statement. 
 
 Another special case with which we would like to deal separately is the case when $I(i)$ is not projective but $\pdim S_i=1.$ In this case, $e(S_i)=1$, $\Omega(S_i)\cong P_{i+1}$ and $\hat{h}(i)=i+1.$ Moreover, as $I(i)$ is not projective, $(\underline{\dim}\  I(i))_j=(\underline{\dim}\  I(i+1))_j$ for $j\le i=\hat{h}(i)-1.$ Thus, $r_i(\omega_A,\hat{h}(i)-1)=r_{i+1}(\omega_A,\hat{h}(i)-1)$ and (\ref{eq:span}) holds in this case as well.

 From now on, we assume that $I(i)$ is not projective and $\pdim S_i>1$. Note that this is equivalent to $e(S_i)\ge 2$ by \Cref{lem::e(S)odd_or_even}. Thus, we can apply Lemma \ref{lem::existanceOfM} and \ref{lem::dimvectorofM_linear} to obtain an indecomposable $A$-module $M$ together with a minimal injective coresolution 
 \begin{equation}
    0\rightarrow M \rightarrow I^0  \rightarrow I^1 \rightarrow \ldots \rightarrow I^d \rightarrow 0
\end{equation}
satisfying $\underline{\dim}(M)=r_{j}(\omega_A,\hat{h}(i)-1)$ for some $j>i$ and  $I^d\cong I(i)$. 

Since a minimal injective coresolution is exact, we can express $\underline{\dim}(I^d)$ as a linear combination of $\underline{\dim}(I^{0}), \  \underline{\dim}(I^{1}),\  \ldots,\ \underline{\dim}(I^{d-1})$ and $\underline{\dim}(M)$. The same is true if we restrict these dimension vectors only to their entries in places $[1,2,\ldots, \hat{h}(i)-1].$
Note that $I^d\cong I(i)$ implies that every other term $I^r$ with $r<d$ is of the form $I^r\cong I(\ell)$ for some $n\ge \ell>i$.
This follows, for example, from the description of injective resolutions in Nakayama algebras, see (\ref{eq::injResolUsingg}).
 Using the fact that the $\ell$-th row of $\omega_A$ equals $\underline{\dim}(I(\ell)),$ and that $\underline{\dim}(M)_{[1:\hat{h}(i)-1]}=r_j(\omega_A,\hat{h}(i)-1)$ for some $j>i$,  
 we obtain a formula for $r_i(\omega_A,\hat{h}(i)-1)$ as a linear combination of the vectors $r_{i+1}(\omega_A,\hat{h}(i)-1),\ldots,r_n(\omega_A,\hat{h}(i)-1)$.
\end{proof}

The next example shows that in the cyclic case, there is in general no linear order that is a cyclic shift of the order $1<2<3<4<5$ so that Theorem \ref{thm::h=Coxeter_perm} holds:
\begin{example}
Let $A=KQ/I$ be the cyclic Nakayama algebra with quiver $Q$ given by 
\[\begin{tikzcd}
	1 & 2 \\
	4 & 3
	\arrow["{a_1}", from=1-1, to=1-2]
	\arrow["{a_2}", from=1-2, to=2-2]
	\arrow["{a_4}", from=2-1, to=1-1]
	\arrow["{a_3}", from=2-2, to=2-1]
\end{tikzcd}\]
and relations $I=\langle a_1 a_2, a_3 a_4 a_1 \rangle.$
In this example, Ringel's homological permutation is given by
\[
\left(
\begin{array}{cccc}
1 & 2 & 3 & 4 \\
2 & 4 & 3 & 1
\end{array}
\right)
\]
Calculation gives that there is no cyclic shift of the ordering of the vertices for which the Coxeter permutation coincides with Ringel's homological permutation.

\end{example}

Our methods in the proof of \Cref{lem::existanceOfM} also allow us to show the following statement, which was posed as a question by Ringel in \cite[Remark 3.9]{R}. Note that this theorem holds for every Nakayama algebra, not only for the linear ones. 

\begin{theorem}
\label{thm::del(S)=e(S)}
Let $A$ be a Nakayama algebra and $S$ a simple $A$-module. Then $\del(S)=e(S)$.

\end{theorem}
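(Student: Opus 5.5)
We prove the two inequalities $\del(S)\ge e(S)$ and $\del(S)\le e(S)$ separately.

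\textbf{Lower bound.} For $\del(S_i)\ge e(S_i)$ we use Lemma \ref{lem:dell_lowerbound}. The cases $e(S_i)\in\{0,1\}$ need separate treatment. If $e(S_i)=0$ there is nothing to show. If $e(S_i)=1$, then $S_i$ is not projective, since otherwise $\pdim S_i=0$; hence $\del(S_i)\ge 1$ trivially, because $\Omega^0(S_i)=S_i$ is not a first syzygy. Indeed, a simple non-projective module over a Nakayama algebra cannot embed into a projective module, since $\soc$ of an indecomposable projective $P(\ell)$ being $S_i$ would force $S_i$ to be a subquotient of the correct type; failing that, we argue that the first syzygy property would give $S_i\cong\Omega(X)$, contradicting $\pdim S_i=1$ only if $S_i$ is not projective.

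For $e(S_i)\ge 2$, Lemma \ref{lem::existanceOfM} supplies an indecomposable module $M$ with $\idim M=d=e(S_i)-1$ and last term $I^d\cong I(i)$. By Lemma \ref{Bensonlemma}(2), $\Ext^d(S_i,M)\ne 0$. This only yields $\del(S_i)\ge e(S_i)-1$, which is one too small. To fix this, we take $N=\Omega^{-1}$-shift in the other direction: choose $N$ with $\Omega^{-1}(N)$-type relation to $M$. Concretely, take $N:=\ker(I(R_{e(S_i)-1})\to\ldots)$, i.e.\ the kernel $\ker(d_{e(S_i)-1})$, which sits in the short exact sequence $0\to \ker(d_{e(S_i)-1})\to I(R_{e(S_i)-1})\to M\to 0$. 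Then $\idim N=d+1=e(S_i)$ and its last injective term is still $I(i)$, since the middle term is injective and $M$ is not injective. Lemma \ref{Bensonlemma}(2) gives $\Ext^{e(S_i)}(S_i,N)\neq 0$, and Lemma \ref{lem:dell_lowerbound} gives $\del(S_i)\ge e(S_i)$. The main point to verify is that the coresolution of $N$ is exactly $I(R_{e(S_i)-1})$ followed by that of $M$, i.e.\ that $I(R_{e(S_i)-1})$ is an injective envelope of $N$. This holds because $N$ and $I(R_{e(S_i)-1})$ are uniserial with the same socle.

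\textbf{Upper bound.} For $\del(S)\le e(S)$, set $t=e(S)$. We must show that $\Omega^t(S)$ is a $(t+1)$-th syzygy. If $t=\pdim S$ this is immediate, since $\Omega^t(S)$ is projective. Otherwise $t=\pdim I(S)<\pdim S$, and $t$ is even by Lemma \ref{lem::e(S)odd_or_even}. In this case we compare the projective resolutions of $S=[i]$ and $I(S)=[c,i]$ via (\ref{eq::projResolUsingf}). The odd terms coincide, and $\Omega^t(S)=[f^{t/2}(i),f^{t/2}(i+1)-1]$, while $\Omega^{t}(I(S))$ is projective, equal to $P(f^{t/2}(c))$ with $f^{t/2+1}(c)=f^{t/2}(i+1)$. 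Thus $\Omega^t(S)$ is a quotient of a projective by a submodule, and more usefully $0\to\Omega^t(S)\to\Omega^t(I(S))\to Y\to 0$ after tracking; in interval terms $\Omega^t(S)$ is a submodule of the projective $P(f^{t/2}(c))$ with cokernel $[f^{t/2}(c),f^{t/2}(i)-1]=\Omega^{t}$ of the module $I(S)/S$ up to shift. Iterating this, $\Omega^t(S)\cong\Omega(\Omega^{t}(I(S)/S))$ up to projective summands, and $I(S)/S$ is itself a quotient of an injective, giving the extra syzygy degree. Making this iteration precise, namely showing that $\Omega^t(S)\cong\Omega^{t+1}(X)$ for $X=I(S)/S$ (possibly shifted), is the step I expect to require the most care with the $f$-interval bookkeeping.
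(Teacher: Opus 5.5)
\textbf{The gap is the case $e(S_i)=1$.} Your argument there uses only that $S_i$ is not projective. The claim you rely on, that a non-projective simple module over a Nakayama algebra cannot embed into a projective module, is false.

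Take the linear Nakayama algebra with Kupisch series $(2,2,1)$. Here $S_2$ is not projective and $\pdim S_2=1$, yet $S_2=\soc P(1)\subseteq P(1)$, so $\del(S_2)=0$. This is consistent with the theorem, since $I(2)=P(1)$ gives $e(S_2)=0$. The same example shows that being a first syzygy does not contradict $\pdim S_i=1$, so your fallback also fails. Non-projectivity of $S_i$ alone can never give $\del(S_i)\ge 1$.

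The missing input is the information carried by $\pdim I(S_i)$. Since $e(S_i)=\min\{\pdim S_i,\pdim I(S_i)\}=1$, we get $\pdim I(S_i)\ge 1$, so $I(S_i)$ is not projective. Suppose $S_i$ embedded into a projective module. Then it would embed into an indecomposable projective $P'$ with $\soc P'\cong S_i$, hence $I(P')\cong I(S_i)$. But Nakayama algebras are $1$-Gorenstein (dominant dimension at least one), so the injective envelope of $P'$ is projective, a contradiction. Hence $\del(S_i)\ge 1$, and with the upper bound $\del(S_i)=1$. This is how the paper handles this case.

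\textbf{The rest of your proposal is sound.}
\begin{itemize}
\item Your case $e(S_i)\ge 2$ is exactly the paper's argument. You take $N=\ker(d_{e(S_i)-1})$, the kernel of the projection $I(R_{e(S_i)-1})\to M$. This inclusion is an injective envelope by uniseriality, so $N$ has injective dimension $e(S_i)$ with last term $I(i)$. Lemmas \ref{Bensonlemma} and \ref{lem:dell_lowerbound} then give $\del(S_i)\ge e(S_i)$.
\item For the upper bound, the paper simply cites \cite[3.9]{R}; you prove it directly, which is a legitimate alternative. The step you flagged as delicate is routine.
\begin{itemize}
\item Without intervals: apply the horseshoe lemma to $0\to S\to I(S)\to I(S)/S\to 0$ with $t=\pdim I(S)$. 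This gives $0\to\Omega^t(S)\to Q\to\Omega^t(I(S)/S)\to 0$ with $Q$ projective. So $\Omega^t(S)$ is a $(t+1)$-th syzygy module, for any finite dimensional algebra.
\item Your interval bookkeeping is also correct and needs no shift. With $c=g(i)+1$ one has $\Omega^{t+1}([c,i-1])=[f^{t/2}(i),f^{t/2+1}(c)-1]$. Using $f^{t/2+1}(c)=f^{t/2}(i+1)$, this is exactly $\Omega^t(S)$.
\end{itemize}
\end{itemize}
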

\begin{proof}
We will use that by \cite[3.9]{R} we always have $\del(S)\le e(S)$ and $\del(S)=0$ if and only if $S$ is a submodule of a projective module, which follows directly by the definition of the delooping level.
We will first treat the cases $e(S)=0$ and $e(S)=1$ extra.
Assume first that $e(S)=0$, which by Lemma \ref{lem::e(S)odd_or_even} means that $0=e(S)=\pdim I(S)$. Thus the injective envelope $I(S)$ of $S$ is projective and thus $S$ is a submodule of a projective module and therefore $\del(S)=0$.
Now assume $e(S)=1$, which by Lemma \ref{lem::e(S)odd_or_even} means that $1=e(S)=\pdim S \leq \pdim I(S)$. Thus $I(S)$ is not projective. Now assume $\del(S)=0$ so that $S$ would embed into a projective module $P$. Since $S$ is simple, it also embeds into an indecomposable projective module $P'$, which must have the same socle as $S$, namely $S$ itself, since $A$ is a Nakayama algebra. 
But then $P'$ and $S$ have the same injective envelope $I(S)$, which is not projective. This is a contradiction, since Nakayama algebras are 1-Gorenstein and thus the injective envelope of every indecomposable projective module is projective. Thus we must have $\del(S) \geq 1$ and the inequality $\del(S) \leq e(S)=1$ forces $\del(S)=1$. Now assume $e(S) \geq 2$ in the following.

We would like to construct an $A$-module $N$ such that $\idim N =e(S)$ and $I(S)$ is the last term in a minimal injective coresolution of $N.$ This would ensure that $\Ext^{e(S)}(S,N)\neq 0$ by Lemma \ref{Bensonlemma}. Then we can apply Lemma \ref{lem:dell_lowerbound} to obtain $\del(S)\ge e(S).$ We have  $\del(S)\le e(S)$ for every simple module $S$, so this would conclude the proof for $\del(S)=e(S)$.

Recall that in \Cref{lem::existanceOfM} we constructed a module $M$  such that $\idim M=e(S)-1$ and $I(S)$ is the last term in a minimal injective coresolution of $M.$ Thus, if we choose $N$ so that the first cosyzygy module of $N$ is $M,$ i.e. $\Omega^{-1}(N)\cong M$, then $N$ has the desired properties. 

Since this $M$ was constructed as a factor module of an indecomposable injective module $I(j)$, we can choose $N$ to be the kernel of the projection $p: I(j)\twoheadrightarrow  M$. As both $I(j)$ and $M$ are uniserial, $p$ is uniquely determined up to isomorphism.  Uniseriality of $I(j)$ also implies that the canonical inclusion $N=\ker (p) \hookrightarrow I(j)$ is a minimal injective envelope of $N$, thus $N\cong \Omega^{-1}(M)$, which concludes our proof.

\end{proof}

The next example shows that the result is not true for general indecomposable modules $M$ instead of simple modules when defining $e(M):=\min \{ \pdim M, \pdim I(M) \}$.

\begin{example}
Let $A$ be the Nakayama algebra with Kupisch series [2,3,3,2,2,1] and simple modules numbered from 1 to 6. Let $M=e_2A/e_2 J^2$, which is a two-dimensional $A$-module. Then $M$ has delooping level 2, but $e(M)=3.$
\end{example}

\newpage

\section{Auslander regularity and the Coxeter matrix}
\label{sec::auslander_regularity_and_Coxeter_matrix}

We give the following definition:
\begin{definition}
\label{def::weel-def_ARmap}
Let $A$ be an algebra of finite global dimension.
We say that $A$ has a \textcolor{blue}{\emph{well-defined inverse Auslander-Reiten map}}  if the last non-zero term in a minimal injective coresolution of every indecomposable projective module is indecomposable.
\end{definition}
Let $A$ be an algebra with a well-defined inverse Auslander-Reiten map and assume that its underlying quiver has vertices $\{1,\ldots,n\}.$ Define the inverse Auslander-Reiten map of $A$ $\textcolor{blue}{\sigma}: \{1,\ldots,n\}\rightarrow \{1,\ldots,n\}$ to be the map  $\sigma(x)=y$ if $I(y)\cong \Omega^{-\idim P(x)} P(x).$ Any linear Nakayama algebra has a well-defined inverse Auslander-Reiten map as every indecomposable module has finite minimal injective coresolution, and all the cosyzygies in the resolution are indecomposable. 
Also every Auslander regular algebra has a well-defined inverse Auslander-Reiten map
 and in this case $\sigma=\hat{\psi}^{-1}$, the inverse of the Auslander-Reiten permutation, see Section \ref{subsec::permutations_on_simples}.
We briefly recall some basics on the Euler characteristic for finite dimensional algebras of finite global dimension $A$, see \cite[Chapter 3 III.]{ASS}.
The \textcolor{blue}{\emph{Euler characteristic}} of $A$ with $n$ simple modules and Cartan matrix $\omega_A$ is defined as 
the $\mathbb{Z}$-bilinear form 
$$\langle -, - \rangle_A: \mathbb{Z}^n \times \mathbb{Z}^n \rightarrow \mathbb{Z}$$
given by $\langle x, y \rangle_A= x^T (\omega_A^{-1})^T y$.
A quiver is acyclic if it does not contain oriented cycles.  
Note that an acyclic quiver algebra $KQ/I$ has finite global dimension, thus, there exists a Coxeter matrix associated to it.
We say that the vertices (equivalently, the simples) of an acyclic quiver algebra $KQ/I$ are \textcolor{blue}{\textit{naturally labelled}} or \textcolor{blue}{\textit{naturally ordered}} if the vertices are totally ordered such that $i<j \in Q_0$ whenever there is an oriented path from $i$ to $j$ and $i\neq j.$ We remark that the notion of being naturally labelled is used in combinatorics, see for example \cite{BCK}, and we extend this notion here from Hasse quivers of finite posets to general acyclic quivers.
If $Q$ is acyclic, such an ordering always exists.
This generalises the canonical labelling (\ref{eq:canonical_labelling}) on linear Nakayama algebras.

\begin{proposition}
\label{prop::Euler_characteristic_formula}
Let $A$ be a finite dimensional algebra of finite global dimension $g$.
If $M$ and $N$ are $A$-modules with dimension vectors $x$ and $y$, then we have 
that $\langle x, y \rangle_A= \sum\limits_{j=0}^{g}{(-1)^j \dim \Ext_A^j(M,N)}$
\end{proposition}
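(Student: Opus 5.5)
The plan is to reduce to simple modules and then extend by bilinearity and additivity along short exact sequences. Both sides are additive in each variable. The left side $\langle x,y\rangle_A = x^T(\omega_A^{-1})^T y$ is $\mathbb{Z}$-bilinear. For the right side, $\chi(M,N):=\sum_{j=0}^{g}(-1)^j\dim\Ext^j_A(M,N)$, a short exact sequence $0\to M'\to M\to M''\to 0$ gives a long exact Ext sequence. This sequence is finite because $\Ext^j$ vanishes for $j>g$, so the alternating sum of dimensions is zero and $\chi(M,N)=\chi(M',N)+\chi(M'',N)$. The same holds in the second variable. Since every module has a finite composition series, induction on length shows that both sides depend only on dimension vectors. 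It therefore suffices to check the formula on a generating set.

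The key choice is to reduce the first argument to projectives rather than simples. A finite projective resolution $0\to P_g\to\cdots\to P_0\to M\to 0$ exists because $\gldim A=g$. Splitting it into short exact sequences and using additivity gives $\chi(M,N)=\sum_k(-1)^k\chi(P_k,N)$. Likewise $\underline{\dim} M=\sum_k(-1)^k\underline{\dim}P_k$, so the left side decomposes the same way by bilinearity. It then remains to treat $M=P(i)$. There $\Ext^j(P(i),N)=0$ for $j>0$ and $\dim\Hom_A(P(i),N)=\dim Ne_i=y_i$. So we must verify $(\underline{\dim}P(i))^T(\omega_A^{-1})^T y = y_i$.

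This is a direct matrix computation. The column vectors $\underline{\dim}P(i)$ are the columns of $\omega_A$, so $\underline{\dim}P(i)=\omega_A e_i$, where $e_i$ is the standard basis vector. Then $(\omega_A e_i)^T(\omega_A^{-1})^T y = e_i^T\omega_A^T(\omega_A^T)^{-1}y=e_i^Ty=y_i$, as required. Invertibility of $\omega_A$ over $\mathbb{Z}$, recalled in the introduction, guarantees that the form is integral.

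The main obstacle is only bookkeeping. One must make sure the row/column convention for $\omega_A$ matches the transpose in the definition of the form, namely that the $i$-th column of $\omega_A$ is $\underline{\dim}P(i)$. One must also check that $\dim\Hom(P(i),N)=\dim Ne_i$ counts the $i$-th entry of the dimension vector of the right module $N$. Both facts are stated earlier in the paper, citing \cite[Chapter III, Proposition 3.8]{ASS}. Additivity in the second variable is in fact not needed once the first variable is reduced to projectives, since the projective case was verified for arbitrary $N$.
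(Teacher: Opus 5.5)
Your proof is correct and matches the standard argument: the paper does not prove this itself but cites \cite[Chapter III, Proposition 3.13]{ASS}, where the formula is checked on indecomposable projectives via $\dim\Hom_A(P(i),N)=\dim Ne_i$ and then extended along a finite projective resolution using long exact $\Ext$-sequences, as you do. Your opening framing about reducing to simple modules and a generating set is superfluous, since the resolution argument already handles arbitrary $M$ directly.
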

For a proof, see for example \cite[Chapter 3, Proposition 3.13]{ASS}.

For the purpose of the next theorem, we also need the following definitions:
\begin{definition}
Let $A$ be a naturally labelled acyclic algebra.
An $A$-module $M$ \textcolor{blue}{\textit{satisfies property $\circledast$}} if the following property is satisfied:
Let $0 \rightarrow M \rightarrow I^0 \rightarrow \cdots \rightarrow I^n \rightarrow 0 $ be a minimal injective coresolution of $M$, then $I^n=I(x)$ is an indecomposable module and $x <y$ for all $y$ such that $I(y)$ is a direct summand of $I^i$ for some $0 \leq i <n$.
We say that an algebra $A$ \textcolor{blue}{\textit{satisfies property $\circledast$}} if every indecomposable projective $A$-module satisfies property $\circledast$.
\end{definition}

We give an example:
\begin{example}
Let $A=KQ/I$ be the algebra with quiver $Q=$
\[\begin{tikzcd}
	4 && 5 \\
	& 3 \\
	1 && 2
	\arrow["b", from=2-2, to=1-1]
	\arrow["d", from=2-2, to=1-3]
	\arrow["a", from=3-1, to=2-2]
	\arrow["c", from=3-3, to=2-2]
\end{tikzcd}\]
and relations $I=\langle ab, cd \rangle.$
This algebra is acyclic and its vertices are naturally labelled.
We have the following minimal injective coresolutions of the indecomposable projective $A$-modules:
$$0 \rightarrow P(1) \rightarrow I(5) \rightarrow 0$$
$$0 \rightarrow P(2) \rightarrow I(4) \rightarrow 0$$
$$0 \rightarrow P(3) \rightarrow I(4) \oplus I(5) \rightarrow I(3) \rightarrow 0$$
$$0 \rightarrow P(4) \rightarrow I(4) \rightarrow I(3) \rightarrow I(1) \rightarrow 0$$
$$0 \rightarrow P(5) \rightarrow I(5) \rightarrow I(3) \rightarrow I(2) \rightarrow 0$$
We have $\pdim I(4)=\pdim I(5)=0$, $\pdim I(1)= \pdim I(2)=2$ and $\pdim I(3)=1$.
Thus $A$ is Auslander regular, satisfies property $\circledast$ and the inverse Auslander-Reiten map is a bijection with $\sigma(1)=5, \sigma(2)=4, \sigma(3)=3, \sigma(4)=1$ and $\sigma(5)=2$.
\end{example}

\begin{theorem}
\label{thm::C=PU_iff_ARbij}
    Let $A$ be an acyclic naturally labelled algebra with a well-defined inverse Auslander-Reiten map that satisfies property $\circledast$. Then the following are equivalent:
    \begin{enumerate}[\rm (i)]
        \item The inverse Auslander-Reiten map $\sigma$ is a bijection.
        \item There exists a Bruhat decomposition of the Coxeter matrix  $C_A=U_1PU_2$ where $U_1=Id.$
    \end{enumerate}
In this case, the Coxeter permutation coincides with the inverse of $\sigma$.
    
\end{theorem}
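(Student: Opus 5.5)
The plan is to read the rows of the Coxeter matrix as the alternating sums over the minimal injective coresolutions of the indecomposable projectives. Once that is done, both conditions of the theorem become statements about the positions of the leftmost non-zero entries of the rows of $C_A$.

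\textbf{Step 1: rows of $C_A$ encode coresolutions.} Fix a vertex $x$ and the minimal injective coresolution $0\to P(x)\to I^0\to\cdots\to I^{d}\to 0$. It is finite because $A$ is acyclic and hence of finite global dimension. Exactness gives $\underline{\dim}\,P(x)=\sum_k(-1)^k\underline{\dim}\,I^k$. The $x$-th column of $\omega_A$ is $\underline{\dim}\,P(x)$ and the $y$-th row is $\underline{\dim}\,I(y)$, so this reads $\omega_A e_x=\omega_A^T m_x$. Here $m_x\in\mathbb{Z}^n$ has $y$-th entry
\[
(m_x)_y=\sum_k(-1)^k\,[I^k:I(y)],
\]
the signed multiplicity of $I(y)$ in the coresolution. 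Let $M$ be the matrix with columns $m_x$. Then $M=(\omega_A^T)^{-1}\omega_A$, so $M^T=\omega_A^T\omega_A^{-1}=-C_A$. Consequently the $x$-th row of $C_A$ is $-m_x^T$. I will double-check the transposes here carefully, since the whole argument rests on this identification.

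\textbf{Step 2: leading entries.} By the well-definedness of the inverse Auslander--Reiten map, $I^{d}\cong I(\sigma(x))$. By property $\circledast$, every other $I(y)$ occurring in some $I^k$ with $k<d$ has $y>\sigma(x)$. Hence $(m_x)_y=0$ for $y<\sigma(x)$, and $(m_x)_{\sigma(x)}=(-1)^{d}\neq 0$. So the leftmost non-zero entry of row $x$ of $C_A$ sits in column $\sigma(x)$.

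\textbf{Step 3: the linear algebra.} The key observation is that $C_A=PU_2$ with $U_2$ upper triangular and invertible holds for some permutation matrix $P$ if and only if the leading columns of the rows of $C_A$ are pairwise distinct.

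\emph{If $C_A=PU_2$.} The rows of $C_A$ are the rows of $U_2$ in permuted order. Row $k$ of an invertible upper triangular matrix has its leading entry exactly at column $k$, so the leading columns are distinct.

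\emph{If the leading columns are distinct.} They then form a permutation $\tau$. Reordering the rows so that row $x$ goes to position $\tau(x)$ produces a matrix whose row $k$ starts at column $k$ with a non-zero entry, i.e.\ an invertible upper triangular $U_2$. Writing $C_A=PU_2$ for the corresponding permutation matrix $P$ gives a Bruhat decomposition with $U_1=\id$.

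Combining this with Step 2, (ii) holds if and only if $\sigma$ is injective. Since $\sigma$ is a self-map of the finite set $\{1,\ldots,n\}$, this is equivalent to $\sigma$ being bijective, which is (i).

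\textbf{Step 4: the permutation.} In the decomposition $C_A=PU_2$, row $x$ of $C_A$ equals row $p_r(x)$ of $U_2$, whose leading column is $p_r(x)$. Step 2 then gives $p_r=\sigma$. By uniqueness of the permutation matrix in a Bruhat decomposition, the Coxeter permutation is $p_A=p_c=p_r^{-1}=\sigma^{-1}$.

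The only delicate point is Step 1, namely getting the sign and transpose conventions right so that rows (not columns) of $C_A$ correspond to the coresolutions of projectives. Everything after that is elementary row-echelon reasoning.
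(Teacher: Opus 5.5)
Your proof is correct and has the same structure as the paper's. Both identify row $x$ of $C_A$ with the minimal injective coresolution of $P(x)$, use property $\circledast$ to put the leftmost non-zero entry of that row in column $\sigma(x)$, and note that (ii) holds exactly when these leading columns are pairwise distinct. The only difference is in Step~1: the paper writes $C_{ij}=-\langle \underline{\dim}(S_j),\underline{\dim}(P(i))\rangle_A$ and evaluates it via the Euler characteristic formula and Lemma~\ref{Bensonlemma}, whereas you get the entries $-(m_x)_y$ directly from additivity of dimension vectors along the coresolution, which is slightly more elementary.
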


\begin{proof} 
Note that (ii) can also be expressed as the matrix $C_A$ being upper triangular up to reordering its rows. 
Let $C_A=(C_{ij})_{i,j}$ and define $$p(i)=\min \{j\ | \ C_{ij}\neq 0 \}$$ for a vertex $i\in \{1,2,\ldots,n\}.$ 
Then $C_A$  is upper triangular up to reordering the rows if and only if $p$ is a bijection. 
Note that for the "only if" part, we use that $C_A$ is invertible. 

    
    
\begin{claim*}
        We can describe the entries of $C_A$ as $$C_{ij}=-\langle \underline{\dim}(S_j),\underline{\dim}(P(i))\rangle_A.$$
    \end{claim*}
\begin{proof}
We know that $C_A=- \omega_A^T \omega_A^{-1}$.
We have $C_{ij}=e_iT C_A e_j = - e_i^T\ \omega_A^T \omega_A^{-1} e_j$. 
Now we can apply the transpose to this to get $C_{ij}$ since this is a scalar and the transpose is the same number:
$C_{ij}=- (e_i^T \ \omega_A^T \omega_A^{-1} e_j)^T=-(e_j^T) (\omega_A^{-1})^T (\omega_A e_i).$ 
Recall that the $i$-th column of the Cartan matrix of $A$ equals the dimension vector of $P(i).$ 
Thus, by definition of the Euler characteristic, we have $C_{ij}=- \langle \underline{\dim} (S_j) , \underline{\dim}  (P(i))  \rangle_A$.
\end{proof}
    
    
    \begin{claim*}
        The maps $p$ and $\sigma$ coincide.
    \end{claim*}
    \begin{proof}
        We fix an $i\in \{1,\dots,n\}$ and consider a minimal injective coresolution of $P(i)$:
        \[0\rightarrow P(i) \rightarrow I^0\rightarrow I^1 \rightarrow \ldots \rightarrow I^d\rightarrow 0. \]
       
        Since $I^d\cong I(\sigma(i))$ by the definition of $\sigma,$ $C_{i\sigma(i)}=-\langle {\underline{\dim}}(S_{\sigma(i)}),{\underline{\dim}}(P(i))\rangle_A = \pm \dim \Ext_A^d(S_{\sigma(i)},P(i)) \neq 0$. This follows from Proposition \ref{prop::Euler_characteristic_formula} and the fact that $I(\sigma(i))$ appears only in $I^d$ as a direct summand since $A$ has property $\circledast$. Thus, $p(i)\le \sigma(i).$
        Property $\circledast$ of the indecomposable projectives also implies $S_j\ncong \soc(I^r)$ for any $j<\sigma(i)$ and any $0\le r \le d.$ 
 
        Here we used that $S_{\sigma(i)}\cong \soc(I^d)$.
 This shows that $C_{ij}=-\langle {\underline{\dim}}(S_j),{\underline{\dim}}(P(i))\rangle_A = 0$ for every $j<\sigma(i),$ which proves that $p(i)=\sigma(i).$
    \end{proof}

Thus, $\sigma$ is a bijection if and only if the Coxeter matrix is upper triangular up to reordering its rows by a permutation. 
For the last statement of the theorem, note that if $p$ is bijective then it coincides with the inverse of the Coxeter permutation.
\end{proof}

We have the following corollary, that we state as a theorem due to its importance:
\begin{theorem} \label{maintheoremauslanderregimpliescoxbruhat}
Let $A$ be a naturally labelled acyclic Auslander regular algebra.
Then there exists a Bruhat decomposition of the Coxeter matrix $C_A=U_1 P U_2$ with $U_1=\id$.
In this case, the Coxeter permutation coincides with the Auslander-Reiten permutation.
\end{theorem}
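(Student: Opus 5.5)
The plan is to derive this as a corollary of Theorem \ref{thm::C=PU_iff_ARbij}. Three things must be checked: $A$ has a well-defined inverse Auslander-Reiten map, $\sigma$ is a bijection, and $A$ satisfies property $\circledast$.

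\textbf{Well-definedness and bijectivity.} Since $A$ is Auslander regular, it has finite global dimension, as required in Definition \ref{def::weel-def_ARmap}. By the Auslander-Reiten bijection of \cite[Proposition 5.4]{AR}, recalled in Section \ref{subsec::permutations_on_simples}, $\psi^{-1}$ sends an indecomposable projective $P$ to $\Omega^{-\idim P}(P)$. This last cosyzygy is the last term of a minimal injective coresolution, and it is indecomposable injective. Hence the inverse Auslander-Reiten map is well defined, $\sigma=\hat\psi^{-1}$, and $\sigma$ is a bijection. So condition (i) of Theorem \ref{thm::C=PU_iff_ARbij} holds, provided property $\circledast$ is verified.

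\textbf{Property $\circledast$.} Fix $i$, let $0\to P(i)\to I^0\to\cdots\to I^d\to 0$ be a minimal injective coresolution with $I^d=I(x)$, and let $I(y)$ be a summand of $I^r$ for some $r<d$. We must show $x<y$.

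1. By Lemma \ref{Bensonlemma}, $\Ext^r(S_y,P(i))\neq 0$ and $\Ext^d(S_x,P(i))\ne0$. In particular $\grade S_x\le d$.
2. By Theorem \ref{AGcondition}, $\grade S_x=\pdim I(x)$. By \cite{AR}, $\pdim I(x)=\idim \psi(I(x))=\idim P(i)=d$, so $\grade S_x=d$.
3. By the Auslander condition, $\Ext^{r}(S_y,A)\ne 0$ forces $\grade S_y\le r<d$. Hence $\pdim I(y)=\grade S_y<d=\pdim I(x)$.

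Comparing projective dimensions of injectives does not directly give $x<y$, however. I would use the ordering together with Auslander-Gorenstein theory as follows.

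4. The first cosyzygy map gives $\Omega^{-(d-1)}P(i)\to I^{d-1}\to I(x)\to 0$ surjective. Iterating back through the minimal resolution, there should be a nonzero chain of maps from $I(y)$ (in degree $r$) toward $I(x)$.
5. A nonzero map $I(y)\to I(x)$, or more generally such a chain of nonzero maps through indecomposable injectives, yields a path from $x$ to $y$ in the quiver. Indeed, $\soc I(x)=S_x$ is a composition factor of $I(y)=D(Ae_y)$, which means a path from $x$ to $y$ exists.
6. Natural labelling then gives $x\le y$, and $x\neq y$ because $\pdim I(y)<\pdim I(x)$. Hence $x<y$.

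Making step 4 rigorous when $r<d-1$ is the main obstacle. The connecting maps in a minimal coresolution need not compose to something nonzero. Two routes are available:

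- Induct on $d-r$, using that each cosyzygy $\Omega^{-k}P(i)$ is a submodule of $I^k$ whose socle summands map nontrivially forward.
- Avoid chains altogether: the socle of the image of $I^{r}$ in $I^{r+1}$ relates to summands of $I^{r+1}$, so by induction every $y$ appearing in $I^r$ is reachable back from $x$ via paths $x\to\cdots\to y$. A sanity check is that acyclicity gives that socles of later terms lie "below" (earlier vertices), consistent with Corollary \ref{cor:decreasing_labels_injective_resol} in the monomial case.

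Once property $\circledast$ holds, Theorem \ref{thm::C=PU_iff_ARbij} yields (ii), and it also shows the Coxeter permutation equals $\sigma^{-1}=\hat\psi$, the Auslander-Reiten permutation.
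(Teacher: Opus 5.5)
Your reduction to Theorem \ref{thm::C=PU_iff_ARbij} is the same as the paper's. Your treatment of well-definedness, of the bijectivity of $\sigma$, and of $x\neq y$ (via $\pdim I(y)\le r<d=\pdim I(x)$) is correct. However, step 4 carries the whole content of property $\circledast$, and it is left unproved; neither of your two routes closes it.

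Route 1 is false if read literally. We have $\Omega^{-k}P(i)=\ker(f^k\colon I^k\to I^{k+1})$ and $\soc I^k=\soc \Omega^{-k}P(i)$, so the socle of $I^k$ is always killed by $f^k$. Read instead as a statement about the summands of $I^k$, it merely restates what must be shown.

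Route 2 gives the wrong quantifier. From $\soc I^{k+1}\subseteq f^k(I^k)$ you only get that every summand $I(z)$ of $I^{k+1}$ satisfies $z\le a$ for \emph{some} summand $I(a)$ of $I^k$. Iterating from $I^d=I(x)$ therefore shows only that some summand of $I^r$ has label $\ge x$, whereas $\circledast$ needs this for \emph{every} summand $I(y)$ of $I^r$.

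What is missing is the forward statement: for $k<d$, the differential $f^k$ is nonzero on every indecomposable summand of $I^k$. This fails for minimal injective coresolutions in general, since a cosyzygy may have an injective direct summand, which is then mapped to zero. So Auslander regularity must be used again at this point; the grade computation in step 3 only separates $x$ from $y$ and says nothing about their order.

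The paper takes the needed input from \cite[Proposition 5.4]{AR}: over an Auslander regular algebra every nonzero cosyzygy $\Omega^{-k}P(i)$ is indecomposable. Suppose a summand $X$ of $I^k$, with $k<d$, satisfied $f^k(X)=0$. Then $X\subseteq \ker f^k=\Omega^{-k}P(i)$ would be an injective submodule, hence a direct summand, of this indecomposable module. So $\Omega^{-k}P(i)=X$ would be injective, which by minimality forces $I^k=\Omega^{-k}P(i)$ and $I^{k+1}=0$, contradicting $k<d$.

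Hence every summand $I(a)$ of $I^k$ has a nonzero component $I(a)\to I(b)$ into some summand $I(b)$ of $I^{k+1}$, and $b\le a$ by your step 5. Chaining from $I(y)$ in degree $r$ down to $I^d=I(x)$ gives $x\le y$, and together with $x\neq y$ this gives $x<y$.

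Your worry that the connecting maps need not compose to something nonzero is beside the point. Each link only needs to be nonzero individually, since each yields an inequality of labels, and these combine by transitivity.
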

\begin{proof}
Let 
$$0 \rightarrow P \rightarrow I^0 \xrightarrow{f^0} I^1 \xrightarrow{f^1} \cdots \xrightarrow{f^{n-2}} I^{n-1} \xrightarrow{f^{n-1}} I^n \rightarrow 0$$
be a minimal injective coresolution of the indecomposable projective module $P$.
First note that for an Auslander regular algebra, the last term $I^n$ is always indecomposable, thus $A$ has a well-defined Auslander map $\sigma$. Moreover, 
$I^n$ has projective dimension equal to $n$ and $\sigma$ is bijective. All these statements follow from \cite[Proposition 5.4]{AR}.
Therefore, this theorem is a direct consequence of Theorem \ref{thm::C=PU_iff_ARbij} once we have shown that naturally labelled acyclic Auslander regular algebras satisfy property $\circledast$.
We claim that the maps $f^i: I^i \rightarrow I^{i+1}$ have the property that $f^i(X) \neq 0$ for every indecomposable direct summand $X$ of $I^i$.
By \cite[Proposition 5.4]{AR}, every non-zero cosyzygy $\Omega^{-i}(P)$ of an indecomposable projective $A$-module $P$ is indecomposable in an Auslander regular algebra $A$.
Now assume $X$ is an indecomposable direct summand of $I^i$ with $f(X)=0$ for some $i<n$.
Note that $X$ is also injective as a direct summand of the injective module $I^i$.
We have the following diagram:
\[\begin{tikzcd}
	0 & {\ker f^i} & {I^i} & {I^{i+1}} \\
	&& X \\
	&& 0
	\arrow[from=1-1, to=1-2]
	\arrow["{i_1}", from=1-2, to=1-3]
	\arrow["{f^i}", from=1-3, to=1-4]
	\arrow["{\exists g}", from=2-3, to=1-2]
	\arrow["{i_2}"', from=2-3, to=1-3]
	\arrow[from=3-3, to=2-3]
\end{tikzcd}\]
Here $i_2 : X \rightarrow I^i$ is the canonical inclusion of $X$ as a direct summand into $I^i$ and $i_1$ is the inclusion of the kernel of $f^i$ into $I^i$. By the universal property of the kernel, there exists a unique map $g: X \rightarrow \ker f^i$ with $i_1 g=i_2$ and $g$ is necessarily injective. But $\ker f^i$ is indecomposable as a cosyzygy of an indecomposable projective module and $X$ is injective, thus, $g$ must split and $X$ must be isomorphic to $\ker f^i$. Thus, $i_1$ needs to split, which is impossible since the coresolution is assumed to be minimal.
This forces $f^i(X) \neq 0$, as claimed.
So if $I(a)$ is an indecomposable injective direct summand of $I^i$, then there exists an indecomposable injective direct summand $I(b)$ of $I^{i+1}$ with $\Hom_A(I(a),I(b)) \neq 0$, which implies $a \geq b$ in the ordering of the vertices using that $A$ has an acyclic quiver. But since $A$ is assumed to be Auslander regular, we have $\pdim I^i<n$ for all $i<n$ and since $\pdim I^n=n$, the indecomposable injective module $I^n$ can not appear as a direct summand in some earlier terms $I^i$. This shows that $A$ satisfies property $\circledast$.
For the last statement of the theorem, use the last statement of Theorem \ref{thm::C=PU_iff_ARbij} and that for Auslander regular algebras $\sigma$ is the inverse of the Auslander-Reiten permutation.
\end{proof}

Following \cite[Definition 1.1]{DJMSSTW}, we call a finite dimensional acyclic algebra \textcolor{blue}{\emph{Coxeter-independent}} if the Coxeter permutation does not depend on the natural labelling. We remark that in \cite{DJMSSTW}, this was defined for incidence algebras of posets and there the term echelon-independent instead of Coxeter-independent was used. In \cite[Conjecture 8.2]{DJMSSTW} it was conjectured that Auslander regular incidence algebras of posets are Coxeter-independent. The next theorem proves this conjecture for the much more general class of acyclic Auslander regular algebras.
\begin{theorem}
\label{cor::coxeter-indep}
Any acyclic Auslander regular algebra is Coxeter-independent and the Coxeter permutation coincides with the Auslander-Reiten permutation.
\end{theorem}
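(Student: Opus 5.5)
The plan is to reduce everything to Theorem \ref{maintheoremauslanderregimpliescoxbruhat}. That theorem already says the Coxeter permutation, computed for any fixed natural labelling, coincides with the Auslander-Reiten permutation. The Auslander-Reiten permutation is defined intrinsically: $\psi(I(i)) = \Omega^{\pdim I(i)}(I(i))$ is a map from indecomposable injectives to indecomposable projectives. It therefore does not depend on any labelling as a bijection on isomorphism classes of simple modules. The whole task is to make precise what "the Coxeter permutation does not depend on the natural labelling" means and to check that it is matched by the labelling-free description.

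Concretely, fix an acyclic Auslander regular algebra $A$ and two natural labellings. These are two bijections $\lambda,\mu$ from the set of vertices (equivalently, isoclasses of simples) to $\{1,\ldots,n\}$. Under a relabelling, the Cartan matrix changes by simultaneous conjugation with a permutation matrix $Q$: $\omega' = Q\omega Q^T$. Hence the Coxeter matrix changes as $C' = Q C Q^T$.

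For each labelling, Theorem \ref{maintheoremauslanderregimpliescoxbruhat} gives a Bruhat decomposition $C = PU_2$ with $U_1=\id$. Its permutation matrix $P$ is, by the last assertion of that theorem, the matrix of $\hat\psi$ written in the chosen labelling. Transporting back to vertices, the Coxeter permutation for labelling $\lambda$ is $\lambda^{-1}\circ p_A^{\lambda}\circ\lambda$. This equals the labelling-free Auslander-Reiten bijection on simples $S_x\mapsto \top\psi(I(x))$, and the same holds for $\mu$. Therefore the two Coxeter permutations agree as permutations of the vertex set, which is exactly Coxeter-independence in the sense of \cite[Definition 1.1]{DJMSSTW}. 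The second assertion of the theorem is then just the last sentence of Theorem \ref{maintheoremauslanderregimpliescoxbruhat}.

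The one delicate point is bookkeeping. First, one must check that the notion of "Coxeter permutation not depending on the labelling" is understood as equality of the induced permutations on vertices, not on indices. Second, one must check that the identification in Theorem \ref{maintheoremauslanderregimpliescoxbruhat} (column-permutation of $P[C_A]$ versus $\hat\psi$, where $\hat\psi$ versus $\sigma=\hat\psi^{-1}$ matters) is stated compatibly in each labelling. I would verify that, in each labelling, the equality $p(i)=\sigma(i)$ from the proof of Theorem \ref{thm::C=PU_iff_ARbij} is a statement about vertices: $I(\sigma(i))$ is the last term of the coresolution of $P(i)$. This statement is visibly labelling-invariant, so the row-permutation of $P$ and hence its inverse, the Coxeter permutation, is determined intrinsically. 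No further homological input is needed beyond Theorem \ref{maintheoremauslanderregimpliescoxbruhat}.
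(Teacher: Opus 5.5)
Your proposal is correct and takes the same route as the paper, whose proof is the one line ``this is a direct consequence of Theorem \ref{maintheoremauslanderregimpliescoxbruhat}.'' Your bookkeeping supplies the detail behind that line: in every natural labelling the Coxeter permutation equals the intrinsically defined Auslander-Reiten permutation, so the induced permutations on the vertex set coincide.
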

\begin{proof}
This is a direct consequence of Theorem \ref{maintheoremauslanderregimpliescoxbruhat}.
\end{proof}

Specialising to posets, we obtain the following corollary, which answers one direction of \cite[Question 5.3]{KMT}: 
\begin{corollary}
Let $P$ be a bounded poset whose incidence algebra $KP$ is naturally labelled and Auslander regular.
Then the Coxeter matrix of $KP$ has a Bruhat factorisation $C=U_1 P U_2$ with $U_1$ the identity matrix.
\end{corollary}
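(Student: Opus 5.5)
This is a direct specialisation of Theorem \ref{maintheoremauslanderregimpliescoxbruhat}. I would first check that the incidence algebra $KP$ of a finite bounded poset $P$ fits the hypotheses of that theorem. The algebra $KP$ is the quiver algebra of the Hasse quiver of $P$ modulo the commutativity relations. That quiver is acyclic, because a directed cycle would give $x<y<\dots<x$ in $P$. Moreover, a natural labelling of $KP$ in the sense of Section \ref{sec::auslander_regularity_and_Coxeter_matrix} is exactly a linear extension of $P$, since paths in the Hasse quiver correspond to relations $x\le y$. So $KP$ is a naturally labelled acyclic quiver algebra.

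Next I would note that acyclicity gives finite global dimension, so the Coxeter matrix $C=-\omega_{KP}^T\omega_{KP}^{-1}$ is defined. With the natural labelling, the Cartan matrix $\omega_{KP}$ is lower triangular, as the paper already uses.

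By assumption $KP$ is Auslander regular, so Theorem \ref{maintheoremauslanderregimpliescoxbruhat} applies directly. It yields a Bruhat decomposition $C=U_1PU_2$ with $U_1=\id$, and in addition the Coxeter permutation equals the Auslander--Reiten permutation. Here the letter $P$ in the factorisation denotes the permutation matrix, not the poset; I would say this explicitly to avoid confusion. The boundedness of $P$ plays no role in the argument; it only matches the setting of \cite[Question 5.3]{KMT}.

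There is no real obstacle. The only point that needs a sentence of justification is the identification of natural labellings of $KP$ with linear extensions of $P$, together with the acyclicity of the Hasse quiver. All the homological work, namely property $\circledast$ and the identification of $p$ with $\sigma$, is already done in Theorems \ref{thm::C=PU_iff_ARbij} and \ref{maintheoremauslanderregimpliescoxbruhat}.
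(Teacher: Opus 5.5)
Your proposal is correct and matches the paper: the paper states this corollary directly after Theorem~\ref{maintheoremauslanderregimpliescoxbruhat} as its specialisation to incidence algebras, using only that $KP$ is an acyclic quiver algebra with the given natural labelling. Your identification of natural labellings with linear extensions is harmless but not needed, since the labelling is part of the hypothesis; depending on the arrow convention for the Hasse quiver, it may in fact be a linear extension of the dual poset.
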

Next we apply Theorem \ref{thm::C=PU_iff_ARbij} to linear Nakayama algebras and then 2-Gorenstein monomial algebras.
\begin{theorem}
\label{thm::C=PU_iff_ARbijnak}
    Let $A$ be a linear Nakayama algebra with canonical ordering of the simple modules, as in (\ref{eq:canonical_labelling}). Then the following are equivalent:
    \begin{enumerate}[\rm (i)]
        \item $A$ is Auslander regular.
        \item $\sigma$ is a bijection.
        \item There exists a Bruhat decomposition of the Coxeter matrix  $C_A=U_1PU_2$ where $U_1=Id.$
    \end{enumerate}
In this case, the Coxeter permutation coincides with the Auslander-Reiten permutation.
\end{theorem}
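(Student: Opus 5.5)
The plan is to reduce the statement to \Cref{thm::C=PU_iff_ARbij} and \Cref{thm::AGcharacterisationviaARbij_monomial}. For this we need that a linear Nakayama algebra with the canonical labelling (\ref{eq:canonical_labelling}) has a well-defined inverse Auslander-Reiten map and satisfies property $\circledast$. Well-definedness was already observed after \Cref{def::weel-def_ARmap}. Indeed, $A$ has finite global dimension, and all cosyzygies of an indecomposable module are indecomposable uniserial modules, so the last term $I^d$ of a minimal injective coresolution of $P(x)$ is indecomposable.

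For property $\circledast$, write $P(x)=[b+1,a]$ and use (\ref{eq::injResolUsingg}). The terms are
$$I(a), I(b), I(g(a)), I(g(b)), \ldots,$$
and the socle labels form the sequence $a, b, g(a), g(b), g^2(a), \ldots$. First I show this sequence is strictly decreasing up to the last nonzero term. Since $f$ and $g$ are monotone increasing and $b<a$, every nonzero cosyzygy has top strictly less than its socle on a linear quiver. Each cosyzygy $\Omega^{-k}(P(x))$ is the interval from $\top I^{k-1}$'s predecessor relation, namely $[\,\text{socle of } I^{k}+1,\ \text{previous socle}\,]$, and it is nonzero. Consecutive socles are therefore strictly decreasing. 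Hence the last term $I(x')$ has label $x'$ strictly smaller than every earlier label, which is exactly $\circledast$. The only care needed is with the intervals in the indexing of (\ref{eq::injResolUsingg}): consecutive socle entries $a>b$, $b>g(a)$ (as $[g(a)+1,b]$ is the nonzero cokernel), and so on. So \Cref{thm::C=PU_iff_ARbij} gives (ii)$\Leftrightarrow$(iii), and the Coxeter permutation equals $\sigma^{-1}$.

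For (i)$\Rightarrow$(ii), an Auslander regular algebra has $\sigma=\hat\psi^{-1}$ bijective, as noted after \Cref{def::weel-def_ARmap} (from \cite[Proposition 5.4]{AR}). For (ii)$\Rightarrow$(i), I use \Cref{rmk::inverse_ARbij}: a linear Nakayama algebra is monomial, and the map $\psi^{-1}\colon P\mapsto\Omega^{-\idim P}(P)$ is well-defined because all injective dimensions are finite and the last cosyzygy is indecomposable. It is bijective by (ii), so $A$ is Auslander-Gorenstein. Since the global dimension is finite, $A$ is Auslander regular. In this case $\sigma=\hat\psi^{-1}$, so the Coxeter permutation $\sigma^{-1}$ equals $\hat\psi$, the Auslander-Reiten permutation.

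The main obstacle is the strict decrease of socle labels in the verification of $\circledast$. One must make sure that no zero cosyzygy appears before the end, and check that the labels of $I^0$ and $I^1$ compare correctly even when $P(x)$ is already injective, which is the trivial case $d=0$.
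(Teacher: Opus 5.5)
Your proposal is correct and follows the paper's proof. You get (i)$\Leftrightarrow$(ii) from \Cref{thm::AGcharacterisationviaARbij_monomial} and \Cref{rmk::inverse_ARbij}; using \cite[Proposition 5.4]{AR} for (i)$\Rightarrow$(ii) is equally fine. You get (ii)$\Leftrightarrow$(iii) from \Cref{thm::C=PU_iff_ARbij} after checking property $\circledast$ from the shape (\ref{eq::injResolUsingg}) of the coresolutions, which you do more explicitly than the paper.

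One slip: a nonzero cosyzygy need not have top strictly less than its socle, since it may be simple. You do not need that claim. The strict decrease $s_{k+1}<s_k$ of the socle labels follows directly from $\Omega^{-k}(P(x))=[s_{k+1}+1,s_k]$ being nonzero for $k\le d$, and that is the argument you actually use.
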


\begin{proof}
As discussed after Definition \ref{def::weel-def_ARmap}, all linear Nakayama algebras have a well-defined inverse Auslander-Reiten map. 
So by Theorem \ref{thm::AGcharacterisationviaARbij_monomial} and \Cref{rmk::inverse_ARbij}, (i) and (ii) are equivalent.
To show that (ii) and (iii) are equivalent we just have to note that linear Nakayama algebras satisfy property $\circledast$ by Theorem \ref{thm::C=PU_iff_ARbij}. This is clear from the shape of minimal injective coresolutions in linear Nakayama algebras, as described in Section \ref{subsec::Nakayama_notation} of the preliminaries.
\end{proof}

We remark that the statement of Theorem \ref{thm::C=PU_iff_ARbij} does not extend to cyclic Nakayama algebras in an obvious way:
\begin{example}
\label{ex::cyclic_AG_but_not_PU}
Let $A=KQ/I$ be the cyclic Nakayama algebra with quiver $Q$ given by 
\[\begin{tikzcd}
	& 1 \\
	5 && 2 \\
	4 && 3
	\arrow["{a_1}", from=1-2, to=2-3]
	\arrow["{a_5}", from=2-1, to=1-2]
	\arrow["{a_2}", from=2-3, to=3-3]
	\arrow["{a_4}", from=3-1, to=2-1]
	\arrow["{a_3}", from=3-3, to=3-1]
\end{tikzcd}\]
and relations $I=\langle a_1 a_2, a_2 a_3, a_4 a_5  \rangle$.

Then $A$ is Auslander regular, but there is no order on the vertices of $Q$ that comes from a cyclic shift of the order $1<2<3<4<5$, such that the Coxeter matrix $C_A$ is upper triangular up to permuting its rows. 
Ringel's homological is bijection given by 
\[
\left(
\begin{array}{ccccc}
1 & 2 & 3 & 4 & 5 \\
4 & 5 & 2 & 1 & 3
\end{array}
\right)
\]

\end{example}
While the results do not carry over to the general cyclic case, we will in future work show that some of our results in this article work at least for a nice subclass of cyclic Nakayama algebras.

Next, we extend our results from linear Nakayama algebras to 2-Gorenstein acyclic monomial algebras:
\begin{theorem}
\label{thm::AG_iff_C=PU_acyclic_2Gorenstein_monomial}
Let $A$ be a 2-Gorenstein, acyclic monomial algebra with a natural labelling on its simple modules. Then the following are equivalent:
\begin{enumerate}[\rm (i)]
    \item A is Auslander regular.
    \item There exists a Bruhat decomposition of the Coxeter matrix $C_A=U_1PU_2$ where $U_1=Id.$
\end{enumerate}
In this case, the Coxeter permutation coincides with the Auslander-Reiten permutation.
\end{theorem}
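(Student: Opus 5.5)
The plan is to reduce the statement to Theorem \ref{thm::C=PU_iff_ARbij}, exactly as was done for linear Nakayama algebras in Theorem \ref{thm::C=PU_iff_ARbijnak}. Since $A$ is acyclic, it has finite global dimension. Hence Auslander regular is the same as Auslander-Gorenstein. By Theorem \ref{thm::AGcharacterisationviaARbij_monomial} together with Remark \ref{rmk::inverse_ARbij}, $A$ is Auslander-Gorenstein if and only if the map $P\mapsto \Omega^{-\idim P}(P)$ is well-defined and bijective, i.e.\ the last term of each minimal injective coresolution of $P(i)$ is indecomposable and the induced map $\sigma$ is a bijection. So, once we know that $A$ has a well-defined inverse Auslander-Reiten map and satisfies property $\circledast$, statement (i) becomes ``$\sigma$ is a bijection''. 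Theorem \ref{thm::C=PU_iff_ARbij} then gives (i)$\Leftrightarrow$(ii). It also shows that the Coxeter permutation equals $\sigma^{-1}=\hat\psi$, the Auslander-Reiten permutation.

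The first step is well-definedness of the inverse Auslander-Reiten map, using only the 2-Gorenstein hypothesis. By Lemma \ref{lem:InjResOfProj_2gorensteinmonomial}, there are two cases. If $P(i)$ is uniserial, every term $I^d$ with $0\le d\le \idim P(i)$ is indecomposable, in particular the last one. If $P(i)$ is not uniserial, the coresolution is $0\to P(i)\to I(t_1)\oplus I(t_2)\to I(i)\to 0$, whose last term $I(i)$ is indecomposable. Finiteness of $\idim P(i)$ comes from finite global dimension.

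The second step is property $\circledast$. Let $I^d=I(x)$ be the last term, and let $I(y)$ be a summand of some $I^j$ with $j<d$. Corollary \ref{cor:decreasing_labels_injective_resol} gives a path in $Q$ from $x$ to $y$. Since the labelling is natural, $x\le y$, and it remains to rule out $x=y$. I expect this to be the only real obstacle.

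\begin{itemize}
\item In the non-uniserial case, $x=i$ and the earlier summands are $I(t_1),I(t_2)$ with $t_k$ the endpoints of nontrivial paths from $i$. Hence $t_k\neq i$ by acyclicity.
\item In the uniserial case, the coresolution is $I(x_0)\to I(x_1)\to\cdots\to I(x_d)$. The proof of Corollary \ref{cor:decreasing_labels_injective_resol} shows that $\soc I^{j+1}=S_{x_{j+1}}$ is a composition factor of $I(x_j)$, so there is a path from $x_{j+1}$ to $x_j$. If some $x_j=x_{j+1}$, then $S_{x_j}$ would appear twice in $I(x_j)$, which is impossible in an acyclic quiver. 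Alternatively, minimality of the coresolution forces the nonzero non-isomorphism $I(x_j)\to I(x_{j+1})$ to be non-injective, so the labels differ.
\end{itemize}

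In both cases the sequence $x_0>x_1>\cdots>x_d$ is strictly decreasing, so $x=x_d<y$ for every earlier summand $I(y)$. This gives property $\circledast$, and the theorem follows from Theorem \ref{thm::C=PU_iff_ARbij}.
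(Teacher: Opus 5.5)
Your proposal is correct and follows the paper's proof. Both derive well-definedness of $\sigma$ from Lemma \ref{lem:InjResOfProj_2gorensteinmonomial} and property $\circledast$ from Corollary \ref{cor:decreasing_labels_injective_resol} together with the natural labelling, then conclude via Theorem \ref{thm::AGcharacterisationviaARbij_monomial}, Remark \ref{rmk::inverse_ARbij} and Theorem \ref{thm::C=PU_iff_ARbij}. Your extra argument that consecutive labels differ (using $[I(v):S_v]=1$ for acyclic quivers, and nontrivial paths from $i$ in the non-uniserial case) makes explicit the strict inequality $x>y$. The paper asserts this strictness directly from the corollary, which by itself only gives $x\ge y$.
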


\begin{proof}
    Assume that the underlying quiver of $A$ has $n$ vertices labelled $\{1,\ldots,n\}.$ We will first show that $A$ satisfies property $\circledast$.
First, $\Omega^{- \idim P(x)}P(x)$ is always indecomposable by Lemma \ref{lem:InjResOfProj_2gorensteinmonomial}.

    Second, if $$0\to P(i)\to I^0 \to I^1 \to \ldots \to I^d \to 0$$ is a minimal injective coresolution and $I(x)$ is a direct summand of $I^j$ and $I(y)$ is a direct summand of $I^k$ for some $0\le j<k\le d$ then $x>y$ by Corollary \ref{cor:decreasing_labels_injective_resol} and the definition of a natural labelling on the simple $A$-modules.
     In conclusion, the proof of Theorem \ref{thm::C=PU_iff_ARbij} also works for $2$-Gorenstein, acyclic monomial algebras. 
     
    Now the result follows from Theorem \ref{thm::AGcharacterisationviaARbij_monomial}, \Cref{rmk::inverse_ARbij} and Theorem \ref{thm::C=PU_iff_ARbij}
\end{proof}

\begin{remark}
    Note that the statement of \Cref{thm::AG_iff_C=PU_acyclic_2Gorenstein_monomial} fails for general acyclic monomial algebras. 
    For example, \cite[Example 5.2]{KMT} illustrates an acyclic monomial algebra with a natural labelling on its simple modules that is not Auslander regular but whose Coxeter matrix is upper triangular up to reordering its rows. 
    This example is not $2$-Gorenstein.
    
\end{remark}

\section{Ringel's bijection for Auslander-Gorenstein algebras}

In this section, we prove that Iyama's grade bijection coincides with Ringel's bijection for Nakayama algebras that are Auslander-Gorenstein. We should note, that the previous sections only imply this for the linear case.
We will use the following simplified version of a proposition by Madsen \cite[Proposition 2.2]{Mad}:
\begin{proposition} \label{Madsenlemma}
Let $A$ be a Nakayama algebra and $M$ and $N$ indecomposable $A$-modules such that $M$ is a submodule of $N$.
If $N$ has odd projective dimension, then $M$ has odd projective dimension as well, and $\pdim M \leq \pdim N$.
\end{proposition}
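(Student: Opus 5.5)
We argue with the interval calculus of Section~\ref{subsec::Nakayama_notation}, working in $\mathbb{Z}$ so that the cyclic case needs no separate treatment. Since $M$ is a submodule of the uniserial module $N$, the two have the same socle. So we may write $N=[c,d-1]$ and $M=[c',d-1]$ with $c\le c'\le d-1<d$. By (\ref{eq::projResolUsingf}) and the formula for $\pdim$, the hypothesis says: there is an $i\ge 0$ with $f^{i+1}(c)=f^{i+1}(d)$, and no smaller index realises either defining equality for $N$. In other words, $f^{k+1}(c)\neq f^{k+1}(d)$ for $k<i$, and $f^{k+1}(c)\neq f^{k}(d)$ for $2k<2i+1$.

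\textbf{Step 1: the bound $\pdim M\le \pdim N$.} The map $f$ is monotone increasing, so $c\le c'\le d$ gives
\[
f^{k}(c)\le f^{k}(c')\le f^{k}(d)\qquad\text{for all } k\ge 0 .
\]
Taking $k=i+1$, the outer terms are equal, hence $f^{i+1}(c')=f^{i+1}(d)$. This yields $\pdim M\le 2i+1=\pdim N$; in particular $\pdim M$ is finite.

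\textbf{Step 2: $\pdim M$ is odd.} Suppose instead that $\pdim M=2k$ is even. Then $2k<2i+1$ by Step 1, and $f^{k+1}(c')=f^{k}(d)$. We will show this forces $f^{k+1}(c)=f^{k}(d)$, so that $\pdim N\le 2k<2i+1$, a contradiction.

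The one extra ingredient is the interlacing inequality $d\le f(c)$. It holds because $N=[c,d-1]$ is a quotient of its projective cover $P(c)=[c,f(c)-1]$, so $d-1\le f(c)-1$. Applying the monotone map $f^{k}$ gives $f^{k}(d)\le f^{k+1}(c)$. Combining this with the displayed monotonicity at index $k+1$, we get
\[
f^{k}(d)\le f^{k+1}(c)\le f^{k+1}(c')=f^{k}(d).
\]
Hence all of these are equal, and in particular $f^{k+1}(c)=f^{k}(d)$, which is the desired contradiction.

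The main obstacle I expect is bookkeeping rather than substance. One has to check that the $\inf$-formula for $\pdim$ is applied with the correct convention for which equalities count as ``even'' and which as ``odd''. One also has to handle the degenerate cases. If $c=c'$ the claim is trivial. If $N$ is projective we would have $\pdim N=0$, which is even and so excluded by hypothesis. Finally, in the cyclic case all comparisons must be made with fixed integer lifts $c\le c'\le d$, exactly as in the paper's convention, and not modulo $n$.
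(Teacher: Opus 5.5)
Your argument is correct. It is also genuinely different from what the paper does: the paper gives no proof at all and simply quotes the statement as a simplified version of Madsen's \cite[Proposition 2.2]{Mad}.

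You derive the statement directly from the interval calculus of Section~\ref{subsec::Nakayama_notation}, using three facts:
\begin{enumerate}[(i)]
\item the monotonicity of $f$;
\item the $\pdim$ formula read off from (\ref{eq::projResolUsingf}), with the even/odd conventions applied correctly;
\item the interlacing $d\le f(c)$, which holds because $N$ is a quotient of $P(c)$.
\end{enumerate}

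Your two sandwich inequalities have a clean module-theoretic reading. The even syzygy $\Omega^{2k}M=[f^k(c'),f^k(d)-1]$ is (isomorphic to) a submodule of $\Omega^{2k}N=[f^k(c),f^k(d)-1]$. The odd syzygy $\Omega^{2k+1}N=[f^k(d),f^{k+1}(c)-1]$ is a quotient of $\Omega^{2k+1}M=[f^k(d),f^{k+1}(c')-1]$. Your interlacing inequality is exactly what guarantees this second interval is a genuine, possibly empty, interval. So Step 1 says that vanishing of $\Omega^{2i+2}N$ forces vanishing of $\Omega^{2i+2}M$. Step 2 says that vanishing of an odd syzygy of $M$ forces vanishing of the same syzygy of $N$.

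The citation buys brevity and defers to Madsen's original formulation. Your route buys a self-contained proof in the paper's own notation. It treats linear and cyclic Nakayama algebras uniformly through fixed integer lifts, which is exactly the setting in which the proposition is used in Proposition~\ref{oddSimplePerfect}.
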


Recall that a module $M$ is called \textcolor{blue}{\emph{perfect}} if $\grade M=\pdim M$. Perfect modules play an important role in commutative algebra, see for example \cite{BH}, and also in non-commutative algebra, for example in the study of D-modules \cite{Bj}.
Next, we show that every simple module $S$ whose injective envelope $I(S)$ has odd projective dimension, enjoys strong homological properties:
\begin{proposition}\label{oddSimplePerfect}
Let $A$ be an Auslander-Gorenstein Nakayama algebra. Then for every simple $A$-module $S$ for which $I(S)$ has odd projective dimension, we have $\pdim I(S)=\pdim S$ and $S$ is perfect. 
\end{proposition}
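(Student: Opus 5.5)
Let $S=S_i$ with $\pdim I(S)=m$ odd; since $A$ is Auslander-Gorenstein, $m<\infty$. The plan has three steps: show $\pdim S\le m$, then obtain the grade of $S$ from Theorem \ref{AGcondition}, and finally squeeze $\grade S\le\pdim S\le\pdim I(S)=\grade S$.

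\textbf{Step 1: $\pdim S$ is odd and $\pdim S\le\pdim I(S)$.} Since $S=\soc I(S)$ is a submodule of the indecomposable module $I(S)$, and $I(S)$ has odd projective dimension, Proposition \ref{Madsenlemma} applies directly. It gives that $\pdim S$ is odd and $\pdim S\le m$. In particular $\pdim S<\infty$, so $e(S)=\pdim S$.

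\textbf{Step 2: the grade of $S$.} By Theorem \ref{AGcondition}, since $A$ is Auslander-Gorenstein, $\grade S=\pdim I(S)=m$. Thus $\Ext^k_A(S,A)=0$ for all $k<m$ and $\Ext^m_A(S,A)\neq 0$.

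\textbf{Step 3: squeeze.} For any module $M$ of finite projective dimension $p$, we have $\Ext^p_A(M,A)\neq 0$. This is standard: the last map $P_p\to P_{p-1}$ of a minimal projective resolution is a split-free monomorphism, so $\Ext^p_A(M,P_p)\neq 0$, and $P_p$ is a summand of a free module. Hence $\grade M\le\pdim M$ whenever $\pdim M<\infty$. Applying this to $S$ gives
\[
m=\grade S\le \pdim S\le \pdim I(S)=m,
\]
where the second inequality is Step 1. Hence $\pdim S=\pdim I(S)=\grade S$, so $S$ is perfect.

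\textbf{Main obstacle.} The only nontrivial input is the finiteness of $\pdim S$ together with the bound $\pdim S\le m$, and Madsen's Proposition \ref{Madsenlemma} supplies both. If the cited simplified version turned out not to cover the finiteness claim, I would instead argue from Lemma \ref{lem::e(S)odd_or_even}. The difficult case there is $e(S)$ even, where $e(S)=\pdim I(S)=m$ would be simultaneously odd and even, which is a contradiction. So $e(S)$ is odd, and Lemma \ref{lem::e(S)odd_or_even}(a) then gives $\pdim S=e(S)\le\pdim I(S)$. This recovers Step 1.
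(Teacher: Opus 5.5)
Your proposal is correct and follows the paper's proof essentially verbatim: Proposition \ref{Madsenlemma} gives $\pdim S\le\pdim I(S)$, Theorem \ref{AGcondition} gives $\grade S=\pdim I(S)$, and the squeeze uses $\grade S\le\pdim S$ for modules of finite projective dimension. The only cosmetic difference is that you prove this last inequality directly from the minimal projective resolution, whereas the paper cites the characterisation $\pdim M=\sup\{n \mid \Ext^n_A(M,A)\neq 0\}$ from \cite[Lemma VI.5.5]{ARS}.
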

\begin{proof}
    By Proposition \ref{Madsenlemma}, we have that $\pdim S \le \pdim I(S)$ as $S$ is a submodule of $I(S)$ and $\pdim I(S)$ is odd by assumption. 
    We always have $\grade(S)\le \pdim S ,$ since $\pdim S < \infty$. 
    This is immediate if we consider the definition of the grade of a module and use the following characterisation of the projective dimension of a module with finite projective dimension from \cite[Lemma VI.5.5]{ARS}:
    $\pdim M=\sup\ \{n\in\mathbb{N}_0 \ | \ \Ext^n_A(M,A)\neq 0\}$ if $\pdim M<\infty$.
    Now Theorem \ref{AGcondition} tells us that $\grade(S)=\pdim I(S)$ since $A$ is Auslander-Gorenstein.
    Thus 
    $$\pdim I(S) = \grade S \leq \pdim S \leq \pdim I(S)$$ 
    forces that we have 
    $$\pdim I(S)=\pdim S = \grade S .$$
\end{proof}

For the proof of the next theorem recall that  $e(S)=\min \{ \pdim S, \pdim I(S) \}$ and Ringel's homological bijection is defined as $h(S)=\top \Omega^{e(s)}(N(S)),$ 
where $N(S)=S$ if $e(S)$ is odd and $N(S)=I(S)$ if $e(S)$ is even. Recall that $\phi(S)=\top D \Ext_A^{g_S}(S,A)$ denotes Iyama's grade bijection with $g_S=\grade S.$
\begin{theorem} \label{RingelsPerm=ARPerm}
    Let  $A$ be an Auslander-Gorenstein Nakayama algebra. Then for every simple $A$-module $S$ we have $\grade(S)=e(S)$ and 
    $$h(S)=\phi(S).$$
\end{theorem}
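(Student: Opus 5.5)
The argument splits into two parts: first identify $e(S)$ with the grade, then compare $h(S)$ with $\phi(S)$ through the Auslander--Reiten permutation.

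\textbf{Step 1: $\grade(S)=e(S)$.}
Since $A$ is Auslander-Gorenstein, Theorem \ref{AGcondition} gives $\grade(S)=\pdim I(S)<\infty$. It remains to show $\pdim I(S)\le \pdim S$, since then $e(S)=\min\{\pdim S,\pdim I(S)\}=\pdim I(S)=\grade(S)$.
\begin{itemize}
\item If $\pdim S=\infty$, this inequality is trivial.
\item If $\pdim S<\infty$, then $\pdim S=\sup\{n\mid \Ext^n_A(S,A)\neq 0\}$ by \cite[Lemma VI.5.5]{ARS}, exactly as in the proof of Proposition \ref{oddSimplePerfect}. Since $\Ext^{\grade S}_A(S,A)\neq 0$, we get $\grade(S)\le \pdim S$.
\end{itemize}
Write $d:=e(S)=\grade(S)=\pdim I(S)$.

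\textbf{Step 2: rewrite $\phi$ via the Auslander--Reiten permutation.}
By Theorem \ref{thm::gradebij=ARbij}, Iyama's grade permutation equals the Auslander--Reiten permutation $\hat\psi$. Unwinding the definition of $\psi$, for $S=S_i$ we have
\[
\phi(S)=\top\,\Omega^{\pdim I(S)}(I(S))=\top\,\Omega^{d}(I(S)),
\]
and $\Omega^d(I(S))$ is a nonzero indecomposable projective.

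\textbf{Step 3: compare with $h(S)$ by parity of $d$.}
\begin{itemize}
\item If $d$ is even, then $N(S)=I(S)$ and $h(S)=\top\,\Omega^{d}(I(S))=\phi(S)$ directly.
\item If $d$ is odd, then $N(S)=S$, so we must show $\top\,\Omega^d(S)=\top\,\Omega^d(I(S))$.
\end{itemize}

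For the odd case I use the interval description from Section \ref{subsec::Nakayama_notation}, as in the proof of Lemma \ref{lem::existanceOfM}. Write $S_i=[i,i]$ and $I(i)=[c,i]$ with $c=\top I(i)$. By (\ref{eq::projResolUsingf}), the odd terms of both minimal projective resolutions are the same intervals: $P_{2r+1}=P(f^r(i+1))=R_{2r+1}$. This holds because both modules have socle $i$, so the right endpoint parameter $d$ in (\ref{eq::projResolUsingf}) is $i+1$ in both cases.

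The top of a nonzero syzygy $\Omega^d(X)$ is the top of its projective cover, which is the $d$-th term of the minimal projective resolution. So both tops equal $\top P(f^{(d-1)/2}(i+1))$, provided $\Omega^d(S)$ and $\Omega^d(I(S))$ are nonzero, i.e.\ $d\le\pdim S$ and $d\le \pdim I(S)$. The second holds by definition of $d$. For the first, Proposition \ref{oddSimplePerfect} applies because $\pdim I(S)=d$ is odd, and it gives $\pdim S=\pdim I(S)=d$. Hence $h(S)=\phi(S)$.

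The main obstacle is the odd case: it needs both the coincidence of the odd terms of the two resolutions and the nonvanishing of $\Omega^d(S)$. Proposition \ref{oddSimplePerfect} is exactly what supplies the latter.
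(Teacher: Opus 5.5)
Your proof is correct and follows the paper's route. You use Theorem \ref{AGcondition} together with $\grade(S)\le\pdim S$ to get $e(S)=\pdim I(S)=\grade(S)$, then Theorem \ref{thm::gradebij=ARbij} to write $\phi(S)=\top\,\Omega^{\pdim I(S)}(I(S))$, then split on parity; the only difference is that in the odd case you verify $\top\,\Omega^d(S)=\top\,\Omega^d(I(S))$ directly from the coincidence (\ref{eq::odd_terms_coincide}) of odd terms, where the paper cites Ringel's remark \cite[Section 4.4]{R}. Because of that substitution, your odd case does not actually need Proposition \ref{oddSimplePerfect}: $d=e(S)\le\pdim S$ already gives $\Omega^d(S)\neq 0$, and its projective cover is the same interval $P(f^{(d-1)/2}(i+1))$ as the last term of the resolution of $I(S)$.
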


\begin{proof}
    First, consider the case when $e(S)$ is even. Then by definition, $N(S)=I(S).$ Moreover, $e(S)=\pdim I(S),$ since by Theorem \ref{AGcondition} $\pdim I(S) =\grade(S),$ and $\grade(S)\le \pdim S$ as already discussed in the proof of Proposition \ref{oddSimplePerfect}. So in this case $\grade(S)=e(S)$ and $h(S)$ is equal to the top of the last term of the minimal projective resolution of $I(S).$ Thus, Theorem \ref{thm::gradebij=ARbij} gives us $h(S)=\phi(S)$ in this case. 

    Now assume $e(S)$ is odd. Then applying Proposition \ref{oddSimplePerfect}, we get that $\grade(S)=\pdim S=\pdim I(S) =e(S)$. Moreover, as discussed in \cite[Section 4.4. Remark]{R}, whenever $\pdim S=\pdim I(S)$ holds and $\pdim S$ is odd, we have $\Omega^{\pdim S}(S)=\Omega^{\pdim I(S)}I(S).$ In conclusion, we have in this case $h(S)=\top \Omega^{\pdim S}(S)=\top \Omega^{\pdim I(S)}I(S).$ Just as in the $e(S)$ even case, this implies $h(S)=\phi(S)$ here as well. 
\end{proof}
\begin{remark}
Note that because $\grade(S)=\pdim I(S)=e(S)$ in an Auslander-Gorenstein algebra by \Cref{AGcondition} (and similarly, $\cograde(S)=\idim P(S)=e^*(S))$, the property that $e(S)=e^*(h(S))$, which was found by Ringel, is equivalent to $\grade(S)=\cograde(\phi(S)),$ which is a general property of Auslander-Gorenstein algebras by results of Iyama \cite{I}.
\end{remark}

\begin{remark}
    We would like to emphasise that our result does not recover Ringel's bijection in its full generality. While $h$ defines a bijection for any Nakayama algebra, the grade bijection is only defined for the ones which are Auslander-Gorenstein.
\end{remark}

We have shown in \Cref{thm::del(S)=e(S)} that in a general Nakayama algebra, $e(S)= \del (S)$ for every simple module $S$ and for an Auslander-Gorenstein Nakayama algebra we have $e(S)=\grade(S)$, and thus, $\del(S)=\grade(S)$. The next proposition shows that the equality $\del(S)=\grade(S)$ for simple modules $S$ holds for general Auslander-Gorenstein algebras:

\begin{theorem}
\label{thm:delS=gradeS}
Let $A$ be an Auslander-Gorenstein algebra and $S$ a simple $A$-module. Then $\del (S)=\grade(S).$
\end{theorem}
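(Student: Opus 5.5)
We prove the two inequalities $\del(S)\ge \grade(S)$ and $\del(S)\le \grade(S)$ separately. Throughout, set $g:=\grade(S)$. By Theorem \ref{AGcondition}, $g=\pdim I(S)<\infty$.

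For the lower bound we use Lemma \ref{lem:dell_lowerbound} with $N=A_A$. Since $A$ is Auslander-Gorenstein, $\idim A_A<\infty$. If $g=0$ there is nothing to prove. If $g\ge 1$, then by definition $\Ext^g_A(S,A)\ne 0$, but the lemma needs a module of injective dimension exactly $g$. We obtain one by taking $N=\Omega^{-(d-g)}(A)$, where $d=\idim A_A$ (we may assume $d\ge g$, since $\Ext^g(S,A)\neq 0$). Dimension shifting gives $\Ext^g(S,N)\cong \Ext^{d}(S,A)$, which might vanish, so this choice is problematic. The better choice is to truncate the minimal injective coresolution of $A$ after degree $g$. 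Let $N:=\Omega^{-?}$ be replaced by the complex-level construction $N:=\ker(I^{g}\to I^{g+1})$-type modules; concretely, since $\Ext^g(S,A)\neq 0$, Lemma \ref{Bensonlemma} says $S\subseteq I^g$. Consider instead an indecomposable projective $P$ with $I(S)$ a summand of the $g$-th term of its minimal coresolution. Then take $N:=\Omega^{-1}$-shifts so that the coresolution of $N$ ends in degree $g$ with $I(S)$ appearing. Rather than fight this, the cleanest route: write $I^0\to\cdots\to I^{g}\to C\to 0$ with $C=\coker(I^{g-1}\to I^g)$ and let $N$ be the image $L=\Omega^{-g}$-truncation, namely the complex $0\to A\to I^0\to\cdots\to I^{g-1}\to X\to 0$ where $X$ is the injective summand of $I^g$ … I expect the honest argument is the following. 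Since $\Ext^i(S,A)=0$ for $i<g$, we have $\Hom(S,\Omega^{-i}A)=0$ for $0<i<g$ and $\Ext^g(S,A)\cong\Hom(S,\Omega^{-g}A)$ modulo the image from $I^{g}$. So instead of an injective-dimension argument, we directly verify that $\Omega^{t}(S)$ is not a $(t+1)$-th syzygy for $t<g$. A $(t+1)$-th syzygy $X$ satisfies $\Ext^i(\mathrm{Tr}\,X,A)=0$-type conditions. Equivalently, via the Auslander--Bridger theory, for $t<g$ we check that $\Ext^{t+1}(\mathrm{Tr}\Omega^t S,A)$ or a related group is nonzero. This is exactly where $\Ext^{i}(S,A)=0$ for $i<g$ and the Auslander condition on $\Ext^{g}(S,A)$ (grade of $\Ext^g(S,A)\ge g$) enter.

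For the upper bound $\del(S)\le g$, we show $\Omega^g(S)$ is a $(g+1)$-th syzygy. In an Auslander-Gorenstein algebra, a module is a $k$-th syzygy if and only if it is $k$-torsionfree, by Auslander--Reiten's results on $k$-Gorenstein algebras. Thus it suffices to show $\Ext^i(\mathrm{Tr}\,\Omega^g S,A)=0$ for $1\le i\le g+1$. These groups translate into $\Ext^{g+i}$-type groups of $S$ against $A$ shifted by the transpose sequence. Iyama's theory gives that $\Ext^g(S,A)$ has grade exactly $g$ and is the only nonvanishing Ext, since $S$ is perfect-like on the "purity" level. I expect this to be the main obstacle. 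The alternative I would try first is to use the minimal projective resolution of $S$ dualised, $P_\bullet^*$, whose cohomology is concentrated in degree $g$ in the range up to $g+1$. This would exhibit $\Omega^g S$ as the cokernel-dual piece, embedding into projectives through $g+1$ steps.
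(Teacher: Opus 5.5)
Both directions of your proposal have genuine gaps.

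\textbf{Lower bound.} You correctly see that Lemma~\ref{lem:dell_lowerbound} needs a module $N$ with $\idim N=g$ \emph{exactly} and $\Ext^g_A(S,N)\neq 0$, and that neither $A_A$ nor its cosyzygies provide one. You even name the right candidate: an indecomposable projective $P$ whose coresolution has $I(S)$ in degree $g$. But you then abandon it for an Auslander--Bridger sketch in which no group is ever shown to be nonzero, so this direction is not proved.

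The missing input is the Auslander--Reiten bijection \cite[Proposition 5.4]{AR}. In an Auslander--Gorenstein algebra, every indecomposable injective, in particular $I(S)$, is of the form $\Omega^{-\idim P}(P)$ for an indecomposable projective $P$ with $\idim P=\pdim I(S)=g$. With this, $N:=P$ itself works, with no shifting or truncation. Since $I(S)$ is the last term of the minimal injective coresolution of $P$, Lemma~\ref{Bensonlemma} gives $\Ext^g_A(S,P)\neq 0$. Then Lemma~\ref{lem:dell_lowerbound} gives $g\le \del(S)$ for $g\ge 1$; the case $g=0$ is trivial.

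\textbf{Upper bound.} Your torsionfree route rests on a false claim: simple modules over Auslander--Gorenstein algebras need not be perfect, and $\Ext^{g+1}_A(S,A)$ can be nonzero. Hence $\Hom_A(P_\bullet,A)$ need not have cohomology concentrated in degree $g$ through degree $g+1$.

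Here is a counterexample. Take the linear Nakayama algebra with Kupisch series $(3,3,3,2,2,1)$.
\begin{itemize}
\item $I(3),\dots,I(6)$ are projective.
\item The non-injective projectives have minimal coresolutions $0\to P(4)\to I(5)\to I(3)\to I(2)\to 0$ and $0\to P(6)\to I(6)\to I(5)\to I(4)\to I(2)\to I(1)\to 0$.
\item $\pdim I(2)=2$ and $\pdim I(1)=4$, so the algebra is Auslander regular.
\item $I(2)$ occurs in degree $2$ (for $P(4)$) and in degree $3$ (for $P(6)$), and nowhere earlier.
\end{itemize}
So $\grade S_2=2$, while $\Ext^3_A(S_2,A)\neq 0$; indeed $\pdim S_2=3$. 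The step you flagged as the main obstacle therefore fails as formulated.

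What you missed is that this direction is elementary and needs neither torsionfree theory nor the Auslander condition on $\Ext^g_A(S,A)$. We have $S\subseteq I(S)$ with $\pdim I(S)=g$. From $0\to S\to I(S)\to I(S)/S\to 0$ (pull back a projective cover of $I(S)/S$ and apply $\Omega^g$), one gets that $\Omega^g(S)$ agrees with $\Omega^{g+1}(I(S)/S)$ up to projective summands. Hence $\del(S)\le g$; this is \cite[Proposition 2.3]{R}, which is what the paper uses.
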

\begin{proof}
Since for Auslander-Gorenstein algebras $\grade(S)=\pdim I(S)$ for every simple module $S$ by \cite[Theorem 3.3]{KMT}, it is enough to show $\del(S)=\pdim I(S)$. 

The direction $\del(S)\le \pdim I(S)$ follows from \cite[Proposition 2.3]{R} as $S$ is a submodule of $I(S).$ 

For the other direction, we use the statement from \cite[Proposition 5.4]{AR} which establishes the existence of the Auslander-Reiten bijection. 
More precisely, it tells us that in an Auslander-Gorenstein algebra, for every simple module $S$ there exists an indecomposable projective module $P$ so that $I(S)$ is isomorphic to the last term of the minimal injective coresolution of $P.$ 
In other words, $\psi^{-1}(P)=\Omega^{-\idim P}(P)\cong I(S).$ Moreover, $\idim P =\pdim I(S).$ Then we have $\Ext^{\idim P}(S,P)\neq 0$ by Lemma \ref{Bensonlemma}, so we can use Lemma \ref{lem:dell_lowerbound} to obtain the inequality $\pdim I(S)=\idim P\le \del(S).$
\end{proof}

In Proposition \ref{oddSimplePerfect}, we saw that every simple module $S$ for which $I(S)$ has odd projective dimension is perfect. 
There is a stronger property for simple modules $S$ called \textcolor{blue}{\emph{$n$-regularity}}, which is defined by the condition that $S$ satisfies $n=\pdim S=\grade (S)$ and $\dim \Ext_A^n(S,A)=1$.
This property plays an important role in commutative algebra, and especially for $n=2$ in the classification of exact structures. 
We refer, for example, to \cite{En} for more information. 
For Nakayama algebras, 2-regular simple modules were classified in \cite{MRS}.
We pose the following conjecture, which would imply even stronger homological properties for simple modules with odd grade in Auslander-Gorenstein Nakayama algebras:

\begin{conjecture}
Let $A$ be an Auslander-Gorenstein Nakayama algebra. Then $\dim \Ext_A^i(S,A) \leq 1$ for $S$ simple and odd $i$. 
\end{conjecture}

Combined with Proposition \ref{oddSimplePerfect}, the truth of this conjecture would imply that every simple module $S$ with odd grade $g$ is $g$-regular in an Auslander-Gorenstein Nakayama algebra.
In forthcoming work, we will study the classification of Auslander regular linear Nakayama algebras from a enumerative viewpoint and establish several classification results for some special cases.
We mention that the classification for special subclasses such as dominant Auslander regular algebras and higher Auslander algebras is a recent field of study, see for example \cite{R2}, \cite{Sh2} and \cite{STZ}.

\section*{Acknowledgement}
This project profited from the use of \cite{Sage} and the GAP-package \cite{QPA}.

\end{document}

%% file: inj.tex
\begin{tikzpicture}[line width=0.8pt, font=\LARGE] 
   
    \node (B) at  (4.5,1.3) {$I(t(q))=$};

    \node (A) at (9, 0) {$t(q)$};
    \node (D) at (8, 1) {};
    \node (E) at (10, 1) {};   
    \node (G) at (7, 2) {};
    \node (H) at (6, 3) {};
    \node (J) at (11, 2) {};
    \node (K) at (12, 3) {};
    \node (EJ) at (10.8,1.3) {\textcolor{purple}{$i_2$}};
    \node (DG) at (7.2,1.3) {\textcolor{cyan}{$i_1$}};

    \draw[->, color=purple] (E) to (A);
    \draw[->, color=cyan] (D) to (A);
    \draw[->, color=cyan] (G) to (D);
    \draw[->, color=cyan] (H) to (G);
    \draw[->, color=purple] (K) to (J);
    \draw[->,color=purple] (J) to (E);

     \draw [ decorate,decoration={brace,amplitude=5pt,mirror,raise=6pt},yshift=0pt]
     (K) -- (E) node [midway,xshift=-0.6cm,yshift=0.5cm] {\textcolor{black}{$ i_2^+$}};

    \draw [ decorate,decoration={brace,amplitude=5pt,mirror,raise=6pt},yshift=0pt]
     (A) -- (G) node [midway,xshift=0.5cm,yshift=0.5cm] 
     {$q$};

\end{tikzpicture}

%% file: inj_resol.tex
\begin{tikzpicture}[line width=0.8pt, font=\LARGE] 

    \node (A4) at (9, 0) {$i$};
    \node (D4) at (8, 1) {};
    \node (E4) at (10, 1) {};   
    \node (G4) at (7, 2) {};
    \node (H4) at (6, 3) {};
    \node (J4) at (11, 2) {};
    \node (K4) at (12, 3) {};


    \node (A1) at (-5, 0) {$i$};
    \node (B1) at (-6, -1) {};
    \node (C1) at (-4, -1) {};
    \node (I1) at (-7, -2) {$t_1$};
    \node (L1) at (-3, -2) {$t_2$};

    \node (A2) at (0, 0) {$i$};
    \node (B2) at (-1, -1) {};
    \node (C3) at (6, -1) {};
    \node (D3) at (4, 1) {};
    \node (E2) at (1, 1) {};
    \node (I2) at (-2, -2) {$t_1$};    
    \node (G3) at (3, 2) {};
    \node (J2) at (2, 2) {};
    \node (L3) at (7, -2) {$t_2$};
    \node (A3) at (5, 0) {$i$};

    \node (X) at (-8, 0) {$0$};
    \node (X01) at (-7, 0) {};
    \node (Y01) at (-6, 0) {};
    \node (X23) at (2.5, 0) {$\oplus$};
    \node (X12) at (-3.2, 0) {};
    \node (Y12) at (-1.7, 0) {};
    \node (X34) at (6.2, 0) {};
    \node (Y34) at (7.7, 0) {};
    \draw[->] (X01) to (Y01) {};
    \draw[->] (X12) to (Y12) {};
    \draw[->] (X34) to (Y34) {};
    
    \node (ZS) at (12.5, 0) {$0$};
    \node (ZS1) at (10.5, 0) {};
    \node (ZS2) at (11.5, 0) {};
    \draw[->] (ZS1) to (ZS2);

    \draw[->, purple] (E4) to (A4);
    \draw[->, cyan] (D4) to (A4);
    \draw[->, cyan] (G4) to (D4);
    \draw[->, purple] (J4) to (E4){};

    \draw[->, blue] (A1) to (B1);
    \draw[->, orange] (A1) to (C1);
    \draw[->, blue] (B1) to (I1)node[midway,xshift=-6.3 cm,yshift=-0.8cm ] {$ p_1$};
    \draw[->, orange] (C1) to (L1) node[midway,xshift=-3.7 cm,yshift=-0.8cm ] {$ p_2$};

    \draw[->, blue] (A2) to (B2);
    \draw[->, orange] (A3) to (C3);
    \draw[->, blue] (B2) to (I2) node[midway,xshift=-1.3 cm,yshift=-0.8cm ] {$ p_1$};
    \draw[->, orange] (C3) to (L3) node[midway,xshift=6.3 cm,yshift=-0.8cm ] {$ p_2$};
    \draw[->, purple] (J2) to (E2);
    \draw[->, purple] (E2) to (A2);
    \draw[->, cyan] (G3) to (D3);
    \draw[->, cyan] (D3) to (A3);

    \node at (0.7,1.3) {\textcolor{purple}{$ i_1$}}; 
    \node at (10.4,0.8) {\textcolor{purple}{$ i_1$}};
    \node at (4.3,1.3) {\textcolor{cyan}{$ i_2$}};
    \node at (7.6,0.8) {\textcolor{cyan}{$ i_2$}};

\end{tikzpicture}

%% file: fig_resolutions.tex
\begin{tikzpicture}[scale=0.5]

\def\xmax{33}
\def\ymax{29}


\draw[->, line width=1 pt] (32,14) -- (32,0.5);
\node at (31.5,13.5) {1};
\node at (31.5,12.5) {2};
\node at (31.5,10.5) {$i$};
\node at (31.2,9.5) {$i+1$};
\node at (31.5,1.5) {$n$};
\node at (31.5,11.5) {$\cdot$};
\node at (31.5,11.75) {$\cdot$};
\node at (31.5,11.25) {$\cdot$};

\node at (31.5,5.5) {$\cdot$};
\node at (31.5,5.75) {$\cdot$};
\node at (31.5,5.25) {$\cdot$};

\node at (32.5,13.5) {$\mathbb{Z}$};


\draw[->, line width=1 pt] (32,28.5) -- (32,14.5);
\node at (31.5,27.5) {1};
\node at (31.5,26.5) {2};
\node at (31.5,24.5) {$i$};
\node at (31.2,23.5) {$i+1$};
\node at (31.5,15.5) {$n$};
\node at (31.5,25.5) {$\cdot$};
\node at (31.5,25.75) {$\cdot$};
\node at (31.5,25.25) {$\cdot$};

\node at (31.5,19.5) {$\cdot$};
\node at (31.5,19.75) {$\cdot$};
\node at (31.5,19.25) {$\cdot$};

\node at (32.5,28) {$\mathbb{Z}$};





\fill[gray!30] (28,24) rectangle (29,25);
\node at (28.5,24.5) {$i$};
\node at (28.5,25.5) {$S_i$};

\fill[purple!30] (25,23) rectangle (26,25);
\node[purple] at (25.5,25.5) {$P_0$};
\fill[orange!30] (22,22) rectangle (23,24);
\node[orange] at (22.5,24.5) {$P_1$};
\fill[purple!30] (19,20) rectangle (20,23);
\node[purple] at (19.5,23.5) {$P_2$};
\fill[orange!30] (16,19) rectangle (17,22);
\fill[purple!30] (13,18) rectangle (14,20);
\node[purple] at (13.5,21) {$P_{e(S_i)-3}$};
\fill[orange!30] (10,17) rectangle (11,19);
\node[orange] at (10.5,20) {$P_{e(S_i)-2}$};
\fill[purple!30] (7,15) rectangle (8,18);
\node[purple] at (7.5,19) {$P_{e(S_i)-1}$};
\fill[orange!30] (4,15) rectangle (5,17);
\node[orange] at (4.5,17.6) {$P_{e(S_i)}$};


 \draw[ thick, ForestGreen] (8.5,17) rectangle (9.5,18.5);
\fill[ForestGreen!20, dotted] (8.5,17) rectangle (9.5,18.5);
\node at (9,16.5) {$M$};

\draw[ thick, ForestGreen] (11.5,17) rectangle (12.5,19.5);
 \fill[ForestGreen!20, dotted] (11.5,17) rectangle (12.5,19.5);
 \node at (12,16.5) {\textcolor{ForestGreen}{$I^0$}};

\node at (16.5,22.5) {$\ldots$};

\draw[ thick, ForestGreen] (17.5,19.5) rectangle (18.5,22.5);
\fill[ForestGreen!20, dotted] (17.5,19.5) rectangle (18.5,22.5);
\node at (18.7,19) {\textcolor{ForestGreen}{$I^{e(S_i)-5}$}};

\draw[ thick, ForestGreen] (23.5,22.5) rectangle (24.5,24);
\fill[ForestGreen!20, dotted] (23.5,22.5) rectangle (24.5,24);
\node at (24.5,22) {\textcolor{ForestGreen}{$I^{e(S_i)-3}$}};


\node at (1.5,16) {$0$};
\draw[->, line width=0.8 pt] (2.2,16) -- (3.5,16);
\draw[->, line width=0.8 pt] (5.2,16.5) -- (6.8,16.5);
\draw[->, line width=0.8 pt] (8.2,17.5) -- (9.8,17.5);
\draw[->, line width=0.8 pt] (11.2,18.5) -- (12.8,18.5);
\draw[->, line width=0.8 pt] (14.2,19.5) -- (15.8,19.5);
\draw[->, line width=0.8 pt] (17.2,21.5) -- (18.8,21.5);
\draw[->, line width=0.8 pt] (20.2,22.5) -- (21.8,22.5);
\draw[->, line width=0.8 pt] (23.2,23.5) -- (24.8,23.5);
\draw[->, line width=0.8 pt] (26.2,24.5) -- (27.8,24.5);


\fill[gray!30] (28,10) rectangle (29,12);
\node at (28.5,10.5) {$i$};
\node at (28.5,12.5) {$I(i)$};

\fill[blue!30] (25,9) rectangle (26,12);
\node[blue] at (25.5,12.5) {$R_0$};
\fill[orange!30] (22,8) rectangle (23,10);
\node[orange] at (22.5,10.5) {$R_1$};
\fill[blue!30] (19,7) rectangle (20,9);
\node[blue] at (19.5,9.5) {$R_2$};
\fill[orange!30] (16,5) rectangle (17,8);
\fill[blue!30] (13,4) rectangle (14,7);
\node[blue] at (13.5,8) {$R_{e(S_i)-3}$};
\fill[orange!30] (10,3) rectangle (11,5);
\node[orange] at (10.5,5.6) {$R_{e(S_i)-2}$};
\fill[blue!30] (7,2) rectangle (8,4);
\node[blue] at (7.5,5) {$R_{e(S_i)-1}$};
\fill[orange!30] (4,1) rectangle (5,3);
\node[orange] at (4.5,3.6) {$R_{e(S_i)}$};


\draw[ thick, ForestGreen ] (8.5,3) rectangle (9.5,4.5);
\fill[ForestGreen!20, dotted] (8.5,3) rectangle (9.5,4.5);
\node at (9,2.5) {$M$};

\draw[ thick, ForestGreen ] (14.5,4.5) rectangle (15.5,7.5);
\fill[ForestGreen!20, dotted] (14.5,4.5) rectangle (15.5,7.5);
\node at (15,4) {\textcolor{ForestGreen}{$I^1$}};

\node at (16.5,8.5) {$\ldots$};

\draw[ thick, ForestGreen] (20.5,7.5) rectangle (21.5,9.5);
\fill[ForestGreen!20, dotted] (20.5,7.5) rectangle (21.5,9.5);
\node at (21.5,7) {\textcolor{ForestGreen}{$I^{e(S_i)-4}$}};

\draw[ thick, ForestGreen] (26.5,9.5) rectangle (27.5,12);
\fill[ForestGreen!20, dotted] (26.5,9.5) rectangle (27.5,12);
\node at (27.5,9) {\textcolor{ForestGreen}{$I^{e(S_i)-2}$}};


\node at (1.5,1.5) {$\ldots$};
\draw[->, line width=0.8 pt] (2.2,2) -- (3.8,2);
\draw[->, line width=0.8 pt] (5.2,2.5) -- (6.8,2.5);
\draw[->, line width=0.8 pt] (8.2,3.5) -- (9.8,3.5);
\draw[->, line width=0.8 pt] (11.2,4.5) -- (12.8,4.5);
\draw[->, line width=0.8 pt] (14.2,6.5) -- (15.8,6.5);
\draw[->, line width=0.8 pt] (17.2,7.5) -- (18.8,7.5);
\draw[->, line width=0.8 pt] (20.2,8.5) -- (21.8,8.5);
\draw[->, line width=0.8 pt] (23.2,9.5) -- (24.8,9.5);
\draw[->, line width=0.8 pt] (26.2,11.5) -- (27.8,11.5);

\end{tikzpicture}

%% file: mainnew.bbl
\begin{thebibliography}{MTY}




\bibitem[AB]{AB} Alperin, J.~L.; Bell, R.~B.:{\it Groups and representations}, Graduate Texts in Mathematics, 162, Springer, New York, 1995; 





\bibitem[ASS]{ASS} Assem, I.; Simson, D.; Skowronski, A.: {\it Elements of the representation theory of associative algebras. Vol. 1: Techniques of representation theory. } London Mathematical Society Student Texts 65. Cambridge: Cambridge University Press, 458 p. (2006). 

\bibitem[ARS]{ARS} Auslander, M.; Reiten, I.; Smalo, S. O.: {\it Representation theory of Artin algebras.} Cambridge Studies in Advanced Mathematics, Volume 36, Cambridge University Press, 1997.  

\bibitem[AR]{AR} Auslander, M., Reiten, I.: {\it k-Gorenstein algebras and syzygy modules.} J. Pure Appl. Algebra, 92, 1-27 (1994).


\bibitem[B]{B} Bass, H.: {\it On the ubiquity of Gorenstein rings.} Math. Z. 82, 8-28 (1963).


\bibitem[Ben]{Ben} Benson, D.: {\it Representations and cohomology I: Basic representation theory of finite groups and associative algebras.} Cambridge Studies in Advanced Mathematics, Volume 30, Cambridge University Press, 1991.

\bibitem[BCK]{BCK} Bevan, D.; Cheon, G.; Kitaev, S: {\it On naturally labelled posets and permutations avoiding 12-34.} European Journal of CombinatoricsVolume 126, May 2025.

\bibitem[Bj]{Bj} Bj\"ork, J.: {\it Analytic D-modules and applications. }  Mathematics and its Applications (Dordrecht). 247. Dordrecht: Kluwer Academic Publishers. xiii, 581 p. (1993). 

\bibitem[BR]{BR} Butler, M. C. R.; Ringel, C.M.: \textit{Auslander-Reiten sequences with few middle terms and applications to string algebras.} Comm. Algebra, 15(1-2):145-179, 1987.

\bibitem[Bru]{Bru} Brualdi, R.: {\it Some combinatorially defined matrix classes. } Encinas, Andres M. (ed.) et al., Combinatorial matrix theory. Cham: Birkh\"auser. Adv. Courses in Math., CRM Barcelona, 1-46 (2018).

\bibitem[BH]{BH} Bruns, W.; Herzog, J.: {\it Cohen-Macaulay rings. Rev. ed.} Cambridge Studies in Advanced Mathematics. 39. Cambridge: Cambridge University Press. (1998). 

\bibitem[Bu]{Bu} Bump, D.: {\it Lie groups.} Graduate Texts in Mathematics 225. New York, NY: Springer. (2013). 

\bibitem[CB]{CB} Crawley-Boevey, W. W.: \textit{Maps between representations of zero-relation algebras.} J. Algebra, 126(2):259-263, 1989.


\bibitem[DJMSSTW]{DJMSSTW} Defant, C.; Jiang, Y.; Marczinzik, R.; Segovia, A.; Speyer, D. E.; Thomas, H.; Williams, N.: {\it Rowmotion and Echelonmotion.} https://arxiv.org/abs/2507.18230


\bibitem[En]{En} Enomoto, H.: {\it Classifications of exact structures and Cohen-Macaulay-finite algebras.} Advances in Mathematics
Volume 335, 7 September 2018, Pages 838-877.
 


\bibitem[G]{G} G\'elinas, V.: {\it The depth, the delooping level and the finitistic dimension.} Adv. Math. 394, 34 p. (2022). 

\bibitem[Gu]{Gu} Gustafson, W. H.: {\it Global dimension in serial rings.} J. Algebra 97, 14-16 (1985).



\bibitem[I]{I} Iyama, O.: {\it Symmetry and duality on $n$-Gorenstein rings}, Journal of Algebra, 269 (2003), no. 2, 528-535.



\bibitem[IM]{IM} Iyama, O.; Marczinzik, R.: {\it Distributive lattices and Auslander regular algebras.} Advances in Mathematics
Volume 398, 2022.

\bibitem[Kl{\'a}]{Kla} Kl\'asz, V.: {\it The Auslander-Gorenstein condition for monomial algebras.} https://arxiv.org/abs/2508.06957.

\bibitem[KM]{KM}
Kl\'asz, V.; Marczinzik, R.: {\it A survey on Auslander-Gorenstein algebras}, https://arxiv.org/abs/2508.19079

\bibitem[KMMRS]{KMMRS} Kl\'asz, V.; Marczinzik, R.; Mellit, A.; Rubey, M.; Stump, R.: {\it On the global dimension of Nakayama algebras.} https://arxiv.org/abs/2503.18415.

\bibitem[KMT]{KMT} Kl\'asz, V.; Marczinzik, R.; Thomas, H.: {\it Auslander regular algebras and Coxeter matrices.} https://arxiv.org/abs/2501.09447

\bibitem[KMM]{KMM} Ko, H.; Mazorchuk, V.; Mrden, R.: {\it Some homological properties of category $\mathcal{O}$. VI.} Doc. Math. 26, 1237-1269 (2021).

\bibitem[LP]{LP} Lenzing, H.; de la Pena, J. A.:{\it Spectral analysis of finite dimensional algebras and singularities.} Trends in representation theory of algebras and related topics. Proceedings of the 12th international conference on representations
of algebras and workshop. EMS Series of Congress Reports (2008), pp. 541-588.

\bibitem[Mad]{Mad} Madsen, D.: {\it Projective dimensions and Nakayama algebras.} Fields Institute Communications. 45. Amer. Math. Soc., Providence, RI, 2005. 247-265.

\bibitem[MRS]{MRS} Marczinzik, R.; Rubey, M.; Stump, C.: {\it A combinatorial classification of 2-regular simple modules for Nakayama algebras.} J. Pure Appl. Algebra 225, No. 3, 34 p. (2021). 




\bibitem[OOV]{OOV} Odeh, O. H.; Olesky, D. D.; van den Driessche, P.: {\it Bruhat decomposition and numerical stability.} SIAM J. Matrix Anal. Appl. 19, No. 1, 89-98 (1998).


\bibitem[QPA]{QPA} The QPA-team, QPA - Quivers, path algebras and representations - a GAP package, Version 1.33; 2022 https:
//folk.ntnu.no/oyvinso/QPA/. 

\bibitem[R]{R} Ringel, C. M.: {\it The finitistic dimension of a Nakayama algebra.} Journal of Algebra, Volume 576, 2021, Pages 95-145, ISSN 0021-8693.

\bibitem[R2]{R2} Ringel, C. M.: {\it Linear Nakayama algebras which are higher Auslander algebras.} Commun. Algebra 50, No. 11, 4842-4881 (2022). 

\bibitem[Sage]{Sage} A. Stein et al., Sage Mathematics Software, The Sage Development Team, 2022, http:
//www.sagemath.org.

\bibitem[STZ]{STZ} Sen, E.; Todorov, G.; Zhu, S.: {\it  Defect Invariant Nakayama Algebras.} https://arxiv.org/abs/2406.00254

\bibitem[Sh]{Sh}
Shen, D.: A note on homological properties of Nakayama algebras, Arch. Math. (Basel) {\bf 108} (2017), no.~3, 251--261; MR3614703

\bibitem[Sh2]{Sh2} Shen, D.: {\it  The resolution quiver of Nakayama algebras which are minimal Auslander-Gorenstein.} https://arxiv.org/abs/2511.14180







\bibitem[VO]{VO} Van Oystaeyen, F.: {\it Algebraic geometry for associative algebras.} Pure and Applied Mathematics 232. New York, NY: Marcel Dekker. vi, 287 p. (2000).


\end{thebibliography}
